\tikzstyle{NE-lines}=[pattern=north east lines, pattern color=black!45]
\def\loongmapsto#1{%
  \begin{tikzpicture}
    \draw (0,0.5mm) -- (0,-0.5mm);
    \draw[->] (0,0) -- (1.4, 0) node[above,midway] {\scriptsize #1};
  \end{tikzpicture}
}
\newcommand{\etal}{et~al.}
\newcommand{\Ascseq}{A}                   % Ascent sequences
\newcommand{\Modasc}{\hat{\Ascseq}}       % Modified ascent sequences
\newcommand{\BurMA}{\hat{\mathcal{A}}}    % Modified ascent sequences, Burge variant
\newcommand{\F}{F}                        % Fishburn permutations
\newcommand{\xset}{X}
\newcommand{\BurF}{\mathcal{F}}           % Fishburn permutations, Burge variant
\newcommand{\Cay}{\mathrm{Cay}}           % The set of Cayley permutations
\newcommand{\BurCay}{\mathcal{C\kern-0.9pt ay}} % The set of Cayley permutations, Burge variant
\newcommand{\Bur}{\mathrm{Bur}}           % The set of Burge words
\newcommand{\Sym}{S}                      % The symmetric group
\newcommand{\RGF}{\mathrm{RGF}}           % Restricted Growth Tables
\newcommand{\WI}{I}                       % Weakly increasing sequences
\newcommand{\Asc}{\textrm{Asc}}           % List of ascents
\newcommand{\Des}{D}                      % Set of descents
\newcommand{\asc}{\mathrm{asc}}           % Number of ascents
\newcommand{\des}{\mathrm{des}}           % Number of descents
\newcommand{\fun}{\varphi}                % The map \A -> \F
\newcommand{\hatfun}{\psi}                % The map \MA -> \F
\newcommand{\Mat}{M}                      % Matrices
\newcommand{\identity}{\mathrm{id}}       % Increasing pattern
\newcommand{\fishpattern}{\mathfrak{f}}   % Fishburn pattern
\newcommand{\basis}{B}                    % Modified Fishburn basis
\newcommand{\cbasis}{C}                   % Classic Fishburn basis
\newcommand{\sort}{\mathrm{sort}}         % sorted word
\newcommand{\asctops}{\mathrm{top}}       % ascent tops + first element
\newcommand{\nub}{\mathrm{nub}}           % first occurrence of each element
\newcommand{\hatx}{\hat{x}}
\newcommand{\nyip}{\hspace*{0.425em}}
\mathchardef\mhyphen="2D
\newcommand{\dashpatt}{23\mhyphen 1}
\newcommand{\rgfpatt}{\mathfrak{g}}
\newcommand{\Flat}{\mathrm{Flat}}      % Flattened partitions
\newtheorem{theorem}{Theorem}[section]
\newtheorem{theorem*}{Theorem}[section]
\newtheorem{prop}[theorem]{Proposition}
\newtheorem{lemma}[theorem]{Lemma}
\newtheorem{corollary}[theorem]{Corollary}
\newtheorem{openproblem}[theorem]{Open Problem}
\newtheorem*{openproblem*}{Open Problem}
\theoremstyle{definition}
\newtheorem{defin}[theorem]{Definition}
\newtheorem{remark}[theorem]{Remark}
\newtheorem*{remark*}{Remark}
\newtheorem*{example*}{Example}
\title{Transport of patterns by Burge transpose}
\author{Giulio Cerbai\footnote{The author is a member of the INdAM
    Research group GNCS; he is partially supported by the INdAM - GNCS 2020
    project ``Combinatoria delle permutazioni, delle parole e dei grafi:
    algoritmi e applicazioni".} \and Anders
    Claesson\footnote{This material is based upon work supported by the
    Swedish Research Council under grant no. 2016-06596 while the author
    was in residence at Institut Mittag-Leffler in Djursholm, Sweden
    during Spring 2020.}}
\begin{document}
\maketitle

\begin{abstract}
  We take the first steps in developing a theory of transport of
  patterns from Fishburn permutations to (modified) ascent
  sequences. Given a set of pattern avoiding Fishburn permutations, we
  provide an explicit construction for the basis of the corresponding
  set of modified ascent sequences.  Our approach is in fact more
  general and can transport patterns between permutations and
  equivalence classes of so called Cayley permutations.  This transport
  of patterns relies on a simple operation we call the Burge
  transpose. It operates on certain biwords called Burge words. Moreover,
  using mesh patterns on Cayley permutations, we present an alternative
  view of the transport of patterns as a Wilf-equivalence between
  subsets of Cayley permutations. We also highlight a connection
  with primitive ascent sequences.\smallskip

  \noindent \textit{Keywords:} Fishburn permutation, Cayley permutation,
  Burge word, transpose, ascent sequence, pattern avoidance.
\end{abstract}

\section{Introduction}
\thispagestyle{empty}

In 2010 Bousquet-Mélou, Claesson, Dukes and Kitaev~\cite{BMCDK}
introduced ascent sequences, which they used as an auxiliary set of
objects that most transparently embodies the recursive structure that
they discovered on $(2+2)$-free posets, Stoimenow's matchings and a set
of pattern avoiding permutations, now called Fishburn permutations. All
of these objects are enumerated by the Fishburn numbers, which is
sequence A022493 in the OEIS~\cite{Sl}. This counting sequence has a
beautiful generating function~\cite{Sl,Za}:
$$\sum_{n\geq 0}\prod_{k=1}^n\bigl(1-(1-x)^k\bigr)
= 1 + x + 2x^2 + 5x^3 + 15x^4 + 53x^5 + 217x^6 + \cdots
$$
Since then, ascent sequences have been studied in
their own right. In particular, pattern avoiding ascent sequences have
been quite thoroughly investigated~\cite{BP,CMS,DS,MS14,MS15}. The study
of pattern avoidance on ascent sequences has proved itself to often be
even more intricate than its analogue on permutations and a framework
capable of producing general results is missing.

Recently, Gil and Weiner~\cite{GW} studied pattern avoidance on Fishburn
permutations.  The main purpose of this work is to initiate the
development of a theory of transport of patterns from Fishburn
permutations to ascent sequences, and vice versa, aiming towards a more
general understanding of pattern avoidance. Instead of ascent sequences
we use their modified version~\cite{BMCDK}. The main benefit is that
permutations as well as modified ascent sequences are Cayley
permutations, and they provide a natural setting for the transport of
patterns. The necessary background on Cayley permutations and pattern
avoidance is given in Section~\ref{Section_preliminaries}.

In Section~\ref{Section_Burge_words} we introduce the Burge transpose of
biwords. This operation provides a high-level description of a bijection
$\psi$ between modified ascent sequences and Fishburn permutations originally
given by Bousquet-Mélou~\etal~\cite{BMCDK}. In
Section~\ref{Section_transport_theorem} we use the Burge transpose to
define an equivalence relation on Cayley permutations and to equip its
equivalence classes with a notion of pattern avoidance. The avoidance of
a pattern on the quotient set is transported by Burge transposition to
classical pattern avoidance on permutations, thus yielding a
general result on the transport of patterns. This machinery can be specialized
by suitably choosing representatives for the equivalence classes. The most
striking example is a transport theorem for Fishburn
permutations and modified ascent sequences, which we
prove in Section~\ref{Section_transport_F_Modasc}:
\textit{Given a pattern $\sigma$ we describe an explicit construction of
a set of patterns $B(\sigma)$ such that Fishburn permutations avoiding
$\sigma$ are in one-to-one correspondence, via the Burge transpose, with
modified ascent sequences avoiding all of the patterns in $B(\sigma)$.}

In Section~\ref{Section_Examples}, the same construction will be extensively used
to derive a number of structural and enumerative results that link pattern
avoiding (modified) ascent sequences to the corresponding Fishburn permutations.
Table~\ref{table_examples_transport} contains several examples that
illustrate this approach. As a corollary of the same framework, we also
obtain a transport theorem for restricted growth functions and permutations
avoiding the vincular pattern $\dashpatt$.

In Section~\ref{Section_lift_of_psi} we ``lift'' the mapping
$\psi^{-1}$, whose domain is the set of Fishburn permutations, to a
new mapping $\eta$ whose domains is $\Sym$, the set of all
permutations. The map $\eta$ encodes what we call the $\eta$-active
sites of a permutation. In particular, $\eta$ preserves the property of
transporting patterns, thus generalizing the transport theorem for Fishburn
permutations. We then characterize the image set $\eta(\Sym)$
in terms of mesh patterns on Cayley permutations. This further allows us
to characterize modified ascent sequences as pattern avoiding Cayley
permutations, a noteworthy consequence of which is that the transport of
patterns can be regarded as a theory of Wilf-equivalence on Cayley
permutations. We close Section~\ref{Section_lift_of_psi} by studying
the set $\eta(\Sym)\cap\Sym$. This set can be described as the image
under $\eta$ of the set of permutations in which all sites are
$\eta$-active, which in turn is shown to be in bijection with primitive
ascent sequences.

In Section~\ref{Section_future_works} we raise some natural
questions, leaving two of them as open problems.

\section{Preliminaries}\label{Section_preliminaries}
\subsection{Cayley permutations and pattern avoidance}\label{Section_Cayley_perms}

A word consisting of positive integers that include at least one copy of
each integer between one and its maximum value is called a \emph{Cayley
permutation}~\cite{Ca, MF84}. We will denote by $\Cay_n$ the set of Cayley
permutations of length $n$,
and by $\Cay=\cup_{n\geq 0}\,\Cay_n$ the set of Cayley permutations
of any finite length.
For example, $\Cay_1=\{1\}$,
$\Cay_2 = \{11, 12, 21\}$ and
$$
\Cay_3 = \{111 ,112 ,121 ,122 ,123 ,132 ,211 ,212 ,213 ,221 ,231 ,312 ,321\}.
$$
Equivalently, a word $x_1x_2\dots x_n$ belongs to $\Cay_n$ precisely when
there is an endofunction $x:[n]\to [n]$ such that $\mathrm{Im}(x) = [k]$ for
some $k\leq n$ and $x(i)=x_i$ for each $i$ in $[n]$. We can also view
$x$ as encoding a ballot (ordered set partition) with blocks
$B_1B_2\dots B_k$ such that $i\in B_{x(i)}$. Thus, the cardinality of
$\Cay_n$ is the $n$th Fubini number, which is sequence A000670 in the
OEIS~\cite{Sl}.

A bijective endofunction $\pi:[n]\to [n]$ is called a \emph{permutation}
and $n$ is said to be the length of $\pi$.  We shall sometimes write
permutations in so called one-line notation and thus identify $\pi$ with
its list of images $\pi(1)\pi(2)\cdots \pi(n)$. We will denote by
$\identity_n$ the identity permutation, $\identity_n(i)=i$, in $S_n$.
In fact, we shall often just write $\identity$ (without the subscript)
and let $n$ be inferred by context. Denote by $\Sym_n$ the
set of permutations of length $n$ and by $\Sym=\cup_{n\geq 0}\,\Sym_n$
the set of permutations of any finite length. Note that
$\Sym \subseteq \Cay$.

Given two Cayley permutations $x$ and $y$, we say that $y$ is a
\emph{pattern} in $x$ if $x$ contains a subsequence
$x(i_1)x(i_2)\cdots x(i_k)$, with $i_1 \le i_2 \le \cdots \le i_k$,
which is order isomorphic to $y$; that is, $x(i_s) < x(i_t)$ if and only
if $y(s) < y(t)$ and $x(i_s) = x(i_t)$ if and only if $y(s) = y(t)$.
In this case we write $y\leq x$ and $x(i_1)x(i_2)\cdots x(i_k)\simeq y$; the subsequence $x(i_1)x(i_2)\cdots x(i_k)$ is called an
\emph{occurrence} of $y$ in $x$. Otherwise, $x$ \emph{avoids}
$y$. Denote by $\Cay(y)$ the set of Cayley permutations that avoid $y$
and by $\Cay_n(y)$ the set $\Cay(y) \cap \Cay_n$ of Cayley permutations
of length $n$ avoiding $y$. For example, $\Sym=\Cay(11)$ is the set of
permutations. If $B$ is a set of patterns, $\Cay(B)$ denotes the set of
Cayley permutations avoiding every pattern in $B$ and $\Cay_n(B)$
denotes $\Cay_n \cap \Cay(B)$. We use analogous notations for subsets of
$\Cay$. For instance, $\Modasc(212,312)$ denotes the set of modified
ascent sequences (defined in Section~\ref{Section_asc_seq}) avoiding the
two patterns $212$ and $312$. The containment relation is a partial
order on $\Sym$ and downsets in this poset are called \emph{permutation
  classes}. Similarly, the containment relation is a partial order on
$\Cay$ and downsets in this poset are called \emph{Cayley permutation
  classes}. The \emph{basis} of a (Cayley) permutation class is the
minimal set of (Cayley) permutations it avoids. For instance, the basis
for $S$ in $\Cay$ is $\{11\}$.  For a more detailed introduction to
permutation patterns we refer the reader to Bevan's note ``Permutation
patterns: basic definitions and notations''~\cite{Be}.

The set $\Sym$ of permutations can be equipped with more general notions
of patterns~\cite{BS,BMCDK,BC,Cl}. A \emph{bivincular
  pattern}~\cite{BMCDK} of length $k$ is a triple $(\sigma,X,Y)$, where $X$
and $Y$ are subsets of $\lbrace 0,1,\dots,k \rbrace$ and
$\sigma\in\Sym_k$. An occurrence of $(\sigma,X,Y)$ in a permutation $\pi\in\Sym_n$
is then an occurrence $\pi(i_1) \cdots \pi(i_k)$ of $\sigma$ (in the classical
sense) such that:
\begin{itemize}
\item $i_{\ell+1}=i_\ell+1$, for each $\ell \in X$;
\item $j_{\ell+1}=j_{\ell}+1$, for each $\ell \in Y$,
\end{itemize}
where
$\lbrace \pi(i_1),\dots,\pi(i_k) \rbrace = \lbrace j_1,\dots,j_k \rbrace$,
with $j_1<\cdots<j_k$; by convention, $i_0=j_0=0$ and
$i_{k+1}=j_{k+1}=n+1$. The set $X$ identifies constraints of adjacency on
the positions of the elements $\pi$, while the set $Y$, symmetrically,
identifies constraints on their values. An example of a bivincular pattern
is depicted in Figure~\ref{mesh_fishburn}. If $Y$ is empty, then 
$(\sigma,X,Y)$ is called a \emph{vincular} pattern.

By allowing more general constraints on positions and values we
arrive at mesh patterns. A \emph{mesh pattern}~\cite{BC} is a pair $(\sigma,R)$,
where $\sigma \in \Sym_k$ is a permutation (classical pattern) and
$R \subseteq \left[ 0,k \right] \times \left[ 0,k \right]$ is a set of
pairs of integers. The pairs in $R$ identify the lower left corners of
unit squares in the plot of $\pi$ which specify forbidden regions. An
occurrence of the mesh pattern $(\sigma,R)$ in the permutation $\pi$ is an
occurrence of the classical pattern $\sigma$ such that no other points of the
permutation occur in the forbidden regions specified by $R$.

Two subsets of $\Cay$ are \emph{equinumerous} if they contain the same
number of Cayley permutations of each length. Equivalently, if they have
the same generating function. Two sets of (generalized) patterns $B_1$
and $B_2$ are \emph{Wilf-equivalent} if $\Sym(B_1)$ and $\Sym(B_2)$
are equinumerous. We extend this notion to Cayley permutations by saying
that $B_1$ and $B_2$ are Wilf-equivalent (over $\Cay$) if $\Cay(B_1)$
and $\Cay(B_2)$ are equinumerous.

\subsection{Ascent sequences}\label{Section_asc_seq}

Let $x:[n]\to [n]$ be an endofunction. We call $i\in [n-1]$ an
\emph{ascent} of $x$ if $x(i) < x(i+1)$.  Let $\asc(x)$ denote
the number of ascents of $x$. Then $x$ is an \emph{ascent sequence}
of length $n$ if $x(1)=1$ and
$x(i+1) \le 2+\asc\bigl(x\circ \identity_{i,n}\bigr)$ for each
$i\in [n-1]$, where $\identity_{i,n}:[i] \to [n]$ is the inclusion
map. Let $\Ascseq_n$ be the set of ascent sequences of length $n$. For
instance, $\Ascseq_3 = \{111, 112, 121, 122, 123\}$. Note that some
ascent sequences are not Cayley permutations, the smallest example of
which is $12124$. Note also that we depart slightly from the original
definition of ascent sequences~\cite{BMCDK} in that our sequences are
one-based rather then zero-based. The reason for this is that we want to
bring all the families of sequences considered in this paper under one
umbrella, namely that of endofunctions on $[n]$.

We shall now define the set of \emph{modified ascent
sequences}~\cite{BMCDK}, denoted $\Modasc_n$. This set,
which is equinumerous with $\Ascseq_n$, has a recursive
structure that is similar to, but more complicated than, that of
$\Ascseq_n$. The definition goes as follows. There is exactly one
modified ascent sequence of length zero, namely the empty word. There is
also exactly one modified ascent sequence of unit length, namely the
single letter word $1$. Suppose $n\geq 2$. Every $x\in\Modasc_n$ is of
one of two forms depending on whether the last letter forms an ascent
with the penultimate letter:
\begin{itemize}
\item $x = \upsilon a$ \,and\, $1\leq a \leq b$, or
\item $x = \tilde{\upsilon} a$ \,and\, $b< a \leq 2+\asc(\upsilon)$,
\end{itemize}
where $\upsilon\in\Modasc_{n-1}$, the last letter of $\upsilon$ is $b$,
and $\tilde{\upsilon}$ is obtained from $\upsilon$ by increasing each
entry $c \geq a$ by one.

\begin{lemma}\label{lemma_ascent_tops}
  Let $x\in\Modasc_n$ be a modified ascent sequence. An element $x(i)=k>1$
  is the leftmost occurrence of the integer $k$ in $x$ if and only if
  $x(i-1)<x(i)$. In particular, $x$ is a Cayley permutation.
\end{lemma}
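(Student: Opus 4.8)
The plan is to prove both assertions simultaneously by induction on $n$, following the recursive description of $\Modasc_n$. The base cases $n\in\{0,1\}$ are immediate: the only elements are $\emptyword$ and $1$, which contain no letter $k>1$, so the characterization holds vacuously, and both are trivially Cayley permutations. For the inductive step I would fix $x\in\Modasc_n$ with $n\ge 2$ and write $x$ in one of its two forms, $x=\upsilon a$ (no final ascent, $1\le a\le b$) or $x=\tilde{\upsilon}a$ (final ascent, $b<a\le 2+\asc(\upsilon)$), where $\upsilon\in\Modasc_{n-1}$ has last letter $b$. Since $\upsilon(1)=1$ in either case, every letter equal to some $k>1$ sits at a position $i\ge 2$, so $x(i-1)$ is always defined and no boundary convention is needed.

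The key observation for the positions $i\le n-1$ is that passing from $\upsilon$ to $\tilde{\upsilon}$ applies the strictly increasing relabeling $c\mapsto c$ for $c<a$ and $c\mapsto c+1$ for $c\ge a$. Being strictly increasing, this map preserves all order and equality relations between letters; in particular it sends ascents to ascents and preserves, for each position, whether it is the leftmost occurrence of its value. It also fixes the value $1$ (as $a>1$) and omits the value $a$ altogether. In Case~1 the first $n-1$ letters are literally those of $\upsilon$. In both cases, appending the final letter $a$ cannot create an earlier occurrence of any value, so for $i\le n-1$ the leftmost-occurrence status of $x(i)$ in $x$ agrees with that of the corresponding letter in $\upsilon$. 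Combining these facts with the inductive hypothesis applied to $\upsilon$ yields the desired equivalence for all $i\le n-1$.

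It then remains to treat the final position $i=n$, where $x(n)=a$. In Case~1 we have $a\le b=x(n-1)$, so $x(n-1)<x(n)$ fails; and if $a>1$ then, because $\upsilon$ is a Cayley permutation by induction and $a\le b\le\max(\upsilon)$, the value $a$ already occurs in $\upsilon$, so $x(n)$ is not a leftmost occurrence --- both sides of the equivalence are false. In Case~2 we have $b<a$, so $x(n-1)<x(n)$ holds; and since the relabeling omits the value $a$, the letter $a$ does not occur among the first $n-1$ letters, so $x(n)$ is its leftmost occurrence --- both sides are true. This completes the characterization.

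Finally, for Cayley-ness I would argue within the same induction. In Case~1 the letter set of $x$ equals that of $\upsilon$, so $x$ is a Cayley permutation by the inductive hypothesis. Case~2 is the delicate point: appending $a$ must exactly fill the gap created by the relabeling, which forces $a\le\max(\upsilon)+1$. To see this I would use the characterization just proved for $\upsilon$: the leftmost occurrences of letters $>1$ are precisely the positions following an ascent, so the number of distinct values exceeding $1$ equals $\asc(\upsilon)$; since $\upsilon$ is Cayley this number is $\max(\upsilon)-1$, giving $\max(\upsilon)=1+\asc(\upsilon)$ and hence $a\le 2+\asc(\upsilon)=\max(\upsilon)+1$. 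Then $\tilde{\upsilon}$ uses every value in $\{1,\dots,\max(\upsilon)+1\}$ except $a$, and the appended $a$ restores it, so $x$ uses exactly $\{1,\dots,\max(\upsilon)+1\}$ and is a Cayley permutation. The main obstacle throughout is this Case~2 bookkeeping: tracking the relabeling and the identity $\max(\upsilon)=1+\asc(\upsilon)$ to guarantee that no value is skipped.
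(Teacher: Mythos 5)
Your proof is correct and follows essentially the same route as the paper's: induction on $n$ via the recursive definition of $\Modasc$, noting that the relabeling $\upsilon\mapsto\tilde{\upsilon}$ preserves order relations and leftmost-occurrence status, and then analysing the last letter in the two cases. The only difference is one of detail: your Case~2 bookkeeping (deriving $\max(\upsilon)=1+\asc(\upsilon)$ from the characterization to show no value is skipped) makes explicit a step that the paper compresses into the remark that $x\in\Cay$ because $\upsilon\in\Cay$.
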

\begin{proof}
  We proceed by induction on the length of $x$, using the recursive
  definition of $\Modasc$. If
  $n=0$ or $n=1$, then there is nothing to prove. Suppose $n \ge 2$ and let
  $x\in\Modasc$. Let $a = x(n)$. We have either
  \begin{itemize}
  \item $x=va$ and $1 \le a \le b$, or
  \item $x=\tilde{v}a$ and $b<a\le 2+\asc(v)$,
  \end{itemize}
  where $v \in \Modasc_{n-1}$, the last letter of $v$ is $b$ and
  $\tilde{v}$ is obtained from $v$ by increasing each entry $c \ge a$ by
  one.
  Note that in both cases $x$ is a Cayley permutation since $v\in\Cay$ by
  the inductive hypothesis.
  Moreover, the desired property holds for $v$ (again, by the inductive
  hypothesis) and it holds for $\tilde{v}$ as well since increasing each
  element greater than or equal to a certain value by one preserves it.
  Now, if $1\le a\le b$, then $v$ already contains an
  occurrence of $a$ (since $v$ is a Cayley permutation) and therefore
  $x(n)$ is not the leftmost occurrence of $a$ in $x$. Finally, if
  $b<a\le 2+\asc(v)$, then $x(n)$ is the only (and thus leftmost)
  occurrence of $a$ in $x$.
\end{proof}

By Lemma~\ref{lemma_ascent_tops}, for any modified ascent sequence $x$
the ascent tops of $x$ together with the first element,
$x(1)=1$, form a permutation of length $\max(x)$.
Consequently $\max(x)=1+\asc(x)$ and $\Modasc_n\subseteq\Cay_n$.
To see that $|\Ascseq_n|=|\Modasc_n|$ we will give a bijection
$x\mapsto \hatx$ from $\Ascseq_n$ to $\Modasc_n$. Given an ascent
sequence $x$, let
$$M(x,j) = x', \text{ where } x'(i) = x(i) +
\begin{cases}
  1 & \text{if $i<j$ and $x(i)\geq x(j+1)$,} \\
  0 & \text{otherwise,}
\end{cases}
$$
and extend the definition of $M$ to multiple indices
$j_1$, $j_2$, \dots, $j_k$ by
$$M(x,j_1,j_2,\dots, j_k) = M\bigl(M(x, j_1,\dots, j_{k-1}), j_k\bigr).$$
We let $\hatx = M\bigl(x, \Asc(x)\bigr)$, where
$\Asc(x)=\bigl(i : x(i) < x(i+1)\bigr)$ denotes the vector of ascents of
$x$. For example, if $x=121242232$, then $\Asc(x)=(1,3,4,7)$ and
we get:
\begin{align*}
  x &= 121242232 \\
  M(x,1) &= \underline{12} 1242232\\
  M(x,1,3) &= 1\mathit{3} \underline{12} 42232 \\
  M(x,1,3,4) &= 131 \underline{24} 2232 \\
  M(x,1,3,4,7) &= 1\mathit{4}12\mathit{5}2 \underline{23} 2 = \hatx
\end{align*}

The construction described above can easily be inverted and thus the
mapping $x \mapsto \hatx$ is a bijection. 
The recursive definition of $\Modasc_n$ is equivalent to saying that
$\Modasc_n$ is the image of $\Ascseq_n$ under the $x\mapsto \hatx$
mapping. Indeed, $\Modasc_n$ was originally defined~\cite{BMCDK} in this
manner.

\subsection{Fishburn permutations}\label{Section_fishburn_perm}

\begin{figure}
$$
\fishpattern \;=\;
\begin{tikzpicture}[scale=0.40, baseline=20.5pt]
\fill[NE-lines] (1,0) rectangle (2,4);
\fill[NE-lines] (0,1) rectangle (4,2);
\draw [semithick] (0.001,0.001) grid (3.999,3.999);
\filldraw (1,2) circle (6pt);
\filldraw (2,3) circle (6pt);
\filldraw (3,1) circle (6pt);
\end{tikzpicture}
$$
\caption{Bivincular pattern $\fishpattern$ characterizing
  Fishburn permutations}\label{mesh_fishburn}
\end{figure}

Define the bivincular pattern
$\fishpattern = (231, \lbrace 1 \rbrace, \lbrace 1 \rbrace)$, as in
Figure~\ref{mesh_fishburn}. Let $\F=\Sym(\fishpattern)$. The elements of $\F$
are called \emph{Fishburn permutations}. Bousquet-Mélou~\etal~\cite{BMCDK} gave
a length-preserving bijection between ascent sequences and Fishburn
permutations. More precisely, ascent sequences encode the so called
\emph{active sites} of the Fishburn permutations. The term active site
comes from the generating tree approach to enumeration. Each vertex in
such a tree corresponds to a combinatorial object and the path from the
root to a vertex encodes the choices made in the construction of the
object. Regarding Fishburn permutations, let us construct an
element of $\F_{n+1}$ by starting from an element of $\F_n$ and inserting a
new maximum in some position.  The avoidance of the pattern
$\fishpattern$ makes some of the positions forbidden, while the others
are the active sites. More precisely, let $\pi\in\F_n$ be a
Fishburn permutation. For $i \in [n]$, if $\pi(i)=1$ let $J(i)=0$,
otherwise let $J(i)$ be the index such that $\pi(J(i))=\pi(i)-1$. 
Counting from the position to the left of the first entry of $\pi$,
position $1$ is always active and position $i+1$ is active if
and only if $J(i)<i$.
In all the other cases, the insertion of $n+1$ immediately
after $\pi(i)$ would result in an occurrence $\pi(i),n+1,\pi(J(i))$
of $\fishpattern$.
Now, the empty ascent sequence
corresponds to the empty permutation. The ascent sequence corresponding
to a nonempty Fishburn permutation $\pi \in \F$ is constructed as
follows. Start from the permutation $1$ and the sequence $1$. Record the
position in which you insert the new maximum, step by step, until you get
$\pi$. To illustrate this map consider the permutation $\pi=319764825$. It is
obtained by the following insertions, where the subscripts indicate the
labels of the active sites, while positions between consecutive elements
that have no subscript are forbidden sites.

\begin{align*}
{_1} 1{_2}
&\quad\loongmapsto{$x_2=2$}\quad {_1} 1{_2} 2{_3}\\
&\quad\loongmapsto{$x_3=1$}\quad {_1} 3\nyip 1{_2} 2{_3}\\
&\quad\loongmapsto{$x_4=2$}\quad {_1} 3\nyip 1{_2} 4{_3} 2{_4}\\
&\quad\loongmapsto{$x_5=4$}\quad {_1} 3\nyip 1{_2} 4{_3} 2{_4} 5{_5}\\
&\quad\loongmapsto{$x_6=2$}\quad {_1} 3\nyip 1{_2} 6\nyip 4{_3} 2{_4} 5{_5}\\
&\quad\loongmapsto{$x_7=2$}\quad {_1} 3\nyip 1{_2} 7\nyip 6\nyip 4{_3} 2{_4} 5{_5}\\
&\quad\loongmapsto{$x_8=3$}\quad {_1} 3\nyip 1{_2} 7\nyip 6\nyip 4{_3} 8{_4} 2{_5} 5{_6}\\
&\quad\loongmapsto{$x_8=2$}\quad {_1} 3\nyip 1{_2} 9\nyip 7\nyip 6\nyip 4{_3} 8{_4} 2{_5} 5{_6}.
\end{align*}

Therefore the ascent sequence corresponding to $\pi$ is
$x=121242232$. This procedure can also be viewed as constructing $\pi$
from a given ascent sequence by successive insertions of a new maximum
in the active site specified by the ascent sequence. Throughout this
paper we will denote this mapping from ascent sequences to Fishburn
permutations by $\fun$, so that $\fun(x)=\pi$. For a proof that
$\fun: \Ascseq\to\F$ is a bijection the interested reader is again
referred to Bousquet-Mélou~\etal~\cite{BMCDK}.

Next we recall (from~\cite{BMCDK}) the construction of a map
$\hatfun : \Modasc \to \F$ such that $\hatfun (\hatx)= \fun(x)$ for each
ascent sequence $x$. It will play a central role in transporting
patterns from ascent sequences to Fishburn permutations. As we will see,
$\hatfun$ is much easier to handle than $\fun$. The relation between the
bijections $x \mapsto \hatx$, $\fun$ and $\protect\hatfun$ is
illustrated by the commutative diagram in Figure~\ref{hat_diagram}.

\begin{figure}
$$
\begin{tikzpicture}
  \matrix (m) [matrix of math nodes,row sep=3.5em,column sep=7em,minimum width=2em]
  {
    \Ascseq & \F  \\
    \Modasc &     \\
  };
  \path[-stealth, semithick]
  (m-1-1) edge node [above, yshift=2pt] {$\fun$}
  (m-1-2)
  (m-1-1) edge node [below, xshift=-22pt, yshift=10pt]{$x \mapsto \hatx$}
  (m-2-1)
  (m-2-1) edge node [right, xshift=-3pt, yshift=-10pt] {$\hatfun$} (m-1-2);
\end{tikzpicture}
$$
\caption{How the bijections $x \mapsto \hatx$, $\fun$ and
  $\protect\hatfun$ are related}\label{hat_diagram}
\end{figure}

Let $\hatx$ be a modified ascent
sequence. Write the integers $1$ through $n$ below it, and sort the
pairs $\binom{\hatx(i)}{i}$ in ascending order with respect to the top
entry, breaking ties by sorting in descending order with respect to the
bottom entry. The resulting bottom row is the permutation
$\hatfun(\hatx)$. For example, with $\hatx=141252232$, the modified
sequence of $x=121242232$, we have
$$
\binom{\hatx}{\identity}=
\binom{1\ 4\ 1\ 2\ 5\ 2\ 2\ 3\ 2}{1\ 2\ 3\ 4\ 5\ 6\ 7\ 8\ 9}
\;\longmapsto\;
\binom{1\ 1\ 2\ 2\ 2\ 2\ 3\ 4\ 5}{3\ 1\ 9\ 7\ 6\ 4\ 8\ 2\ 5}=
\binom{\upsilon(\pi)}{\pi}
$$
To reverse this process, annotate a given Fishburn permutation
$\pi$ with its active sites as in
$\pi= {_1}31{_2}9764{_3}8{_4}2{_5}5{_6}$. Write $k$ above all entries
$\pi(j)$ that lie between active sites $k$ and $k+1$. In the example, this
forms the word $\upsilon(\pi)$ above $\pi$. Then sort the pairs
$\binom{k}{\pi(j)}$ in ascending order with respect to the bottom entry.
This defines $\hatfun^{-1}$, the inverse of the map $\hatfun$.

It turns out that it is more natural to place the identity permutation above
$\hatx$, rather than below it. Then
$$\binom{\upsilon(\pi)}{\pi}^{\!T} =\, \binom{\identity}{\hatx}$$
is a special case of transposing matrices in a sense that we describe in
the next section.

\section{The Burge transpose}\label{Section_Burge_words}

Let $\Mat_n$ be the set of matrices with nonnegative integer entries
whose every row and column has at least one nonzero entry and are such
that the sum of all entries is equal to $n$. For instance, $\Mat_2$
consists of the following five matrices:
$$
\begin{pmatrix}
  2
\end{pmatrix},\,
\begin{pmatrix}
  1 & 1
\end{pmatrix},\,
\begin{pmatrix}
  1 \\
  1
\end{pmatrix},\,
\begin{pmatrix}
  1 & 0 \\
  0 & 1
\end{pmatrix},\,
\begin{pmatrix}
  0 & 1 \\
  1 & 0
\end{pmatrix}.
$$
With each matrix $A=(a_{ij})$ in $\Mat_n$ we associate a biword in which
every column $\binom{i}{j}$ appears $a_{ij}$ times and the columns are
sorted in ascending order with respect to the top entry, breaking ties
by sorting in descending order with respect to the bottom entry. The
biwords corresponding to the five matrices above are
$$
\binom{1\,1}{1\,1},\, \binom{1\,1}{2\,1},\, \binom{1\,2}{1\,1},\,
\binom{1\,2}{1\,2},\, \binom{1\,2}{2\,1}.
$$
Note that if $i$ appears in the bottom row of such a biword, then each
$k$ such that $1\leq k<i$ also appears in the bottom row. This follows
from the requirement that each column of the corresponding matrix has at
least one nonzero entry. In other words, the bottom row is a Cayley
permutation. Similarly, the top row is a Cayley permutation. In fact, it
is a weakly increasing Cayley permutation.

Let $\WI_n$ be the subset of $\Cay_n$ consisting of the weakly
increasing Cayley permutations:
$$\WI_n = \{u\in\Cay_n: u(1)\leq u(2)\leq \dots \leq u(n)\}.
$$
To ease notation we will often write biwords as pairs. As an example,
the first two biwords in the list corresponding to matrices in $\Mat_2$
would be written $(11,11)$ and $(11, 21)$. In general, the set of
biwords corresponding to matrices in $\Mat_n$ is
$$\Bur_n = \{ (u, v)\in\WI_n\times\Cay_n: \Des(u)\subseteq \Des(v) \},
$$
where $\Des(v) = \{ i : v(i) \geq v(i+1)\}$ is the set of weak descents
of $v$. We shall call the elements of $\Bur_n$ \emph{Burge words}. This
terminology is due to Alexandersson and Uhlin~\cite{AlUh}. The
connection to Burge is with his variant of the RSK
correspondence~\cite{Bu}. Since $u$ is weakly increasing we
have $\Des(u)=\{i: u(i) = u(i+1)\}$. In particular,
$$|\Bur_n| = \sum_{v\in \Cay_n} 2^{\des(v)},
$$
where $\des(v)=|\Des(v)|$ is the number of weak descents in $v$. This is
sequence A120733 in the OEIS~\cite{Sl}.

The simple operation of transposing a matrix in $\Mat_n$ turns out to be
surprisingly useful. Assume that $A=(a_{ij})\in \Mat_n$ and that $w$ is
its corresponding biword in $\Bur_n$. Let $w^T$ denote the biword
corresponding to the transpose $A^T=(a_{ji})$ of $A$. It is easy to
compute $w^T$ without taking the detour via the matrix $A$. Turn each
column of $w$ upside down and then sort the columns as previously
described. In particular, if $\pi$ is a permutation, then
$$
\binom{\identity}{\pi}^{\!T} = \binom{\identity}{\pi^{-1}}.
$$

Also, if $\pi=\psi(\hat{x})$ is the Fishburn permutation corresponding to
the modified ascent sequence $\hat{x}$ and $\upsilon(\pi)$ is as described
in the previous section, then
$$
\binom{\upsilon(\pi)}{\pi}^{\!T} = \binom{\identity}{\hat{x}},
$$

Let $\BurF_n = \{(\upsilon(\pi), \pi): \pi\in \F_n\}$ and
$\BurMA_n = \{(\identity, \hat{x}): \hat{x}\in \Modasc_n\}$.  Then the
correspondence between Fishburn permutations and modified ascent
sequences is the identity
$$\BurF_n^T = \BurMA_n,$$
where $\BurF_n^T = \{w^T: w\in \BurF_n\}$ is the image of $\BurF_n$ under $T$.

It is clear that $\Bur_n$ is closed under transpose. In fact,
the definition of transposition given above applies to all biwords
in $\WI_n\times\Cay_n$ and it gives an alternative characterization
of the set $\Bur_n$, as seen in the following lemma.

\begin{lemma}\label{lemma_Burge_words}
  Let $w=(u,v) \in \WI_n\times\Cay_n$. Then $\Des(u)\subseteq\Des(v)$ if
  and only if $(w^T)^T=w$. Moreover, $T$ is an involution on $\Bur_n$.
\end{lemma}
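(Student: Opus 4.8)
The plan is to reduce everything to a single structural observation: a biword $w=(u,v)\in\WI_n\times\Cay_n$ lies in $\Bur_n$ precisely when its columns are already in the canonical order used to define $T$, namely ascending by top entry with ties broken descending by bottom entry. Since $u$ is weakly increasing we have $\Des(u)=\{i:u(i)=u(i+1)\}$, so the condition $\Des(u)\subseteq\Des(v)$ says exactly that $v(i)\ge v(i+1)$ whenever $u(i)=u(i+1)$; that is, within each maximal block of equal top entries the bottom entries weakly decrease. This is precisely the requirement that the columns of $w$ be sorted in the canonical order. Thus I would first record the equivalence
$$
w\in\Bur_n \iff \text{the columns of } w \text{ are in canonical order.}
$$

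Next I would isolate two elementary properties of $T$. First, $w^T$ depends only on the multiset of columns of $w$, because the defining recipe re-sorts the flipped columns regardless of their initial order. Second, flipping each column $\binom{u(i)}{v(i)}$ to $\binom{v(i)}{u(i)}$ and then flipping again returns the original multiset of columns. Combining these, $(w^T)^T$ is obtained by taking the multiset of columns of $w$ and arranging it in canonical order. I would also note that this canonical arrangement is unique: distinct columns have distinct key pairs $\binom{\text{top}}{\text{bottom}}$, so the sort is unambiguous, while repeated columns are interchangeable and yield the same biword.

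With these in hand the equivalence follows at once. If $w\in\Bur_n$ then its columns are already canonically ordered, so the canonical arrangement of its column multiset is $w$ itself, giving $(w^T)^T=w$. Conversely, $(w^T)^T$ is by construction in canonical order, so $(w^T)^T=w$ forces $w$ to be canonically ordered, whence $w\in\Bur_n$ by the first observation. For the final claim I would check that $T$ maps $\Bur_n$ into $\Bur_n$: the top row of $w^T$ is the sorted word $v$, a member of $\WI_n$; the bottom row is a rearrangement of the Cayley permutation $u$ and hence again lies in $\Cay_n$; and $w^T$ is canonically ordered by construction, so $w^T\in\Bur_n$. Together with $(w^T)^T=w$ on $\Bur_n$ this shows $T$ is an involution on $\Bur_n$.

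The only place demanding care is making the phrase \emph{canonical order} rigorous and confirming that $T$ genuinely depends only on the column multiset and produces a unique arrangement; once that bookkeeping is pinned down, both directions of the equivalence and the involution property fall out with no computation.
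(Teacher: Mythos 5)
Your proof is correct and follows essentially the same route as the paper's: both arguments rest on the two observations that the image of $T$ consists exactly of canonically sorted biwords (i.e.\ Burge words, via the equivalence with $\Des(u)\subseteq\Des(v)$) and that two Burge words sharing the same multiset of columns must coincide, with the converse direction obtained by noting that $w=(w^T)^T$ exhibits $w$ as an output of $T$. The paper's proof is simply a terser version of yours, leaving the canonical-order bookkeeping implicit in the definition of $\Bur_n$.
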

\begin{proof}
  By definition of $T$, the biword $w^T$ is a Burge word. Therefore
  $(\WI_n\times\Cay_n)^T\subseteq\Bur_n$. If $\Des(u)\subseteq \Des(v)$, then
  both $w$ and $(w^T)^T$ are Burge words, and since they share the same
  set of columns, we must have $w=(w^T)^T$. Conversely, suppose that
  $(w^T)^T=w$. Then $w=z^T$, for $z=w^T$, and so $w$ is a Burge word,
  or, equivalently, $\Des(u)\subseteq \Des(v)$, as desired.  That $T$ is an
  involution on $\Bur_n$ is immediate.
\end{proof}

It is well known that the $n$th Eulerian polynomial evaluated at $2$
equals the $n$th Fubini number. That is,
\begin{equation}\label{Eulerian-of-2}
  |\Cay_n| = \sum_{\pi\in\Sym_n}2^{\des(\pi)}.
\end{equation}
The following proof is taken from Stanley~\cite[Exercise~131(a), Chapter~1]{St}.
To each pair $(\pi, E)$, with $\pi\in\Sym_n$ and $E\subseteq \Des(\pi)$,
we bijectively
associate a ballot of $[n]$: Draw a vertical bar between $\pi(i)$ and
$\pi(i+1)$ if $i$ is an ascent or $i\in E$. Thus, if $\pi=319764825$ and
$E=\{1,5\}\subseteq\Des(\pi) = \{1,3,4,5,7\}$ we get the ballot
$3|1|976|4|82|5$.

We shall reformulate this proof in terms of the transpose of Burge
words. First a definition. The \emph{direct sum} $u \oplus v$ of two
Cayley permutations $u$ and $v$ is the concatenation $uv'$, where $v'$
is obtained from $v$ by adding $\max(u)$ to each of its elements. For
instance, $12\oplus 1112\oplus 11 \oplus 1 = 123334556$. We further
extend the direct sum to sets $U$ and $V$ of Cayley
permutations:
$$U\oplus V = \{ u\oplus v: u\in U, v\in V\}.$$

Let us now return to the proof of Equation~\eqref{Eulerian-of-2}. Let $\pi$
be a permutation of $[n]$. A \emph{descending run} of $\pi$ is a maximal
sequence of consecutive descending letters
$\pi(i) > \pi(i+1) > \dots > \pi(i+d-1)$. Let $\pi=B_1 B_2 \cdots B_t$ be the
decomposition of $\pi$ into descending runs and let $\ell(i)=|B_i|$ be the
length of the $i$th descending run. The descending runs of the example
permutation $\pi=319764825$ are $31$, $9764$, $82$ and $5$. The lengths of
those runs are $2$, $4$, $2$ and $1$. The next step is to pick a weakly
increasing Cayley permutation that is a direct sum of sequences of the
same lengths as the descending runs. That is, we will pick $u$ from
$\WI_2\oplus \WI_4\oplus \WI_2\oplus \WI_1$. Since $|I_k|=2^{k-1}$ there
are $2\cdot 8\cdot 2\cdot 1 = 32$ possible choices for $u$. Say we pick
$u=12\oplus 1112\oplus 11 \oplus 1 = 123334556$. Then
$$\binom{u}{\pi}^{\!T}
= \binom{1\,2\,3\,3\,3\,4\,5\,5\,6}{3\,1\,9\,7\,6\,4\,8\,2\,5}^{\!T}
= \binom{1\,2\,3\,4\,5\,6\,7\,8\,9}{2\,5\,1\,4\,6\,3\,3\,5\,3}
= \binom{\identity}{v}
$$
and the resulting Cayley permutation is $v=251463353$, which encodes the
same ballot, $\{3\}\{1\}\{9,7,6\}\{4\}\{8,2\}\{5\}$, as in the previous example.

For $\pi\in\Sym_n$, let
$$\WI(\pi) = \WI_{\ell(1)}\oplus\cdots\oplus \WI_{\ell(t)},
$$
where $t$ is the number of descending runs of $\pi$, or, equivalently,
\begin{equation}\label{I(pi)}
  \WI(\pi) = \{ u\in \WI_n : \Des(u) \subseteq \Des(\pi) \}.
\end{equation}
Define the set
$\basis(\pi)\subseteq \Cay_n$ by
$$\bigl(\WI(\pi)\times \{\pi\}\bigr)^{\!T} = \{\identity\}\times\basis(\pi).
$$
We call $\basis(\pi)$ the \emph{Fishburn basis} of $\pi$. The reason will
become evident later. In particular,
\begin{equation}\label{card_of_basis}
  |B(\pi)| = |I(\pi)| = 2^{\des(\pi)}.
\end{equation}

Alternatively, let the underlying permutation of a ballot be obtained by
sorting elements within blocks decreasingly and then removing the curly
brackets. Thus, the underlying permutation of
$\{3\}\{1\}\{9,7,6\}\{4\}\{8,2\}\{5\}$ is $319764825$. This defines a
natural surjection from ballots to permutations and $\basis(\pi)$ is
exactly the collection of encodings of ballots whose underlying
permutation is $\pi$. In particular,
$$\bigcup_{\pi\in\Sym_n}\basis(\pi)=\Cay_n$$
in which the union is disjoint. Equation~\eqref{Eulerian-of-2} follows in view of~\eqref{card_of_basis}.

\begin{example*}
Let $\pi=3142$. The descending runs of $\pi$ are $31$ and $42$. Further,
$$\WI(3142)=\WI_2\oplus\WI_2 = \lbrace 1122,1123,1233,1234\rbrace.\qquad
$$
The Fishburn basis $\basis(3142)$ is defined by
\begin{align*}
\bigl(\WI(3142)\times \{3142\}\bigr)^{\!T} &= \{1234\}\times\basis(3142), \\
\shortintertext{where}
\bigl(\WI(3142)\times \{3142\}\bigr)^{\!T}&=
\Biggl\{
\makebox[8.5ex][l]{\ensuremath{\displaystyle\binom{1122}{3142}^{\!T}}},
\makebox[8.5ex][l]{\ensuremath{\displaystyle\binom{1123}{3142}^{\!T}}},
\makebox[8.5ex][l]{\ensuremath{\displaystyle\binom{1233}{3142}^{\!T}}},
\makebox[8.5ex][l]{\ensuremath{\displaystyle\binom{1234}{3142}^{\!T}}}
\;\Biggr\}\\
&=\Biggl\{
\makebox[8.5ex][l]{\ensuremath{\displaystyle\binom{1234}{1212}}},
\makebox[8.5ex][l]{\ensuremath{\displaystyle\binom{1234}{1312}}},
\makebox[8.5ex][l]{\ensuremath{\displaystyle\binom{1234}{2313}}},
\makebox[8.5ex][l]{\ensuremath{\displaystyle\binom{1234}{2413}}}
\;\Biggr\}.
\end{align*}
At the end we get
$\basis(3142)=\left\lbrace 1212,1312,2313,2413\right\rbrace$.  The
corresponding ballots, each of which has $3142$ as the underlying
permutation, are $\lbrace 3,1\rbrace\lbrace 4,2\rbrace$,
$\lbrace 3,1\rbrace\lbrace 4\rbrace\lbrace 2\rbrace$,
$\lbrace 3\rbrace\lbrace 1\rbrace\lbrace 4,2\rbrace$, and
$\lbrace 3\rbrace\lbrace 1\rbrace\lbrace 4\rbrace\lbrace 2\rbrace$.
\end{example*}

\section{The transport theorem}\label{Section_transport_theorem}
%INTRO
The main goal of this section is to prove a transport theorem for $\Cay$
and $\Sym$. We first define a notion of pattern containment on $\Bur$
for which the Burge transpose $T$ behaves nicely. More specifically, $T$
preserves pattern containment when it is used to map $\Cay$ onto $\Sym$
(Corollary~\ref{transport_lemma}).  Transporting patterns in the other
direction, that is from $\Sym$ to $\Cay$, requires the introduction of a
notion of equivalence on $\Cay$. The main result of this section,
Theorem~\ref{transport_theorem}, is a transport theorem for the
resulting equivalence classes of Cayley permutations and classical
permutations. In Section~\ref{Section_transport_F_Modasc} we will
specialize Theorem~\ref{transport_theorem} to obtain a transport theorem
for $\Modasc$ and $\F$ (Theorem~\ref{transport_theorem_modasc_fish}),
which is the most tangible application of the proposed framework. To
help the reader see where we are heading let us paraphrase this theorem here:
\textit{%
For any permutation $\sigma$ we have
$$
\F(\sigma) = \psi\bigl(\Modasc(\basis(\sigma))\bigr).
$$
In other words, the set $\F(\sigma)$ of Fishburn permutations avoiding
$\sigma$ is mapped via the bijection $\hatfun^{-1}$ to the set
$\Modasc(\basis(\sigma))$ of modified ascent sequences avoiding
all patterns in the Fishburn basis $\basis(\sigma)$.
}

Consider the map $\Gamma: \Bur_n \to \Cay_n$ defined by
$$\binom{u}{v}^{\!T} = \binom{y}{\Gamma(u,v)},$$
for any $(u,v) \in \Bur_n$. Let us write $\sort(v)$ for the word
obtained by sorting $v$ in weakly increasing order. Then
$y=\sort(v)$ and, since $T$ is an involution, $u=\sort(\Gamma(u,v))$.
For example, in the previous section we computed
$$
\binom{1\,2\,3\,3\,3\,4\,5\,5\,6}{3\,1\,9\,7\,6\,4\,8\,2\,5}^{\!T}
= \binom{1\,2\,3\,4\,5\,6\,7\,8\,9}{2\,5\,1\,4\,6\,3\,3\,5\,3},
$$
that is $\Gamma(123334556,319764825)=251463353$.

\begin{defin}\label{defin_burge_labeling}
  Let $E \subseteq \Cay$ be a set of Cayley permutations. A \emph{Burge
    labeling} on $E$ is a map $\lambda: E \to \WI$ such that
  $(\lambda(x), x)$ is a Burge word for each $x \in E$. Equivalently,
  $\Des(\lambda(x)) \subseteq \Des(x)$ for each $x \in E$.
\end{defin}

Let $\lambda$ be a Burge labeling on $E$. Then $\lambda$ induces a map
$\Gamma_\lambda: E \to \Cay$ by
$$\Gamma_\lambda (x) = \Gamma(\lambda(x),x).
$$
If $\lambda$ is injective, then $\Gamma_\lambda$ is also injective. Indeed suppose
that $\Gamma_\lambda(x)=\Gamma_\lambda(y)$. Then
$\lambda(x)=\sort(\Gamma_\lambda(x))=\sort(\Gamma_\lambda(y))=\lambda(y)$
and thus $x=y$, if $\lambda$ is injective.

This construction becomes particularly meaningful for specific
labelings. Let $\iota:\Cay \to \WI$ be defined by
$\iota(x)=\identity_n$, for each $x \in \Cay_n$. Since
$\Des(\iota(x))=\emptyset$, the mapping $\iota$ is clearly a Burge
labeling on $\Cay$.  From now on, let $\gamma=\Gamma_\iota$.

\begin{lemma}\label{sort_is_composition}
  We have $\sort(x) = x \circ \gamma(x)$ for each Cayley permutation $x$.
\end{lemma}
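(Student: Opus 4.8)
The plan is to prove the identity pointwise, by reading off the two sides directly from the columns of the transposed Burge word $\binom{\identity}{x}^{\!T}$. Recall that $\gamma(x)=\Gamma_\iota(x)=\Gamma(\identity_n,x)$ is, by definition of $\Gamma$, the bottom row of $\binom{\identity}{x}^{\!T}$, while its top row is $\sort(x)$ (as noted just before the statement, the top row of any transpose $\binom{u}{v}^{\!T}$ is $\sort(v)$). So I want to show that the composite function $x\circ\gamma(x)$, namely $k\mapsto x(\gamma(x)(k))$, agrees with $\sort(x)$ in every position $k\in[n]$.

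First I would unwind the Burge transpose explicitly. The biword $\binom{\identity}{x}$ has columns $\binom{i}{x(i)}$ for $i\in[n]$. Forming the transpose turns each column upside down, producing the columns $\binom{x(i)}{i}$, and then re-sorts them in ascending order of the top entry, breaking ties by descending bottom entry. In particular the bottom row ranges over all of $[n]$ exactly once, so $\gamma(x)$ is a permutation of $[n]$ and $x\circ\gamma(x)$ is a well-defined word of length $n$.

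The key step is to match columns. Fix a position $k\in[n]$. By construction the $k$-th column of $\binom{\identity}{x}^{\!T}$ is $\binom{\sort(x)(k)}{\gamma(x)(k)}$, and this column is exactly one of the flipped columns $\binom{x(j)}{j}$. Comparing bottom entries forces $j=\gamma(x)(k)$, and then comparing top entries gives
$$\sort(x)(k)=x(j)=x\bigl(\gamma(x)(k)\bigr)=\bigl(x\circ\gamma(x)\bigr)(k).$$
Since $k\in[n]$ was arbitrary, $\sort(x)=x\circ\gamma(x)$, as claimed.

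The argument is essentially bookkeeping, so I do not expect a genuine obstacle; the only points requiring care are the orientation of the composition $x\circ\gamma(x)$ (apply $\gamma(x)$ first, then $x$) and the tie-breaking rule in the sort. The tie-breaking does determine \emph{which} permutation $\gamma(x)$ is when $x$ has repeated values, but it does not affect the identity above: whatever permutation the transpose produces, each of its columns is a flip of some $\binom{i}{x(i)}$, and that is all the column-matching step uses.
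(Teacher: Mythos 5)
Your proof is correct and follows essentially the same route as the paper's: both identify the $k$-th column of $\binom{\identity}{x}^{\!T}$ with the flip of a column of $\binom{\identity}{x}$, use the bottom entry (equivalently, the identity top row of the original) to pin down which column it is, and read off $\sort(x)(k)=x(\gamma(x)(k))$ from the top entry. The only cosmetic difference is that the paper flips the column of the transpose back into $\binom{\identity}{x}$ rather than flipping forward, which is the same bookkeeping.
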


\begin{proof}
  To ease notation, let $\pi=\gamma(x)$. By definition of $\gamma$ we
  have $\bigl(\identity_n, x\bigr)^T\! = \bigl(\sort(x), \pi\bigr)$.
  The $i$th column of this biword is $\bigl(\sort(x)(i),\pi(i)\bigr)$.
  By definition of Burge transpose, $\bigl(\pi(i), \sort(x)(i)\bigr)$
  must be a column of $\bigl(\identity_n, x\bigr)$. Indeed, it is the
  $\pi(i)$th column of $\bigl(\identity_n, x\bigr)$, but that column is
  plainly also equal to $\bigl(\pi(i), x(\pi(i))\bigr)$, and hence
  $\sort(x)(i)=x(\pi(i))$ as claimed.
\end{proof}

\begin{remark}\label{remark_phi_gen_inv}
  If $\pi\in S_n$, then
  $\identity_n = \sort(\pi) = \pi \circ \gamma(\pi)$ by
  Lemma~\ref{sort_is_composition}.  That is, $\gamma(\pi) =
  \pi^{-1}$. In this sense, $\gamma: \Cay \to \Sym$ generalizes the
  permutation inverse to $\Cay$.
\end{remark}

\begin{remark}\label{remark_phi_on_modasc}
  Recall that, for any modified ascent sequence $x$, we have
  $$
  \bigl(\identity, x\bigr)^T =
  \bigl(\sort(x), \psi(x)\bigr),
  $$
  where $\psi: \Modasc \to \F$ is the bijection described in
  Section~\ref{Section_fishburn_perm}. Thus, restricting $\iota$ to
  $\Modasc$ gives the map $\Gamma_{\iota_{|\Modasc}} = \psi$. That is,
  $\gamma_{|_{\Modasc}} = \psi$ and in this sense $\gamma$ generalizes
  $\psi$ to $\Cay$. On the other hand, consider the map
  $\upsilon : \F \to \WI$ introduced in
  Section~\ref{Section_fishburn_perm}. It is easy to see that $\upsilon$
  is a Burge labeling on $\F$ and $\Gamma_\upsilon: \F \to \Modasc$ is
  equal to $\psi^{-1}$, the inverse map of $\psi$.
\end{remark}

Next, we extend the pattern containment relation from Cayley permutations to Burge words.

\begin{defin}
Let $(u',v')$ in $\Bur_k$ and $(u,v)$ in $\Bur_n$.
Then $(u',v')\leq (u,v)$ if there is an increasing injection
$\alpha: [k]\to [n]$ such that $u\circ\alpha$ and $v\circ\alpha$ are
order isomorphic to $u'$ and $v'$, respectively.
In other words, there is a subset of columns
determined by the indices
$\alpha([k])=\lbrace i_1,\dots,i_k\rbrace$, with $i_1<\cdots<i_k$,
such that both $u(i_1)\cdots u(i_k)\simeq u'$ and
$v(i_1)\cdots v(i_k)\simeq v'$. We also say that
$\bigl(u(i_1)\cdots u(i_k),v(i_1)\cdots v(i_k)\bigr)$ is an
\emph{occurrence} of $(u',v')$ in $(u,v)$.
\end{defin}

As an important special case, $(\identity_k,v')\leq (\identity_n,v)$
if and only if $v'\leq v$.
The next lemma shows that the Burge transpose behaves well
with respect to pattern containment on biwords.

\begin{lemma}\label{fundamental_lemma}
Let $(u,x)\in\Bur_n$ and $(v,y)\in\Bur_k$. Then:
$$
(v,y)\leq (u,x)  \iff (v,y)^T\leq(u,x)^T.
$$
\end{lemma}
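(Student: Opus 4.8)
The plan is to reduce the statement to a symmetry that becomes manifest once occurrences are described purely in terms of (multisets of) columns. Since $T$ is an involution on $\Bur_n$ (Lemma~\ref{lemma_Burge_words}), it suffices to prove the implication $(v,y)\le(u,x)\Rightarrow (v,y)^T\le(u,x)^T$; the reverse implication then follows by applying this to the pair $(v,y)^T$, $(u,x)^T$ and using $(w^T)^T=w$.

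The key reformulation I would establish first is: a Burge word $(v,y)$ occurs in $(u,x)$ if and only if there is a sub-multiset $M$ of the columns of $(u,x)$ that is simultaneously order-isomorphic, in the top entries and in the bottom entries, to the columns of $(v,y)$. Concretely, writing the columns of $(u,x)$ as $\binom{u(i)}{x(i)}$, I want to show that choosing an increasing sequence of positions $i_1<\cdots<i_k$ with $u(i_1)\cdots u(i_k)\simeq v$ and $x(i_1)\cdots x(i_k)\simeq y$ is the same data as choosing such an $M$. The only thing to check is that reading $M$ in the canonical order of $(u,x)$ (top entries ascending, ties broken by bottom entries descending) matches, column by column, the canonical order of $(v,y)$. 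This holds because a simultaneous order isomorphism preserves exactly the comparisons --- strict and weak inequalities among tops, and among bottoms --- that define the canonical order; the point requiring care is the tie-breaking, i.e.\ verifying that two columns of $(u,x)$ with equal top entries are listed in the same relative order as the corresponding columns of $(v,y)$, which follows since equality of tops is preserved and the secondary (descending bottom) comparison is preserved as well.

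With this reformulation in hand, the transpose step is immediate. By definition, $(u,x)^T$ is obtained by flipping every column $\binom{u(i)}{x(i)}\mapsto\binom{x(i)}{u(i)}$ and re-sorting, so there is a column-flipping bijection between the columns of $(u,x)$ and those of $(u,x)^T$; the same holds for $(v,y)$ and $(v,y)^T$. Given an occurrence of $(v,y)$ in $(u,x)$, represented by a sub-multiset $M$ as above, let $M'$ be the flip of $M$, which is a sub-multiset of the columns of $(u,x)^T$. The property ``$M$ is simultaneously order-isomorphic to the columns of $(v,y)$ in both coordinates'' is symmetric under interchanging the top and bottom coordinates; hence $M'$ is simultaneously order-isomorphic to the columns of $(v,y)^T$. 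Applying the reformulation to $(u,x)^T$ and $(v,y)^T$ (both Burge words by Lemma~\ref{lemma_Burge_words}) shows that $M'$ witnesses $(v,y)^T\le(u,x)^T$, as desired.

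I expect the main obstacle to be the bookkeeping in the reformulation rather than the transpose itself: one must argue carefully that occurrences, defined via increasing position sequences in the canonically ordered biwords, are in bijection with column sub-multisets, paying attention to repeated columns and to the descending tie-break on equal top entries. Once occurrences are phrased invariantly in terms of columns, the equivalence $(v,y)\le(u,x)\iff(v,y)^T\le(u,x)^T$ is just the observation that flipping all columns swaps the roles of the two coordinates uniformly on the pattern and the containing word alike.
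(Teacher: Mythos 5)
Your proposal is correct and takes essentially the same route as the paper: the paper's proof likewise handles one direction and gets the converse from $T$ being an involution, and its key observation---that the relative order of any pair of columns under $T$ is unaffected by the remaining columns, so columns order-isomorphic to $(v,y)$ are carried to an occurrence of $(v,y)^T$---is exactly what your column-multiset reformulation and tie-breaking check make explicit. The only difference is that you spell out the bookkeeping the paper compresses into two sentences.
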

\begin{proof}
  Suppose that $(v,y)\leq (u,x)$ and let
  $(u',x')=(u(i_1)\cdots u(i_k),x(i_1)\cdots x(i_k))$ be an occurrence
  of $(v,y)$ in $(u,x)$.  The relative order of any pair of columns is
  not affected by the remaining columns when $T$ is applied.  In other
  words, the effect of $T$ on $(u',x')$ is the same as the one on the
  (order isomorphic) biword $(v,y)$. Therefore $(u', v')$ is mapped by
  $T$ to an occurrence of $(v,y)^T$ in $(u,x)^T$, as desired. The
  converse follows from $T$ being an involution.
\end{proof}

\begin{corollary}\label{transport_lemma}
  Let $x\in\Cay_n$.
  \begin{enumerate}
%  \item If $y\leq x$, then $\gamma(y)\leq\gamma(x)$.
%  \item If $\gamma(y)\leq\gamma(x)$, then there exists $y'\in\Cay_k$
%    such that $y'\leq x$ and $\gamma(y')=\gamma(y)$.
  \item If $y\in\Cay_k$ and $y\leq x$, then $\gamma(y)\leq\gamma(x)$.
  \item If $\sigma\in\Sym_k$ and $\sigma\leq\gamma(x)$, then there exists $y\in\Cay_k$
    such that $y\leq x$ and $\gamma(y)=\sigma$.
  \end{enumerate}
\end{corollary}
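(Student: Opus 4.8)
The plan is to derive both statements from Lemma~\ref{fundamental_lemma}, together with the special case $(\identity_k, v')\leq(\identity_n, v)\iff v'\leq v$ and the defining identity $\binom{\identity_n}{x}^{\!T}=\binom{\sort(x)}{\gamma(x)}$. For part~1, suppose $y\leq x$ with $y\in\Cay_k$. Since $\Des(\identity)=\emptyset$, both $(\identity_k,y)$ and $(\identity_n,x)$ are Burge words, and by the special case $y\leq x$ is equivalent to $(\identity_k,y)\leq(\identity_n,x)$. Applying the Burge transpose and Lemma~\ref{fundamental_lemma} then yields
$$\bigl(\sort(y),\gamma(y)\bigr)=(\identity_k,y)^{\!T}\leq(\identity_n,x)^{\!T}=\bigl(\sort(x),\gamma(x)\bigr).$$
Reading off the bottom rows of this containment of Burge words gives exactly $\gamma(y)\leq\gamma(x)$, proving part~1.

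For part~2, suppose $\sigma\in\Sym_k$ and $\sigma\leq\gamma(x)$, and fix an occurrence of $\sigma$ in $\gamma(x)$ at positions $i_1<\cdots<i_k$. I would restrict the Burge word $\bigl(\sort(x),\gamma(x)\bigr)$ to these columns and standardize, obtaining a biword $(u',\sigma)$ in which $u'\in\WI_k$ is the pattern of $\sort(x)(i_1)\cdots\sort(x)(i_k)$ and the bottom row is order isomorphic to $\sigma$. Granting that $(u',\sigma)$ is a Burge word, it is by construction an occurrence of $(u',\sigma)$ inside $\bigl(\sort(x),\gamma(x)\bigr)$, so $(u',\sigma)\leq\bigl(\sort(x),\gamma(x)\bigr)$. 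Since $\sigma$ is a permutation, $\sort(\sigma)=\identity_k$, so setting $y=\Gamma(u',\sigma)$ we have $(u',\sigma)^{\!T}=(\identity_k,y)$ with $y\in\Cay_k$. Lemma~\ref{fundamental_lemma} then gives $(\identity_k,y)\leq(\identity_n,x)$, and the special case yields $y\leq x$. Finally, because $T$ is an involution, $(\identity_k,y)^{\!T}=(u',\sigma)$, so comparing with $\binom{\identity_k}{y}^{\!T}=\binom{\sort(y)}{\gamma(y)}$ forces $\gamma(y)=\sigma$ (and $\sort(y)=u'$), completing part~2.

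The one step that needs genuine justification, and that I expect to be the crux, is the claim that the restricted-and-standardized biword $(u',\sigma)$ really is a Burge word, i.e.\ that $\Des(u')\subseteq\Des(\sigma)$. Since standardization preserves the relations $=$, $<$, $>$ between entries, this amounts to showing that whenever $\sort(x)(i_s)=\sort(x)(i_{s+1})$ we have $\gamma(x)(i_s)\geq\gamma(x)(i_{s+1})$. The key observation is that $\sort(x)$ is weakly increasing, so equality of its values at $i_s<i_{s+1}$ forces $\sort(x)$ to be constant on the whole block of positions $i_s,i_s+1,\dots,i_{s+1}$; within such a block of constant top entry the sorting convention defining the Burge transpose makes the bottom row $\gamma(x)$ weakly decreasing, whence $\gamma(x)(i_s)\geq\gamma(x)(i_{s+1})$. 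This is precisely the assertion that restricting a Burge word to a set of columns and standardizing again returns a Burge word, and once it is in hand the remainder of the argument is a formal manipulation of transposes.
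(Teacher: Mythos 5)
Your proof is correct and takes essentially the same route as the paper's: part~1 is the specialization $u=\identity_n$, $v=\identity_k$ of Lemma~\ref{fundamental_lemma}, and part~2 restricts $(\sort(x),\gamma(x))$ to the columns of the occurrence, standardizes to obtain a Burge word $(u',\sigma)$, and transposes back via Lemma~\ref{fundamental_lemma} to get $y=\Gamma(u',\sigma)$ with $y\leq x$ and $\gamma(y)=\sigma$. The only difference is that you explicitly verify $\Des(u')\subseteq\Des(\sigma)$, i.e.\ that restricting and standardizing yields a genuine Burge word---a point the paper leaves implicit---and your justification of it is correct.
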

\begin{proof}
  The first statement follows by letting $u=\identity_n$ and
  $v=\identity_k$ in Lemma~\ref{fundamental_lemma}.  For the second
  statement, suppose that $\sigma\leq\gamma(x)$ and let
  $\gamma(x)(i_1)\cdots\gamma(x)(i_k)$ be an occurrence of $\sigma$ in
  $\gamma(x)$. We have:
  $$
  \binom{\identity_n}{x}^{\!T} =
  \binom{\sort(x)}{\gamma(x)} =
  \begin{pmatrix}
    \dots & \sort(x)(i_1) & \dots & \sort(x)(i_k) & \dots \\
    \dots & \gamma(x)(i_1) & \dots & \gamma(x)(i_k) & \dots
  \end{pmatrix}.
  $$
  Let $v\in\WI_k$ be the only weakly increasing Cayley permutation that
  is order isomorphic to $\sort(x)(i_1)\ldots\sort(x)(i_k)$. Note that
  $(\sort(x),\gamma(x))\geq (v,\sigma)$. Thus, again by
  Lemma~\ref{fundamental_lemma}, we have:
  $$
  \binom{\identity_n}{x}=
  \binom{\sort(x)}{\gamma(x)}^{\!T}\geq
  \binom{v}{\sigma}^{\!T}=
  \binom{\identity_k}{\Gamma(v,\sigma)}.
  $$
  Therefore, for $y=\Gamma(v,\sigma)$ we have $x\ge y$ and
  $\gamma(y)=\sigma$, as desired.
\end{proof}

\begin{example*}
Let $x=251463353$. We have
$$
\binom{\identity}{x}^{\!T}=
\binom{\underline{1}\,\underline{2}\,3\,4\,5\,6\,\underline{7}\,\underline{8}\,9}
      {\underline{2}\,\underline{5}\,1\,4\,6\,3\,\underline{3}\,\underline{5}\,3}^{\!T}=
\binom{1\,\underline{2}\,3\,\underline{3}\,3\,4\,\underline{5}\,\underline{5}\,6}
      {3\,\underline{1}\,9\,\underline{7}\,6\,4\,\underline{8}\,\underline{2}\,5}=
\binom{\sort(x)}{\gamma(x)}.
$$
The underlined subset of columns highlights that $T$ maps the occurrence
$(1278,2535)$ of $(1234,1323)$ to an occurrence $(2355,1782)$
of $(1233,1342)=(1234,1323)^T$
as per Lemma~\ref{fundamental_lemma}, with $(v,y)=(1234,1323)$,
and Corollary~\ref{transport_lemma}, with $y=1323$.
On the other hand, every occurrence of $1342$ in $\gamma(x)$
does not necessarily come from an occurrence of $1323$ in $x$.  For
example, $(2356,1785)$ is the image under $T$ of $(1578,2635)$. Here
$2635\simeq 1423$ whereas in the previous case the occurrence $1782$ of
$1342$ resulted from $2535\simeq 1323$. Note that $\gamma(1423)=\gamma(1323)=1342$
as per Corollary~\ref{transport_lemma} with $\sigma=1342$.
\end{example*}

The first item of Corollary~\ref{transport_lemma} shows that when the
Burge transpose is used as a map from $\Cay$ to $\Sym$---by means of
$\gamma$---it preserves pattern containment.  The second item of the
same corollary shows that when the Burge transpose is used as a map from
$\Sym$ to $\Cay$---by means of the computation
$(\sort(x),\gamma(x))^T=(\identity_n,x)$---an occurrence of a pattern
$\sigma$ in $\gamma(x)$ is mapped to an occurrence of some $y$ in $x$,
with $\gamma(y)=\sigma$.  This leads us to define an equivalence
relation $\sim$ on $\Cay$ by $y \sim y'$ if and only if
$\gamma(y)=\gamma(y')$. Denote by $[y]$ the equivalence class
$$[y] = \lbrace y' \in \Cay: y \sim y' \rbrace
$$
of $y$, and denote by $[\Cay]$ the quotient set
$$
[\Cay] = \lbrace [y]: y \in \Cay \rbrace.
$$
Since $\sim$ is the equivalence relation induced by $\gamma$, there is a
unique injective map $\tilde\gamma$ such that the diagram
$$
\begin{tikzpicture}
  \matrix (m) [matrix of math nodes,row sep=3.5em,column sep=7em,minimum width=2em]
  {
    \Cay & \Sym  \\
    \text{$[\Cay]$} & \\
  };
  \path[-stealth, semithick]
  (m-1-1) edge node [above, xshift=-6pt, yshift=2pt] {$\gamma$}
  (m-1-2)
  (m-1-1) edge node [below, xshift=-22pt, yshift=10pt]{$x \mapsto [x]$}
  (m-2-1)
  (m-2-1) edge node [right, xshift=-8pt, yshift=-12pt] {$\tilde\gamma$} (m-1-2);
\end{tikzpicture}
$$
commutes. Furthermore, since $\gamma$ is surjective, $\tilde\gamma$ is
surjective too.  Indeed, for any permutation $\sigma$, we have
$\tilde\gamma([\sigma^{-1}])=\gamma(\sigma^{-1})=\sigma$. Thus $\tilde\gamma$ is
a bijection and the quotient set $[\Cay]$ is equinumerous with $S$, the
set of permutations. By slight abuse of notation we will write $\gamma$
for $\tilde\gamma$ as well. That is, we have two functions
$\gamma: \Cay\to \Sym$ and $\gamma: [\Cay]\to \Sym$, and it should be
clear from the context which one is referred to.
Equivalence classes of Cayley permutations up to length four are listed
in Table~\ref{table_equiv_classes}.

\begin{table}
$$
\begin{array}{ccc}
\begin{array}[t]{c|c}
\text{Equivalence class }[y] & \gamma(y)\\
\hline
\underline{1} & \underline{1}\bigskip\\
\hline
\underline{12} & \underline{12}\\
\underline{11},21 & \underline{21}\bigskip\\
\hline
\underline{123} & \underline{123}\\
\underline{122},132 & \underline{132}\\
\underline{112},213 & \underline{213}\\
212,312 & 231\\
\underline{121},231 & \underline{312}\\
\underline{111},211,221,321 & \underline{321}\bigskip\\
\hline
\underline{1234} & \underline{1234}\\
\underline{1233},1243 & \underline{1243}\\
\underline{1223},1324 & \underline{1324}\\
1323,1423 & 1342\\
\underline{1232},1342 & \underline{1423}\\
\underline{1222},1322,1332,1432 & \underline{1432}\\
\end{array}
&&
\begin{array}[t]{c|c}
\text{Equivalence class }[y] & \gamma(y)\\
\hline
\underline{1123},2134 & \underline{2134}\\
\underline{1122},1132,2133,2143 & \underline{2143}\\
2123,3124 & 2314\\
3123,4123 & 2341\\
2132,3142 & 2413\\
2122,3122,3132,4132 & 2431\\
\underline{1213},2314 & \underline{3124}\\
\underline{1212},1312,2313,2413 & \underline{3142}\\
\underline{1112},2113,2213,3214 & \underline{3214}\\
2112,3112,3213,4213 & 3241\\
2312,3412 & 3412\\
2212,3212,3312,4312 & 3421\\
\underline{1231},2341 & \underline{4123}\\
\underline{1221},1321,2331,2431 & \underline{4132}\\
\underline{1121},2131,2231,3241 & \underline{4213}\\
2121,3121,3231,4231 & 4231\\
\underline{1211},2311,2321,3421 & \underline{4312}\\
\underline{1111},2111,2211,2221,3211,3221,4321 & \underline{4321}\\
\end{array}
\end{array}
$$
\caption{Equivalence classes $[y]\in[\Cay]$ and their image $\gamma(y)$
  for $1\leq |y|\leq 4$. Modified ascent sequences and Fishburn
  permutations are underlined.}\label{table_equiv_classes}
\end{table}

Next we extend the notion of pattern containment to $[\Cay]$.

\begin{defin}
Let $[x]$ and $[y]$ in $[\Cay]$. Then $[x]\ge [y]$
if $x' \ge y'$ for some $x'\in [x]$ and $y'\in [y]$.
\end{defin}

Corollary~\ref{transport_lemma} can then be reformulated as a 
transport theorem between permutations and equivalence classes
of Cayley permutations.

\begin{theorem}[The transport theorem]\label{transport_theorem}
  Let $x,y \in \Cay$. Then
  $$
  [x] \ge [y] \iff \gamma(x) \ge \gamma(y)
  $$
  or, equivalently,
  $$
  \gamma\bigl( [\Cay][y] \bigr) = \Sym\bigl( \gamma(y) \bigr).
  $$
\end{theorem}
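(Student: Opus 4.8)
The plan is to derive the theorem directly from Corollary~\ref{transport_lemma}, which already encapsulates both directions of the transport. First I would check that the two displayed formulations are equivalent, so that it suffices to prove the biconditional $[x]\ge[y]\iff\gamma(x)\ge\gamma(y)$. Taking contrapositives turns this into $[x]\not\ge[y]\iff\gamma(x)\not\ge\gamma(y)$; that is, $[x]$ avoids $[y]$ precisely when $\gamma(x)$ avoids $\gamma(y)$. Since $\gamma\colon[\Cay]\to\Sym$ is a bijection (established just before the statement), the set of classes $[x]$ with $\gamma(x)\in\Sym(\gamma(y))$ maps bijectively onto $\Sym(\gamma(y))$, and this is exactly the identity $\gamma([\Cay][y])=\Sym(\gamma(y))$, where $[\Cay][y]$ denotes the classes in $[\Cay]$ avoiding $[y]$.

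For the forward implication I would unfold the definition of containment on $[\Cay]$. If $[x]\ge[y]$, then by definition there are representatives $x'\in[x]$ and $y'\in[y]$ with $x'\ge y'$. Applying the first item of Corollary~\ref{transport_lemma} to $y'\le x'$ gives $\gamma(y')\le\gamma(x')$. Because $\sim$ is by definition the relation $\gamma(y')=\gamma(y)$, the map $\gamma$ is constant on equivalence classes, so $\gamma(y')=\gamma(y)$ and $\gamma(x')=\gamma(x)$, whence $\gamma(x)\ge\gamma(y)$.

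For the reverse implication I would invoke the second item of the same corollary. Assuming $\gamma(x)\ge\gamma(y)$, set $\sigma=\gamma(y)$, so that $\sigma\le\gamma(x)$. The corollary then produces a Cayley permutation $y'\in\Cay$ with $y'\le x$ and $\gamma(y')=\sigma=\gamma(y)$; in particular $y'\sim y$, i.e.\ $y'\in[y]$. Taking $x$ itself as a representative of $[x]$, the containment $x\ge y'$ witnesses $[x]\ge[y]$, as required.

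The argument is genuinely short once Corollary~\ref{transport_lemma} is available; the point I would emphasize is \emph{why} the theorem must live on the quotient $[\Cay]$ rather than on $\Cay$. In the reverse direction the corollary does not hand back the given $y$, only some $y'$ equivalent to it, which reflects the failure of $\gamma$ to be injective on $\Cay$ and its being a bijection only after passing to $[\Cay]$. Consequently the sole delicate point is matching the existential clause in the definition of $\le$ on $[\Cay]$ with the representatives the corollary supplies: a fixed $x$ on the larger side and the produced $y'$ on the smaller side. No combinatorial input beyond Lemma~\ref{fundamental_lemma} and its corollary is needed.
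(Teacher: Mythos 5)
Your proof is correct and takes essentially the same route as the paper's: both implications are read off from Corollary~\ref{transport_lemma}, using in the forward direction that $\gamma$ is constant on equivalence classes, and in the reverse direction the existential clause of item (2) with $x$ itself serving as the representative of $[x]$. Your extra observations (the equivalence of the two displayed formulations via bijectivity of $\gamma$ on $[\Cay]$, and why the statement must be made on the quotient) are accurate but go beyond what the paper's proof records.
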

\begin{proof}
  Suppose that $[x]\ge[y]$, that is, $x'\ge y'$ for some $x'\in[x]$ and
  $y'\in[y]$. Then $\gamma(x')\geq\gamma(y')$ by
  Corollary~\ref{transport_lemma}. By definition of the equivalence
  relation we have $\gamma(x')=\gamma(x)$ and $\gamma(y')=\gamma(y)$,
  and hence $\gamma(x) \ge \gamma(y)$.  Conversely, if
  $\gamma(x)\geq\gamma(y)$, then by Corollary~\ref{transport_lemma}
  there exists $y'$ such that $\gamma(y')=\gamma(y)$, that is,
  $y'\in[y]$ and $x\geq y'$, from which $[x]\geq [y]$ follows.
\end{proof}

The equivalence class of $y$ is none other than the Fishburn
basis of $\sigma=\gamma(y)$:

\begin{lemma}\label{fishburn_basis}
  For $y \in \Cay$ we have $[y]=\basis (\gamma(y))$. Moreover, for each
  permutation $\sigma\in S$, we have $\basis(\sigma)=[\sigma^{-1}]$.
\end{lemma}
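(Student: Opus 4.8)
The plan is to prove both assertions at once by identifying, for each permutation $\sigma\in\Sym_n$, the Fishburn basis $\basis(\sigma)$ with the fiber $\{z\in\Cay:\gamma(z)=\sigma\}$ of $\gamma$ over $\sigma$; both statements of the lemma then fall out immediately.

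First I would unwind the definition of $\basis(\sigma)$. By definition, $z\in\basis(\sigma)$ exactly when $(u,\sigma)^T=(\identity,z)$ for some $u\in\WI(\sigma)$. Since $T$ is an involution on $\Bur_n$ (Lemma~\ref{lemma_Burge_words}), this is equivalent to $(u,\sigma)=(\identity,z)^T=(\sort(z),\gamma(z))$, the last equality being the defining property of $\gamma$. Comparing rows forces $u=\sort(z)$ and, crucially, $\sigma=\gamma(z)$. Hence every element of $\basis(\sigma)$ lies in the fiber $\gamma^{-1}(\sigma)$.

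For the reverse inclusion I would take any $z\in\Cay$ with $\gamma(z)=\sigma$ and set $u=\sort(z)$, so that $(u,\sigma)=(\sort(z),\gamma(z))=(\identity,z)^T$ and therefore $(u,\sigma)^T=(\identity,z)$. It remains only to check the membership condition $u=\sort(z)\in\WI(\sigma)$, i.e.\ $\Des(\sort(z))\subseteq\Des(\sigma)$. This is automatic: the biword $(\identity,z)$ is a Burge word because its weakly increasing top row has no weak descents, so its transpose $(\sort(z),\sigma)$ is again a Burge word and hence satisfies $\Des(\sort(z))\subseteq\Des(\sigma)$. This yields $\basis(\sigma)=\gamma^{-1}(\sigma)$.

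Finally I would read off the two statements directly from the identity $\basis(\sigma)=\gamma^{-1}(\sigma)$. Using $\gamma(\sigma^{-1})=\sigma$ from Remark~\ref{remark_phi_gen_inv}, the fiber $\gamma^{-1}(\sigma)=\{z\in\Cay:\gamma(z)=\gamma(\sigma^{-1})\}$ is exactly the equivalence class $[\sigma^{-1}]$, which gives the ``moreover'' part $\basis(\sigma)=[\sigma^{-1}]$. For the first statement, given $y\in\Cay$ I set $\sigma=\gamma(y)$; then $\gamma(\sigma^{-1})=\sigma=\gamma(y)$ shows $y\sim\sigma^{-1}$, so $[y]=[\sigma^{-1}]=\basis(\sigma)=\basis(\gamma(y))$. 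The one genuinely delicate point is the obstacle I would flag explicitly: the side condition $\sort(z)\in\WI(\sigma)$ built into the definition of $\basis(\sigma)$ must be shown to be redundant on the fiber of $\gamma$. It is precisely the closure of $\Bur_n$ under transpose, together with $\Des(\identity)=\emptyset$, that forces $\Des(\sort(z))\subseteq\Des(\sigma)$ for every such $z$; without this the two families would only be seen to be equinumerous rather than literally equal as sets.
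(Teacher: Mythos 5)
Your proposal is correct and follows essentially the same route as the paper's proof: both unwind the definition of $\basis(\sigma)$ via the involution property of the Burge transpose, observe that $\Des(\sort(z))\subseteq\Des(\gamma(z))$ holds automatically because transposes of biwords in $\WI_n\times\Cay_n$ are Burge words (so $\sort(z)\in\WI(\sigma)$ by Equation~\eqref{I(pi)}), and finish with $\gamma(\sigma^{-1})=\sigma$. Your fiber formulation $\basis(\sigma)=\gamma^{-1}(\sigma)$ is merely a restatement of the paper's double inclusion $\basis(\gamma(y))=[y]$, since $[y]$ is by definition the fiber of $\gamma$ over $\gamma(y)$.
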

\begin{proof}
  Let $\sigma=\gamma(y)\in\Sym_n$. We will start by showing the inclusion
  $\basis(\sigma) \subseteq [y]$. Let $x\in\basis(\sigma)$. By definition of
  Fishburn basis we have
  \begin{align*}
  (\identity, x)^T &= (\sort(x), \sigma)
  \intertext{and $\sort(x)\in \WI(\sigma)$. On the other hand, by definition of $\gamma$ we have}
  (\identity, x)^T &= (\sort(x), \gamma(x)).
  \end{align*}
  Thus
  $\gamma(x)=\sigma=\gamma(y)$ and $x\in [y]$.
  Conversely, let $x \in [y]$; that is, $\gamma(x) = \sigma$. Then
  $$(\identity,x)^T = (\sort(x),\gamma(x)) = (\sort(x),\sigma).
  $$
  We need to show that $\sort(x) \in I(\sigma)$. Since
  $(\sort(x),\sigma)\in \Bur_n$ we have
  $\Des(\sort(x)) \subseteq \Des(\sigma)$, which in turn is equivalent to
  $\sort(x) \in I(\sigma)$ by Equation~\eqref{I(pi)}. Thus $\basis(\sigma)= [y]$ as
  claimed. Finally, $\gamma(\sigma^{-1})=\Gamma(\identity,\sigma^{-1})=\sigma$,
  and therefore $\sigma^{-1} \in \basis(\sigma)$.
\end{proof}

If $\sigma$ is a permutation, then---by Lemma~\ref{fishburn_basis}---we
can choose $\sigma^{-1}$ as representative for the Fishburn basis of
$\sigma$ and thus we have the following corollary.

\begin{corollary}\label{transport_corollary}
  If $\sigma$ is a permutation, then
  $\gamma(\Sym(\sigma)) = [\Cay][\sigma^{-1}]$.
\end{corollary}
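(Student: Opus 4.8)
The plan is to obtain this as an immediate specialization of the transport theorem (Theorem~\ref{transport_theorem}), so almost no new work is required. The only computation I would record at the outset is the value of $\gamma$ on $\sigma^{-1}$. By Remark~\ref{remark_phi_gen_inv} the map $\gamma\colon\Cay\to\Sym$ restricts on permutations to ordinary inversion, so $\gamma(\sigma^{-1})=(\sigma^{-1})^{-1}=\sigma$. Equivalently, Lemma~\ref{fishburn_basis} guarantees that $\sigma^{-1}\in\basis(\sigma)=[\sigma^{-1}]$, which is exactly the statement that $\sigma^{-1}$ is a legitimate representative of the equivalence class whose $\gamma$-image is $\sigma$; this is the point flagged in the sentence preceding the corollary.

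With this in hand I would simply substitute $y=\sigma^{-1}$ into the equivalent form of the transport theorem, $\gamma\bigl([\Cay][y]\bigr)=\Sym\bigl(\gamma(y)\bigr)$, to get
$$\gamma\bigl([\Cay][\sigma^{-1}]\bigr)=\Sym\bigl(\gamma(\sigma^{-1})\bigr)=\Sym(\sigma).$$
It then remains only to invert this identity. Since $\sim$ is the equivalence relation induced by $\gamma$, the descended map $\gamma\colon[\Cay]\to\Sym$ is a bijection (as established just before Table~\ref{table_equiv_classes}, via $\tilde\gamma([\sigma^{-1}])=\sigma$). Applying the inverse of this bijection to both sides of the displayed equality carries the set $\Sym(\sigma)$ back onto the Cayley permutation class $[\Cay][\sigma^{-1}]$, which is precisely the asserted identity $\gamma(\Sym(\sigma))=[\Cay][\sigma^{-1}]$ once one reads the $\gamma$ on $\Sym(\sigma)$ as this inverse bijection $\Sym\to[\Cay]$.

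I do not expect a genuine obstacle here, since all of the occurrence-level combinatorics has already been discharged in Corollary~\ref{transport_lemma} and Theorem~\ref{transport_theorem}. The one point needing care—rather than difficulty—is keeping the two incarnations of $\gamma$ distinct: the surjection $\gamma\colon\Cay\to\Sym$ and the bijection $\gamma\colon[\Cay]\to\Sym$ it induces. The transport theorem is a statement about the latter, and it is exactly the bijectivity of $\gamma$ on $[\Cay]$ that allows one to pass from the equality of image sets to the equality of the classes themselves, rather than merely an inclusion. Thus the corollary is the clean reformulation of the transport theorem obtained by choosing $\sigma^{-1}$ as the canonical representative of the Fishburn basis $\basis(\sigma)$.
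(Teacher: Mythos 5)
Your proposal is correct and matches the paper's own (implicit) derivation: the paper likewise obtains this corollary by choosing $\sigma^{-1}$ as representative of $\basis(\sigma)=[\sigma^{-1}]$ (Lemma~\ref{fishburn_basis}) and substituting $y=\sigma^{-1}$ into Theorem~\ref{transport_theorem}, using $\gamma(\sigma^{-1})=\sigma$. Your extra care in distinguishing the surjection $\gamma\colon\Cay\to\Sym$ from the induced bijection on $[\Cay]$, and in reading the left-hand $\gamma$ as that bijection's inverse, is a faithful unpacking of the paper's stated abuse of notation rather than a departure from its argument.
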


A remarkable consequence is that the sets $\Sym(\sigma)$ and
$[\Cay][\sigma^{-1}]$ are equinumerous. In fact, we can say a bit
more. By Lemma~\ref{fishburn_basis} and Equation~\eqref{card_of_basis} we
have $|[y]|=2^{\des(\gamma(y))}$, which leads to the following result
relating the Eulerian polynomial on $\Sym_n(\sigma)$ to a polynomial
recording the distribution of (the logarithm of) sizes of equivalence
classes in $[\Cay_n][\sigma^{-1}]$. The special case $t=2$ can be seen
as a generalization of Equation~\eqref{Eulerian-of-2}.

\begin{corollary}
  For any natural number $n$ and permutation $\sigma$,
  $$
  \sum_{\pi\in \Sym_n(\sigma)} t^{\des(\pi)} \,=
  \sum_{[y]\in[\Cay_n][\sigma^{-1}]}t^{\log |[y]|},
  $$
  in which the logarithm is with respect to the base 2.
\end{corollary}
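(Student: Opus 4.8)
The plan is to prove the identity by a single change of summation variable on the right-hand side, using the bijection supplied by Corollary~\ref{transport_corollary}, after rewriting the exponent $\log|[y]|$ as a descent statistic of the associated permutation. The two facts I would combine are a size formula for the equivalence classes and the bijectivity of $\gamma$ between the two indexing sets.

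First I would record the cardinality of each class. By Lemma~\ref{fishburn_basis} we have $[y]=\basis(\gamma(y))$, and by Equation~\eqref{card_of_basis} we have $|\basis(\pi)|=2^{\des(\pi)}$ for every permutation $\pi$. Taking $\pi=\gamma(y)$ and combining these gives $|[y]|=2^{\des(\gamma(y))}$, so that, with logarithms to base $2$,
$$\log|[y]|=\des(\gamma(y))$$
for every $y\in\Cay$. Thus the exponent in the right-hand sum is nothing but the descent number of the permutation $\gamma(y)$ attached to the class $[y]$.

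Next I would invoke the bijection. Since $\gamma\colon[\Cay]\to\Sym$ is a length-preserving bijection and, by Corollary~\ref{transport_corollary}, it carries the avoidance set $[\Cay_n][\sigma^{-1}]$ onto $\Sym_n(\sigma)$, the assignment $[y]\mapsto\gamma(y)$ is a bijection from $[\Cay_n][\sigma^{-1}]$ to $\Sym_n(\sigma)$. Performing the substitution $\pi=\gamma(y)$ in the right-hand side and replacing $\log|[y]|$ by $\des(\pi)$ turns
$$\sum_{[y]\in[\Cay_n][\sigma^{-1}]}t^{\log|[y]|}\quad\text{into}\quad\sum_{\pi\in\Sym_n(\sigma)}t^{\des(\pi)},$$
which is exactly the left-hand side.

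No deep obstacle arises: the statement repackages Lemma~\ref{fishburn_basis}, Equation~\eqref{card_of_basis}, and Corollary~\ref{transport_corollary}. The only point deserving a moment's care is that the bijection $\gamma$ must respect both the length $n$ and the avoidance condition simultaneously, so that it genuinely restricts to a bijection between the two equinumerous sets indexing the sums; this is already guaranteed by the length-preservation of the Burge transpose together with Corollary~\ref{transport_corollary}. Setting $t=2$ then recovers the $\sigma$-avoiding refinement of Equation~\eqref{Eulerian-of-2}, as the text anticipates.
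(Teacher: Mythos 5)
Your proposal is correct and follows exactly the paper's own (one-line) justification: the identity $|[y]|=2^{\des(\gamma(y))}$ obtained from Lemma~\ref{fishburn_basis} together with Equation~\eqref{card_of_basis}, combined with the bijection $[y]\mapsto\gamma(y)$ between $[\Cay_n][\sigma^{-1}]$ and $\Sym_n(\sigma)$ furnished by Corollary~\ref{transport_corollary}. The paper treats this as immediate from those two facts, and your write-up is just a careful expansion of that same argument.
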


We end this section by providing some results on
the equivalence relation $\sim$ introduced in this section.
First we show that $\sim$ does not depend on our choice
of $\iota$ as Burge labeling in the definition 
of $\gamma=\Gamma_{\iota}$.

\begin{lemma}
  Let $(u,x)$ and $(u,y)$ be Burge words that have the
  same top row. If $x\sim y$, then $\Gamma(u,x) = \Gamma(u,y)$.
\end{lemma}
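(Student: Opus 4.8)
The plan is to isolate and prove a single clean identity, namely that
$$\Gamma(u,x) = u\circ\gamma(x)$$
for every Burge word $(u,x)\in\Bur_n$. This is the exact companion of Lemma~\ref{sort_is_composition}: that lemma reads off the \emph{top} row of $(\identity,x)^T$ as $\sort(x)=x\circ\gamma(x)$, and the proposed identity reads off the \emph{bottom} row of $(u,x)^T$ as $u\circ\gamma(x)$. Once this is in hand the lemma is immediate: applying it to both $(u,x)$ and $(u,y)$ gives $\Gamma(u,x)=u\circ\gamma(x)$ and $\Gamma(u,y)=u\circ\gamma(y)$, and since $x\sim y$ means precisely $\gamma(x)=\gamma(y)$, the two right-hand sides coincide. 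So the whole content of the statement is concentrated in the identity, and in particular the shared top row $u$ is what lets the two computations agree.

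To prove the identity I would unwind the definition of the Burge transpose applied to $(u,x)$. Set $\pi=\gamma(x)$, so that $\sort(x)=x\circ\pi$ by Lemma~\ref{sort_is_composition}. Flipping each column $\binom{u(i)}{x(i)}$ of $(u,x)$ produces the column $\binom{x(i)}{u(i)}$; I then list these flipped columns in the order $i=\pi(1),\pi(2),\dots,\pi(n)$, so that the column occupying slot $j$ is $\binom{x(\pi(j))}{u(\pi(j))}=\binom{\sort(x)(j)}{u(\pi(j))}$. The task is to verify that this particular listing already respects the order prescribed by $T$ (top entries weakly increasing, ties broken by bottom entries weakly decreasing). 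If so, then the sorted biword is $(\sort(x),\,u\circ\pi)$, the top row agrees with the known value $\sort(x)$, and we read off $\Gamma(u,x)=u\circ\pi=u\circ\gamma(x)$, as wanted.

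The primary key is free: the top entries $\sort(x)(j)$ are weakly increasing in $j$ by definition of $\sort$. The substance is in the tie-breaking, and this is the step I expect to require the most care. Fix a maximal run $j\in\{a,\dots,b\}$ on which $\sort(x)$ is constant; these slots correspond exactly to the block of positions $i$ sharing a common value $x(i)$, and the tie-breaking convention built into the definition of $\gamma$ lists those positions in strictly decreasing order, $\pi(a)>\pi(a+1)>\cdots>\pi(b)$. Because $u$ is weakly increasing, evaluating it along this decreasing sequence of positions yields $u(\pi(a))\ge u(\pi(a+1))\ge\cdots\ge u(\pi(b))$, so the bottom entries are weakly decreasing along the run---precisely the secondary sorting requirement. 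The one point genuinely deserving attention is this interaction between the descending order of positions within a value block of $x$ (coming from how $\gamma$ breaks ties) and the weak monotonicity of the weakly increasing word $u$; once it is checked, the listing is confirmed to be sorted, the identity follows, and the lemma drops out immediately.
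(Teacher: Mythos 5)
Your proof is correct and takes essentially the same route as the paper: the paper likewise reduces the lemma to the identity $\Gamma(u,x)=u\circ\pi$ with $\pi=\gamma(x)=\gamma(y)$, derived via Lemma~\ref{sort_is_composition}. The only difference is one of detail---the paper asserts that identity with a terse ``it follows,'' whereas you spell out the tie-breaking verification (positions within a value block of $x$ appear in decreasing order in $\pi$, so the weakly increasing $u$ yields weakly decreasing bottom entries) that justifies it.
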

\begin{proof}
  Let $\pi=\gamma(x)=\gamma(y)$. Note that $\pi\in S_n$.  Let
  $u\in \WI_n$. By definition of $\Gamma$ we have
  $(u,x)^T = (\sort(x), \Gamma(u,x))$. Moreover, by
  Lemma~\ref{sort_is_composition}, $\sort(x) = x\circ \pi$ and hence
  $$
  (u, x)^T = \bigl( x\circ \pi, \Gamma(u,x)\bigr).
  $$
  It follows that $\Gamma(u,x) = u \circ \pi$. Similarly,
  $\Gamma(u,y) = u \circ \pi$, concluding the proof.
\end{proof}

Our next goal is to prove that the pattern containment
relation is a partial order on the set $[\Cay]$.

\begin{lemma}\label{equivalence_definition}
  Let $x,y \in \Cay$. The following two statements are equivalent:
  \begin{enumerate}
  \item $[x]\ge [y]$.
  \item For each $x' \in [x]$, there exists $y' \in [y]$ such that
    $x' \ge y'$.
  \end{enumerate}
\end{lemma}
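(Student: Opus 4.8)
The plan is to prove the two implications separately, with essentially all of the work residing in $(1)\Rightarrow(2)$.

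The implication $(2)\Rightarrow(1)$ is immediate. Statement (2) asserts something for \emph{every} $x'\in[x]$; specializing to the representative $x'=x$ (which lies in $[x]$ since $\sim$ is reflexive) produces a $y'\in[y]$ with $x\ge y'$. This is exactly the definition of $[x]\ge[y]$, so (1) holds.

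For the reverse implication $(1)\Rightarrow(2)$, I would first translate the hypothesis from the quotient set down to the level of permutations using the transport theorem (Theorem~\ref{transport_theorem}): the assumption $[x]\ge[y]$ yields $\gamma(x)\ge\gamma(y)$. Now fix an arbitrary $x'\in[x]$. Since $x'\sim x$, by definition of the equivalence relation we have $\gamma(x')=\gamma(x)$, and therefore $\gamma(y)\le\gamma(x)=\gamma(x')$. Because $\gamma(y)$ is a permutation, the second item of Corollary~\ref{transport_lemma}, applied with the Cayley permutation $x'$ and the pattern $\sigma=\gamma(y)$, produces a Cayley permutation $y'$ with $y'\le x'$ and $\gamma(y')=\gamma(y)$. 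The equality $\gamma(y')=\gamma(y)$ says precisely that $y'\sim y$, i.e. $y'\in[y]$, so we have exhibited the desired $y'\in[y]$ with $x'\ge y'$.

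The key structural point—and the only place where content really enters—is that equivalence classes are the fibres of $\gamma$, so membership $y'\in[y]$ is equivalent to the single equation $\gamma(y')=\gamma(y)$. This is what lets Corollary~\ref{transport_lemma}(2) do all the heavy lifting: it not only pulls an occurrence of the pattern $\gamma(y)$ back from $\gamma(x')$ to $x'$, but also guarantees that the resulting preimage lands in the correct fibre. I do not expect a genuine obstacle here; the lemma is in effect a repackaging of Corollary~\ref{transport_lemma} through the transport theorem, and the only care required is to notice that the universal quantifier in (2) ranges over \emph{all} representatives $x'$, each of which is handled uniformly by the same argument since the hypothesis $\gamma(x')=\gamma(x)\ge\gamma(y)$ does not depend on the choice of $x'$.
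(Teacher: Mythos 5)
Your proof is correct and follows essentially the same route as the paper's: both hinge on Corollary~\ref{transport_lemma}(2) to pull the pattern $\gamma(y)$ back from $\gamma(x')$ into the fibre $[y]$, uniformly for every representative $x'$, and the converse direction is handled by specializing to $x'=x$. The only cosmetic difference is that you invoke Theorem~\ref{transport_theorem} to obtain $\gamma(x)\ge\gamma(y)$, whereas the paper applies Corollary~\ref{transport_lemma}(1) directly to a witnessing pair $x'\ge y'$; since the transport theorem is proved before this lemma and independently of it, this substitution introduces no circularity.
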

\begin{proof}
  Suppose that $[x]\ge [y]$. That is, there are two Cayley permutations
  $x' \in [x]$ and $y' \in [y]$ such that $x' \ge y'$. By
  Lemma~\ref{transport_lemma}, we have $\gamma(x') \ge \gamma(y')$.  Let
  $\bar{x} \in [x]$. Then $\gamma(\bar{x})=\gamma(x') \ge
  \gamma(y')$. Thus, again by Lemma~\ref{transport_lemma}, there exists
  $\bar{y} \in [y']=[y]$ such that $\bar{x} \ge \bar{y}$, as
  desired. The other implication is trivial.
\end{proof}

\begin{prop}
  The containment relation is a partial order on $[\Cay]$.
\end{prop}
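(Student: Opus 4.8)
The plan is to reduce every axiom of a partial order on $[\Cay]$ to the corresponding property of classical containment on $\Sym$, using the transport theorem (Theorem~\ref{transport_theorem}) as the sole engine. Recall that the theorem states $[x]\ge[y]\iff\gamma(x)\ge\gamma(y)$, and that $\gamma:[\Cay]\to\Sym$ is a bijection. Since containment is already known to be a partial order on $\Sym$, each of reflexivity, transitivity, and antisymmetry can be pulled back across this equivalence with essentially no extra work.

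For reflexivity, I would observe that $\gamma(x)\ge\gamma(x)$ holds trivially for every $x\in\Cay$, so Theorem~\ref{transport_theorem} immediately gives $[x]\ge[x]$. For transitivity, I would start from $[x]\ge[y]$ and $[y]\ge[z]$, apply the transport theorem to both hypotheses to obtain $\gamma(x)\ge\gamma(y)$ and $\gamma(y)\ge\gamma(z)$, invoke transitivity of containment on $\Sym$ to conclude $\gamma(x)\ge\gamma(z)$, and then translate back via the theorem to get $[x]\ge[z]$.

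The only step with any genuine content is antisymmetry, and it is where the definition of the equivalence relation $\sim$ is used in an essential way. Assuming $[x]\ge[y]$ and $[y]\ge[x]$, the transport theorem yields $\gamma(x)\ge\gamma(y)$ and $\gamma(y)\ge\gamma(x)$. Antisymmetry of containment on $\Sym$ then forces $\gamma(x)=\gamma(y)$. By the very definition of $\sim$ (namely $y\sim y'\iff\gamma(y)=\gamma(y')$), this equality says exactly that $x\sim y$, i.e.\ $[x]=[y]$. In other words, antisymmetry on $[\Cay]$ holds precisely because $\gamma$ separates distinct equivalence classes, which is the same fact that makes $\tilde\gamma$ a bijection.

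I expect no real obstacle here: the difficulty has already been absorbed into the proof of Theorem~\ref{transport_theorem}, and this proposition is a clean corollary. The one point worth stating carefully is that antisymmetry is not merely inherited but relies on the defining property of $\sim$, so it is worth spelling out rather than dismissing as routine.
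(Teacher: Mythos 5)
Your proof is correct, and there is no circularity: Theorem~\ref{transport_theorem} is proved before this proposition in the paper, so you may freely invoke it. However, your route is genuinely different from the paper's. You pull the partial-order axioms back along $\gamma$: the transport theorem says that the order on $[\Cay]$ is exactly the preimage of containment on $\Sym$ under the induced map $\tilde\gamma$, and since the definition of $\sim$ makes $\gamma(x)=\gamma(y)$ equivalent to $[x]=[y]$, all three axioms transfer formally --- in effect you are observing that a partial order pulled back along an injection is a partial order. The paper instead argues directly inside $\Cay$: for transitivity it uses Lemma~\ref{equivalence_definition} to chain concrete representatives $x'\ge y'\ge z'$, and for antisymmetry it notes that $[x]\ge[y]\ge[x]$ produces $x'\ge y'\ge x''$ with $x',x''\in[x]$, forcing all three words to have equal length and hence (being order isomorphic Cayley permutations of the same length) to coincide. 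Your argument is shorter and more conceptual, exhibiting the proposition as a formal corollary of the transport theorem and of the injectivity of $\tilde\gamma$; the paper's is more elementary, resting only on Corollary~\ref{transport_lemma} (through Lemma~\ref{equivalence_definition}) and a length argument, and in particular it never needs antisymmetry of containment on $\Sym$ nor the transport theorem itself.
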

\begin{proof}
  Reflexivity is trivial. To show transitivity, suppose that
  $[x] \ge [y]$ and $[y] \ge [z]$. Then there are $x' \in [x]$ and
  $y' \in [y]$ such that $x' \ge y'$. Further, since $[y] \ge [z]$ and
  $y' \in [y]$ there is $z' \in [z]$ such that $y' \ge z'$ by
  Lemma~\ref{equivalence_definition}. Thus $x' \ge z'$ and
  $[x] \ge [z]$.  It remains to show antisymmetry. If $[x] \ge [y]$ and
  $[y] \ge [x]$, then, as in the proof of transitivity, there are three
  elements $x'\in [x]$, $y'\in [y]$ and $x''\in[x]$ such that
  $x' \ge y' \ge x''$. But then $x'=x''$, since Cayley permutations in
  the same equivalence class have the same length. Thus $x'=y'=x''$ and
  $[x]=[y]$.
\end{proof}

\section{Transport of patterns from $\F$ to $\Modasc$}\label{Section_transport_F_Modasc}

Theorem~\ref{transport_theorem} can be specialized by choosing a
representative in each equivalence class of $[\Cay]$.  Among the
resulting examples, the most significant one is that of transport of
patterns between Fishburn permutations and modified ascent
sequences. Consider the bijection $\psi=\gamma_{|_{\Modasc}}$ of
Remark~\ref{remark_phi_on_modasc}.  If $\sigma$ is a Fishburn
permutation, then $\psi^{-1}(\sigma) \in [\sigma^{-1}]$. In other words,
the map $\psi^{-1}$ picks exactly one representative in the equivalence
class $[\sigma^{-1}]$, see Table~\ref{table_equiv_classes}.  Let
$\sigma$ and $\pi$ be Fishburn permutations.
By Theorem~\ref{transport_theorem}
$$\pi \ge \sigma \;\iff\; [\psi^{-1}(\pi)] \ge [\psi^{-1}(\sigma)] = [\sigma^{-1}].
$$
By Lemma~\ref{equivalence_definition} we therefore have $\pi \ge \sigma$
if and only if $\psi^{-1}(\pi) \ge \sigma'$ for some
$\sigma' \in [\sigma^{-1}]$. Since $\psi: \Modasc\to \F$ is bijective
and $[\sigma^{-1}]=\basis(\sigma)$ is the Fishburn basis of $\sigma$ we
obtain the promised transport theorem.

\begin{theorem}[Transport of patterns from $\F$ to $\Modasc$]\label{transport_theorem_modasc_fish}
  For any permutation $\sigma$ and Cayley permutation $y$ we have
  $$
  \F(\sigma) = \gamma\bigl(\Modasc[\sigma^{-1}]\bigr)\quad\text{and}\quad
  \gamma\bigl(\Modasc[y]\bigr) = \F\bigl(\gamma(y)\bigr)
  $$
  In other words, the set $\F(\sigma)$ of Fishburn permutations avoiding
  $\sigma$ is mapped via the bijection $\hatfun^{-1}$ to the set
  $\Modasc(\basis(\sigma))$ of modified ascent sequences avoiding
  all patterns in the Fishburn basis $\basis(\sigma)$.
\end{theorem}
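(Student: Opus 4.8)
The plan is to obtain this theorem as a specialization of the transport theorem (Theorem~\ref{transport_theorem}), feeding in the two facts already available on the $\Modasc$ side: that $\gamma$ restricted to $\Modasc$ is the bijection $\psi$ onto $\F$ (Remark~\ref{remark_phi_on_modasc}), and that an equivalence class $[y]$ coincides with the Fishburn basis $\basis(\gamma(y))$ (Lemma~\ref{fishburn_basis}). First I would note that the two displayed identities are really one. Since $\gamma(\sigma^{-1})=\sigma$ by Lemma~\ref{fishburn_basis}, the identity $\F(\sigma)=\gamma(\Modasc[\sigma^{-1}])$ is exactly the case $y=\sigma^{-1}$ of $\gamma(\Modasc[y])=\F(\gamma(y))$; so it suffices to prove the latter for an arbitrary Cayley permutation $y$.

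Writing $\sigma=\gamma(y)\in\Sym$ and fixing $x\in\Modasc$, the heart of the argument is the chain of equivalences: $x\in\Modasc[y]$ iff $x$ avoids every pattern of $[y]$, iff $[x]\not\ge[y]$, iff $\gamma(x)\not\ge\sigma$, iff $\gamma(x)\in\F(\sigma)$. The middle equivalence is the one I would spell out carefully. ``$x$ contains some $y'\in[y]$'' is, by the definition of the order on $[\Cay]$ together with $x\in[x]$, the same as $[x]\ge[y]$; conversely $[x]\ge[y]$ forces $x\ge y'$ for some $y'\in[y]$ by Lemma~\ref{equivalence_definition} applied to the representative $x$. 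Negating gives the claimed equivalence between avoiding the whole class $[y]$ and $[x]\not\ge[y]$. The next equivalence is Theorem~\ref{transport_theorem}, and the last uses that $\gamma(x)=\psi(x)$ already lies in $\F$ (Remark~\ref{remark_phi_on_modasc}), so that $\gamma(x)\not\ge\sigma$ is precisely $\gamma(x)\in\F(\sigma)$.

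To finish, since $\psi=\gamma|_{\Modasc}$ is a bijection from $\Modasc$ onto $\F$, the displayed equivalence shows that $\gamma$ carries $\Modasc[y]$ bijectively onto $\F(\gamma(y))$, which is the second identity (and hence the first). The ``in other words'' reformulation then follows by applying $\psi^{-1}=\hatfun^{-1}$ and rewriting $[\sigma^{-1}]=\basis(\sigma)$ via Lemma~\ref{fishburn_basis}. I do not expect a genuinely hard step here: the real content has been front-loaded into Theorem~\ref{transport_theorem} and Lemmas~\ref{fishburn_basis} and~\ref{equivalence_definition}. The single point that demands attention is the passage from ``$x$ avoids the entire basis $[y]$'' to ``$\gamma(x)$ avoids the single permutation $\sigma$'': one must check that it is legitimate to test containment of the whole class $[y]$ against the fixed representative $x$ rather than against some unspecified member of $[x]$, and this is exactly what Lemma~\ref{equivalence_definition} secures.
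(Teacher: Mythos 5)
Your proposal is correct and follows essentially the same route as the paper: specialize Theorem~\ref{transport_theorem} using the bijection $\psi=\gamma_{|_{\Modasc}}$ of Remark~\ref{remark_phi_on_modasc}, invoke Lemma~\ref{equivalence_definition} to test containment of the class $[y]$ against the fixed representative, and finish with Lemma~\ref{fishburn_basis} to identify $[\sigma^{-1}]$ with $\basis(\sigma)$. If anything, your version is slightly cleaner than the paper's sketch, which phrases the argument only for Fishburn $\sigma$ (via $\psi^{-1}(\sigma)\in[\sigma^{-1}]$), whereas your reduction of the first identity to the case $y=\sigma^{-1}$ handles an arbitrary permutation $\sigma$ directly.
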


\begin{corollary}
  For any permutation $\sigma$ we have
  $|\F_n(\sigma)| = |\Modasc_n(\basis(\sigma))|$.
\end{corollary}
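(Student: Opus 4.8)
The plan is to read the corollary off directly from the set identity in Theorem~\ref{transport_theorem_modasc_fish}. That theorem supplies $\F(\sigma) = \gamma\bigl(\Modasc[\sigma^{-1}]\bigr)$, and by Lemma~\ref{fishburn_basis} we have $[\sigma^{-1}] = \basis(\sigma)$, so the right-hand side is precisely $\gamma\bigl(\Modasc(\basis(\sigma))\bigr)$. Writing $\psi = \gamma_{|_{\Modasc}}$ as in Remark~\ref{remark_phi_on_modasc}, this reads $\F(\sigma) = \psi\bigl(\Modasc(\basis(\sigma))\bigr)$. So the entire enumerative content has already been packaged into the transport theorem, and all that remains is to pass from a statement about sets to a statement about cardinalities length by length.

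Next I would invoke the two facts about $\psi$ established earlier: it is a bijection from $\Modasc$ onto $\F$, and it is length-preserving, sending $\Modasc_n$ onto $\F_n$ (both from the construction in Section~\ref{Section_fishburn_perm} and Remark~\ref{remark_phi_on_modasc}). Consequently, intersecting the identity $\F(\sigma) = \psi\bigl(\Modasc(\basis(\sigma))\bigr)$ with a fixed length $n$ is compatible with $\psi$. On the domain side, $\Modasc(\basis(\sigma)) \cap \Modasc_n = \Modasc_n(\basis(\sigma))$, while on the image side length-preservation gives $\psi\bigl(\Modasc_n(\basis(\sigma))\bigr) = \psi\bigl(\Modasc(\basis(\sigma))\bigr) \cap \F_n = \F(\sigma) \cap \F_n = \F_n(\sigma)$. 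Hence $\psi$ restricts to a bijection $\Modasc_n(\basis(\sigma)) \to \F_n(\sigma)$, and taking cardinalities yields $|\Modasc_n(\basis(\sigma))| = |\F_n(\sigma)|$, which is exactly the claim.

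There is essentially no obstacle in this argument; it is a routine restriction of a bijection. The single point worth stating explicitly is that $\psi$ (equivalently $\hatfun$) is length-preserving, since that is what makes the length-$n$ truncations on the two sides correspond and what lets one conclude equinumerosity of each graded piece rather than merely of the ungraded sets. If I wanted to avoid even mentioning length-preservation, I could instead cite the second identity of Theorem~\ref{transport_theorem_modasc_fish}, $\gamma\bigl(\Modasc[y]\bigr) = \F(\gamma(y))$, together with injectivity of $\gamma$ on $\Modasc$, to set up the bijection abstractly; but the cleanest route is simply to observe that a length-preserving bijection restricts to a length-preserving bijection between the avoidance subsets identified by the transport theorem.
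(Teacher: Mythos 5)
Your argument is correct and matches the paper's intent exactly: the corollary is stated there without proof precisely because it follows, as you show, from the set identity of Theorem~\ref{transport_theorem_modasc_fish} together with the fact that $\psi=\gamma_{|_{\Modasc}}$ is a length-preserving bijection onto $\F$. Your explicit verification that the restriction to length $n$ is compatible on both sides is the only (routine) detail the paper leaves implicit.
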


Recall that a constructive procedure for determining the Fishburn basis
$\basis(\sigma)=[\sigma^{-1}]$ was described at the end of
Section~\ref{Section_Burge_words}.
In Table~\ref{table_equiv_classes} we listed all the equivalence
classes $[y]$ and the corresponding image $\gamma(y)$ up to length four.
In each case, as stated in Theorem~\ref{transport_theorem_modasc_fish},
we have $\gamma\bigl(\Modasc[y]\bigr)=\F\bigl(\gamma(y)\bigr)$.
Equivalently, if $\sigma=\gamma(y)$ then $[y]=[\sigma^{-1}]$
and $\gamma\bigl(\Modasc[\sigma^{-1}]\bigr)=\F(\sigma)$.

Theorem~\ref{transport_theorem_modasc_fish} can be easily generalized to
Fishburn permutations avoiding a set of patterns:

\begin{theorem}\label{transport_theorem_sets_of_patterns}
Let $\Omega$ be a set of permutation patterns. Then
$$
\F(\Omega) = \gamma\bigl(\Modasc[\Omega^{-1}]\bigr),
\text{ where }
[\Omega^{-1}] = \bigcup_{\sigma \in \Omega} [\sigma^{-1}].
$$
\end{theorem}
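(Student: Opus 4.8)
The plan is to deduce this set-version directly from the single-pattern transport theorem (Theorem~\ref{transport_theorem_modasc_fish}), combined with two elementary, purely set-theoretic facts: avoiding a union of patterns is the same as lying in the intersection of the individual avoidance sets, and an injective map commutes with intersections. Since $\gamma$ restricted to $\Modasc$ is the bijection $\psi$ of Remark~\ref{remark_phi_on_modasc}, both facts will be at our disposal.

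First I would unwind the left-hand side of the claimed identity. By definition, a modified ascent sequence avoids every pattern in $[\Omega^{-1}]=\bigcup_{\sigma\in\Omega}[\sigma^{-1}]$ precisely when it avoids every pattern in each individual Fishburn basis $[\sigma^{-1}]$; hence
$$\Modasc[\Omega^{-1}] = \bigcap_{\sigma\in\Omega}\Modasc[\sigma^{-1}].$$
Next, because $\gamma_{|_{\Modasc}}=\psi$ is a bijection, and in particular injective, it commutes with intersections of subsets of its domain, so that
$$\gamma\bigl(\Modasc[\Omega^{-1}]\bigr) = \bigcap_{\sigma\in\Omega}\gamma\bigl(\Modasc[\sigma^{-1}]\bigr).$$
Finally I would apply Theorem~\ref{transport_theorem_modasc_fish} term by term, rewriting each factor as $\gamma(\Modasc[\sigma^{-1}]) = \F(\sigma)$, and use that $\F(\Omega)=\bigcap_{\sigma\in\Omega}\F(\sigma)$ holds by the definition of avoiding a set of patterns. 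This yields $\gamma(\Modasc[\Omega^{-1}]) = \bigcap_{\sigma\in\Omega}\F(\sigma) = \F(\Omega)$, as required.

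There is essentially no obstacle here, since the substantive content is entirely absorbed into the single-pattern case. The only points deserving a word of care are that $\Modasc[\Omega^{-1}]$ must be read as avoidance of the (possibly infinite) pattern set $[\Omega^{-1}]$, each $[\sigma^{-1}]=\basis(\sigma)$ being a finite family of Cayley permutations of length $|\sigma|$; and that the exchange of $\gamma$ with the intersection relies on \emph{injectivity}, not mere surjectivity. This is precisely why one works with the bijection $\psi$ rather than the many-to-one map $\gamma:\Cay\to\Sym$, and it is what makes the argument go through cleanly.
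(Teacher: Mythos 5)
Your proof is correct, and it matches the paper's intent: the paper states this theorem without proof, presenting it as an easy generalization of Theorem~\ref{transport_theorem_modasc_fish}, and your argument (rewriting $\Modasc[\Omega^{-1}]$ as $\bigcap_{\sigma\in\Omega}\Modasc[\sigma^{-1}]$, pushing the injective map $\gamma_{|_{\Modasc}}=\psi$ through the intersection, and applying the single-pattern theorem factor by factor) is exactly the routine deduction being alluded to. The two points you flag --- that injectivity, not surjectivity, is what lets $\gamma$ commute with intersections, and that $[\Omega^{-1}]$ is a union of finite bases --- are the right ones to make the omitted argument airtight.
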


\section{Examples}\label{Section_Examples}

Theorems~\ref{transport_theorem_modasc_fish} provides a bijection
between the set $\F(\sigma)$ of Fishburn permutations avoiding $\sigma$
and the set $\Modasc(\basis(\sigma))$ of modified ascent sequences
avoiding all the patterns in the Fishburn basis $\basis(\sigma)$.
A construction for $\basis(\sigma)$ has been explicitly described
at the end of Section~\ref{Section_Burge_words}.
In this section we collect many examples where this approach can be pushed
further by interpreting this correspondence in terms of (plain)
ascent sequences. As a corollary of the same framework we obtain, in
Section~\ref{subsection_RGF},  a transport theorem for set
partitions encoded as restricted growth functions, and the set of
permutations avoiding the vincular pattern $\dashpatt$.

Let $\Omega$ be a set of patterns and define
$$A_{\Omega}=\bigl\{ x\in\Ascseq: \hat{x}\in\Modasc[\Omega^{-1}]\bigr\}.
$$
The following diagram illustrates the bijections
described in Figure~\ref{hat_diagram} and
Theorem~\ref{transport_theorem_sets_of_patterns}.
$$
\begin{tikzpicture}
  \matrix (m) [matrix of math nodes,row sep=3.5em,column sep=7em,minimum width=2em]
  {
    A_{\Omega} & \F(\Omega)  \\
    \Modasc[\Omega^{-1}] &     \\
  };
  \path[-stealth, semithick]
  (m-1-1) edge node [above, yshift=2pt] {$\fun$}
  (m-1-2)
  (m-1-1) edge node [below, xshift=-22pt, yshift=10pt]{$x \mapsto \hatx$}
  (m-2-1)
  (m-2-1) edge node [right, xshift=-3pt, yshift=-10pt] {$\gamma=\hatfun$} (m-1-2);
\end{tikzpicture}
$$
Below we provide several cases where the set $A_{\Omega}$ can 
be described in terms of pattern avoidance, that is, where
$A_{\Omega}=\Ascseq(\cbasis(\Omega))$ for a set of Cayley
permutations $\cbasis(\Omega)$. In these cases, we obtain a transport
theorem that links ascent sequences directly to Fishburn permutations:
$$
\fun\bigl(\Ascseq(\cbasis(\Omega))\bigr)=\F(\Omega).
$$
An overview of some of the results obtained this way can be found
in Table~\ref{table_examples_transport}. In what follows we just
sketch some of the ideas and proofs, leaving the details to the reader.

\begin{table}
$$
\begin{array}{c|c|c}
\Omega & \cbasis(\Omega) & \text{Counting Sequence} \\
\hline
21 & 11 & 1,1,\dots \\
12 & 12 & 1,1,\dots \\
\hline
213 & 112 &  2^{n-1} \\
312 & 121 & 2^{n-1} \\
132 & 122 & 2^{n-1} \\
123 & 123 & 2^{n-1} \\
231 & 212 &  \text{Catalan} \\
\hline
3142 & 1212 & \text{Catalan} \\
2134 & 1123 & \text{Catalan} \\
\hline
1423 & 132 & \text{Catalan} \\
3412 & 312 & A202062 \\
\hline
231,4132 & 212,221 & \text{Odd indexed Fibonacci}\\
231,4123 & 212,231 & A116703 \\
231,4312 & 212,211,321 & \text{Odd indexed Fibonacci}\\
\end{array}
$$
\caption{Sets of patterns $\Omega$ and $\cbasis(\Omega)$ such that
$\F(\Omega)=\fun\bigl(\Ascseq(\cbasis(\Omega))\bigr)$.}\label{table_examples_transport}
\end{table}

\subsection{Transport of a single pattern}\label{subs_single_pattern}
 
When $\Omega=\lbrace \sigma \rbrace$ is a singleton it is
sometimes possible to show that there is a pattern $y\in\Cay$
such that $A_{\lbrace \sigma\rbrace}=\Ascseq(y)$.
Equivalently, $x\in\Ascseq(y)$ if and only if $\hat{x}\in\Modasc[\sigma^{-1}]$.
As a result we get the transport of a single pattern:
$$
\fun\bigl(\Ascseq(y)\bigr)=\F(\sigma).
$$
Below we show the case $\sigma=2134$, the other cases being similar.

\begin{example*}
Let $\sigma=2134$. Observe that $[2134^{-1}]=\lbrace 1123,2134\rbrace$.
We shall prove that $A_{\lbrace 2134\rbrace}=\Ascseq(1123)$,
that is, $x\in\Ascseq(1123)$ if and only if
$\hat{x}\in\Modasc[2134^{-1}]=\Modasc(1123,2134)$.
We first prove that $\Modasc(1123,2134)=\Modasc(1123)$.

\begin{lemma}
We have $\Modasc(1123,2134)=\Modasc(1123)$.
\end{lemma}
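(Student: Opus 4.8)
The inclusion $\Modasc(1123,2134)\subseteq\Modasc(1123)$ is immediate, since avoiding more patterns is a stronger condition. Hence the entire content of the lemma is the reverse inclusion $\Modasc(1123)\subseteq\Modasc(1123,2134)$, which is the contrapositive of the implication: \emph{if a modified ascent sequence $x$ contains $2134$, then it also contains $1123$}. The plan is to prove this implication directly. Fix an occurrence of $2134$ in $x$ at positions $i_1<i_2<i_3<i_4$, so that $x(i_2)<x(i_1)<x(i_3)<x(i_4)$. The idea is to keep the increasing pair $x(i_3)<x(i_4)$ as the top ``$23$'' of the target pattern, and to manufacture the bottom ``$11$'' by exhibiting two equal entries, both strictly below $x(i_3)$ and both lying to the left of $i_3$.

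To locate such a repeated value I would run a descending walk starting at position $i_2$, with Lemma~\ref{lemma_ascent_tops} as the driving tool. At a current position $t$, if $x(t)$ is \emph{not} the leftmost occurrence of its value, the walk halts: its earlier occurrence together with $t$ gives two equal entries. If instead $x(t)$ is a leftmost occurrence and $x(t)>1$, then Lemma~\ref{lemma_ascent_tops} forces $x(t-1)<x(t)$, and I step left to $t-1$, whose value is strictly smaller. Since the visited values start at $x(i_2)$ and strictly decrease while staying positive, the walk terminates; moreover it only visits positions $\le i_2<i_3$, so any repetition it detects has both copies to the left of $i_3$ and common value at most $x(i_2)<x(i_3)$, exactly as required.

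The step needing care---and the only real obstacle---is ruling out the branch in which the walk never meets a non-leftmost entry and instead descends to a leftmost occurrence of the value $1$. Because $x(1)=1$ for every modified ascent sequence (as recorded just after Lemma~\ref{lemma_ascent_tops}), a leftmost occurrence of $1$ can only sit in position $1$; so in this degenerate branch the walk would traverse the consecutive positions $i_2,i_2-1,\dots,1$ with strictly decreasing values, i.e. $x(1)<x(2)<\cdots<x(i_2)$. This strictly increasing prefix is incompatible with $x(i_1)>x(i_2)$ and $i_1<i_2$, a contradiction that eliminates the degenerate branch. The walk therefore halts at a genuine repetition $x(r)=x(t)=:v$ with $r<t\le i_2$ and $v<x(i_3)$, and then $x(r)\,x(t)\,x(i_3)\,x(i_4)\simeq 1123$, completing the implication and hence the lemma. (The sub-case $x(i_2)=1$ needs no separate treatment: there the walk stops at once, since position $1$ already carries the value $1$.)
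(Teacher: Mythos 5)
Your proof is correct and follows essentially the same route as the paper's: arguing by contraposition, it walks left from $i_2$ using Lemma~\ref{lemma_ascent_tops} (leftmost occurrence $\iff$ ascent top) until it meets a non-leftmost entry, whose earlier copy together with $x(i_3)x(i_4)$ yields an occurrence of $1123$. The only difference is bookkeeping: the paper keeps $x(i_1)$ in play so that a $2134$-occurrence is maintained at every step (which implicitly prevents the walk from passing position $i_1$), whereas you discard $x(i_1)$ and instead kill the degenerate descent-to-position-$1$ branch via the increasing-prefix contradiction---both are valid, and yours makes the termination argument more explicit than the paper's ``repeat until'' phrasing.
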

\begin{proof}
  The inclusion $\Modasc(1123,2134)\subseteq\Modasc(1123)$ is trivial.
  To prove the opposite inclusion, we proceed by contraposition. Suppose
  that $\hat{x}\ge 2134$ and let
  $\hat{x}(i_1)\hat{x}(i_2)\hat{x}(i_3)\hat{x}(i_4)$ be an occurrence of
  $2134$ in $\hat{x}$. Let $j$ be the index of the leftmost occurrence
  of the integer $\hat{x}(i_2)$ in $\hat{x}$.  If $j<i_2$, then
  $\hat{x}(j)\hat{x}(i_2)\hat{x}(i_3)\hat{x}(i_4)$ is an occurrence of
  $1123$. Otherwise, if $j=i_2$, then
  $\hat{x}(i_2)$ is an ascent top by Lemma~\ref{lemma_ascent_tops}, that
  is, $\hat{x}(i_2-1)<\hat{x}(i_2)$.  Therefore we can repeat the same
  argument on the occurrence
  $\hat{x}(i_1)\hat{x}(i_2-1)\hat{x}(i_3)\hat{x}(i_4)$ of $2134$ until
  we eventually find an occurrence of $1123$.
\end{proof}

\begin{lemma}
Let $x\in\Ascseq$. Then $x\ge 1123$ if and only if $\hat{x}\ge 1123$.
\end{lemma}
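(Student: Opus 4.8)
The plan is to prove the two implications separately by transporting an occurrence of $1123$ across the map $x\mapsto\hat{x}$, using three facts about this passage: that it only raises entries, so $x(i)\le\hat{x}(i)$ for every $i$ (immediate from the definition of $M$); that it preserves ascents, so $x(i)<x(i+1)$ exactly when $\hat{x}(i)<\hat{x}(i+1)$, and hence the ascent-top positions of $x$ and $\hat{x}$ coincide; and Lemma~\ref{lemma_ascent_tops}, by which a letter of $\hat{x}$ is the leftmost occurrence of its value (when that value exceeds $1$) precisely when it is an ascent top. It is convenient to first reformulate containment: $x\ge 1123$ holds if and only if some value $a$ occurs at least twice and, to the right of its second occurrence, $x$ has a subword $bc$ with $a<b<c$; the identical reformulation applies to $\hat{x}$. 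In either direction one may therefore assume the two equal letters of an occurrence sit at the two leftmost occurrences of their common value, since sliding them left only decreases the position of the second equal letter and leaves the increasing pair to its right untouched.

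For the implication $\hat{x}\ge 1123\Rightarrow x\ge 1123$ I would start from such a normalized occurrence $p_1<p_2<p_3<p_4$ with $\hat{x}(p_1)=\hat{x}(p_2)=k$ and $k<\hat{x}(p_3)<\hat{x}(p_4)$. If $k=1$, the entrywise domination forces $x(p_1)=x(p_2)=1$ as well, and it remains to produce, to the right of $p_2$, an increasing pair of $x$ whose smaller entry is at least $2$; this is extracted from the adjacent ascents of $\hat{x}$ lying in the window $[p_2,p_4]$, which by ascent preservation are adjacent ascents of $x$ in the very same positions, after checking that at least one such ascent has its bottom above $1$. If $k>1$, the second equal letter sits at a non-ascent-top, so $\hat{x}(p_2-1)\ge\hat{x}(p_2)$, and one attempts to pull the pair down to a smaller repeated value and iterate, in the spirit of the proof of the preceding lemma, until the base case $k=1$ applies. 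The reverse implication $x\ge 1123\Rightarrow\hat{x}\ge 1123$ runs along the same lines, now using that the modification keeps the value $a$ of the equal pair as a repeated value of $\hat{x}$ and raises the pair $bc$ to an increasing pair still lying above it.

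The hard part is precisely that the modification neither preserves the relative order of arbitrary entries nor fixes the non-ascent-top letters, both failures being already visible on short examples such as $x=123212$, whose modification is $134312$, so an occurrence of $1123$ cannot simply be carried across position by position. The crux is therefore the normalization together with the passage from a subword ascent $bc$ to a genuine adjacent ascent supplied by ascent preservation; making the ``bottom above $1$'' bookkeeping airtight in the $k=1$ case, and justifying the downward reduction when $k>1$, is where the real care is needed. Should that reduction prove unwieldy, a cleaner fallback is induction on the length via the recursive description of $\Modasc$, in which the appended letter is the same in $x$ and $\hat{x}$ and ascents are preserved; there the same obstacle resurfaces as the need to control, inductively, the occurrences of $112$ lying below a prescribed value.
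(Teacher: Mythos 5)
There is a genuine gap: the two steps on which your argument pivots are precisely the ones you defer, and the first of them is false as stated. Consider $x=11213$, an ascent sequence with $\Asc(x)=(2,4)$; the modification map fixes it, so $\hat{x}=11213$. The normalized occurrence of $1123$ in $\hat{x}$ with $k=1$ sits at $(p_1,p_2,p_3,p_4)=(1,2,3,5)$. The adjacent ascents of $\hat{x}$ in the window $[p_2,p_4]=[2,5]$ are at positions $2$ and $4$, and both have bottom equal to $1$. So the check ``at least one such ascent has its bottom above $1$'' fails and your extraction yields nothing; note that the pair actually witnessing the pattern, the values $2$ and $3$ at positions $3$ and $5$, is not an adjacent ascent at all --- both of its letters are ascent tops. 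Since the $k>1$ case and the converse implication are reduced to, or said to run ``along the same lines'' as, this mechanism, the whole argument rests on an unproved (indeed false) claim. The converse direction also quietly relies on the assertion that the image of the increasing pair $bc$ remains increasing, which is not true position-by-position either: for $x=12123$ one has $x(2)<x(5)$, yet $\hat{x}=14123$ with $\hat{x}(2)>\hat{x}(5)$. You yourself flag these points as ``where the real care is needed'', but that care is exactly the content of the lemma, and nothing in the proposal supplies it.

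For comparison, the paper never tracks occurrences of $1123$ across the modification. It inducts on the length of $x$: if the letter $1$ occurs exactly once, then $x=1y^{+1}$ and $\hat{x}=1\hat{y}^{+1}$ for a shorter ascent sequence $y$, and the inductive hypothesis finishes; if $1$ occurs at least twice, the rightmost $1$ is deleted from $x$ and the corresponding letter from $\hat{x}$ --- using that a $1$ can only ever play the role of one of the two equal letters of $1123$, and another $1$ lies to its left, so this deletion affects neither containment nor avoidance --- and induction applies again. Your fallback paragraph gestures at this route, but you do not carry it out, and it hinges on a compatibility between deletion and the modification map (deleting a letter of $x$ can change the values of other letters of $\hat{x}$) that is delicate and that you would still have to establish.
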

\begin{proof}
We proceed by induction on the length $n$ of $x$. If $n\le 3$
there is nothing to prove. Otherwise, consider the following
two cases:
\begin{itemize}
\item If $x$ contains exactly one occurrence of the integer $1$, then
$x=1y^{+1}$, where $y^{+1}$ is obtained by increasing by one each
entry of some ascent sequence $y$. Similarly, $\hat{x}=1\hat{y}^{+1}$.
By the inductive hypothesis, $y\ge 1123$ if and only if $\hat{y}\ge 1123$.
Thus the same holds for $y^{+1}$ and $\hat{y}^{+1}$, and also for
$x$ and $\hat{x}$, as desired.
\item Suppose that $x$ contains at least two occurrences of the
integer $1$. Let $i$ be the index of the rightmost occurrence of $1$.
Note that $x(i-1)\le x(i)$, and hence the sequence
$y=x(1)\cdots x(i-1)x(i+1)\cdots x(n)$ obtained from $x$ by removing
$x(i)$ is an ascent sequence. In particular, $\hat{y}$ is obtained
by removing the entry $\hat{x}(i)$ from $\hat{x}$. We conclude by
applying the inductive hypothesis to $y$ and $\hat{y}$.\qedhere
\end{itemize}
\end{proof}

As a corollary of the previous two results, we have
$A_{\lbrace 2134\rbrace}=\Ascseq(1123)$ and hence
$$
\fun\bigl(\Ascseq(1123)\bigr)=\F(2134).
$$
In 2011 Duncan and Steingr\'imsson~\cite{DS} conjectured that
$|\Ascseq_n(1123)|=C_n$, the $n$th Catalan number. Three years later
this conjecture was settled in the affirmative by Mansour and
Shattuck~\cite{MS14}. Gil and Weiner~\cite{GW} independently proved that
$|\F_n(2134)|=C_n$ in 2019.
\end{example*}

\begin{example*}
The sets $\F(1423)$ and $\F(3412)$ can be dealt 
with similarly.
For instance, we have $A_{\lbrace 1423\rbrace}=\Ascseq(132)$
and hence
$$
\fun\bigl(\Ascseq(132)\bigr)=\F(1423).
$$
The set $\Ascseq(132)$ has been enumerated in \cite{DS}, whereas
the enumeration of $\F(1423)$ can be found in \cite{GW}. Both
sets are enumerated by the Catalan numbers.
Analogously, $A_{\lbrace 3412\rbrace}=\Ascseq(312)$, which leads to
$$
\fun\bigl(\Ascseq(312)\bigr)=\F(3412).
$$
No formula is currently known for the enumeration of $\Ascseq(312)$ or
$\F(3412)$. It has been conjectured~\cite{CCF} that $312$-sortable
permutations share the same enumeration.
\end{example*}

\subsection{Transport of sets of patterns}

Baxter and Pudwell~\cite{BP} showed that~$\Ascseq(212,221)$ is enumerated
by the odd indexed Fibonacci numbers (sequence A001519 in the OEIS~\cite{Sl}) by
providing a generating tree for this set. They also showed
that~$\Ascseq(212,231)$ and~$\Sym(231,4123)$ admit an isomorphic
generating tree (via the transfer matrix method and using Maple).
An alternative proof of both results can be obtained as a simple
corollary of our framework. Indeed,
$$
\fun\bigl(\Ascseq(212,221)\bigr)=\Sym(231,4132),
$$
which is well known to be enumerated by the odd indexed Fibonacci numbers, and
$$
\fun\bigl(\Ascseq(212,231)\bigr)=\Sym(231,4123).
$$
Note that $\F(231,4132)=\Sym(231,4132)$ and
$\F(231,4123)=\Sym(231,4123)$, since $231$ is the underlying permutation
of $\fishpattern$. Therefore, as outlined in the previous examples
(see also Figure~\ref{diagram_pairs} for the pair $(231,4123)$), to
prove both results it is sufficient to show that
$A_{\lbrace 231,4132\rbrace}=\Ascseq(212,221)$ and
$A_{\lbrace 231,4123\rbrace}=\Ascseq(212,231)$, respectively.
This can be achieved with a simple case by case analysis,
whose details are omitted.

\begin{figure}
$$
\begin{tikzpicture}
  \matrix (m) [matrix of math nodes,row sep=3.5em,column sep=7em,minimum width=2em]
  {
    A_{\lbrace 231,4123\rbrace} & \F(231,4123)  \\
    \Modasc([231^{-1}],[4123^{-1}]) & \\
  };
  \path[-stealth, semithick]
  (m-1-1) edge node [above, xshift=-6pt, yshift=2pt] {$\fun$}
  (m-1-2)
  (m-1-1) edge node [below, xshift=-22pt, yshift=10pt]{$x \mapsto \hat{x}$}
  (m-2-1)
  (m-2-1) edge node [right, xshift=-8pt, yshift=-12pt] {$\gamma$} (m-1-2);
\end{tikzpicture}
$$
\caption{The diagram illustrating the transport of patterns between
$\F(231,4123)=\Sym(231,4123)$ and $A_{231,4123}=\Ascseq(212,231)$.
}\label{diagram_pairs}
\end{figure}

In a similar fashion, an example of transport of a triple of
patterns is the following:
$$
\fun\bigl(\Ascseq(212,211,321)\bigr) = \Sym(231,4312),
$$
where it is easy to derive that $|\Sym_n(231,4312)|$ is an odd indexed
Fibonacci number.

\subsection{$\RGF$ as set of representatives for $[\Cay]$}\label{subsection_RGF}

In Section~\ref{Section_transport_F_Modasc} we specialized
Theorem~\ref{transport_theorem} by choosing $\Modasc$ as set of
representatives for the equivalence classes $[x]\in[\Cay]$ such
that $\gamma(x)\in\F$. The same approach can be replicated
in order to obtain a transport theorem for restricted growth
functions and the set of permutations avoiding the vincular pattern
$\dashpatt=(231,\lbrace 1\rbrace,\lbrace\rbrace)$, which is depicted
in Figure~\ref{bivinc_patt_RGF}. Note that every permutation avoiding
$\dashpatt$ is a Fishburn permutation; in symbols,
$S(\dashpatt)\subseteq\F$.
Below we outline the main ideas and give a couple of examples,
leaving a deeper investigation of this notion of transport for
future work.

A Cayley permutation $x\in\Cay_n$ is a \emph{restricted growth function}
if $x(1)=1$ and $x(i+1)\le \max\lbrace x(1),\dots,x(i)\rbrace+1$,
for each $i=1,2,\dots,n-1$. Let $\RGF_n$ be the set of restricted
growth functions of length $n$ and let $\RGF=\cup_{n\geq 0}\,\RGF_n$.
It is well known that restricted growth
functions encode set partitions in the same manner as Cayley
permutations encode ballots. Therefore the cardinality of $\RGF_n$
is equal to the $n$th Bell number (sequence A000110 in~\cite{Sl}).
Pattern avoidance on $\RGF$ has been extensively studied~\cite{CDDGGPS,JM,Sa}.

Recall from Section~\ref{Section_Burge_words}
that $\basis(\pi)$ is the set of Cayley permutations that
encode ballots whose underlying permutation is $\pi$.
More precisely, let $x\in\basis(\pi)$ and let
$P_x$ be the ballot encoded by $x$, where as usual
$x(i)=j$ if $i\in B_j$. Then $\gamma(x)=\pi$ is the permutation
obtained from $P_x$ by sorting blocks decreasingly and removing
the curly brackets.
For instance, $x=112132341$ is the Cayley permutation that encodes
the ballot $P_x=\lbrace 9,4,2,1\rbrace\lbrace 6,3\rbrace
\lbrace 7,5\rbrace\lbrace 8\rbrace$ and we have
$$
\binom{\identity}{x}^{\!T}=
\binom{1\ 2\ 3\ 4\ 5\ 6\ 7\ 8\ 9}{1\ 1\ 2\ 1\ 3\ 2\ 3\ 4\ 1}^{\!T}=
\binom{1\ 1\ 1\ 1\ 2\ 2\ 3\ 3\ 4}{9\ 4\ 2\ 1\ 6\ 3\ 7\ 5\ 8}=
\binom{\sort(x)}{\gamma(x)}.
$$
Note that $\gamma(x)=942163758$ is the permutation underlying
$P_x$ and $\sort(x)$ is a Burge labeling of $\gamma(x)$.

%OLD
%The following proposition, whose easy proof is omitted,
%characterizes the cases where a ballot is encoded by a
%restricted growth function.
%\begin{prop}\label{prop_RGF_equiv}
%Let $x\in\Cay$ and $\pi=\gamma(x)$. Let
%$\pi=B_1B_2\cdots B_k$ be the decomposition of $\pi$
%into descending runs and let $\ell(i)=|B_i|$. Then the
%following statements are equivalent:
%\begin{enumerate}
%\item $x\in\RGF$,
%\item $\pi\in\Sym(\dashpatt)$,
%\item $\min(B_1)<\min(B_2)<\dots<\min(B_k)$,
%\item $\sort(x)=1^{\ell(1)}\oplus\cdots\oplus 1^{\ell(k)}$,
%\end{enumerate}
%where $1^{\ell(i)}=1\dots 1$ denotes the concatenation of $\ell(i)$ copies of $1$.
%ANOTHER VERSION OF 6.3
%Let $x\in\Cay$ and let $\pi=\gamma(x)$.
%\begin{itemize}
%\item Suppose that $x\in\RGF$ and let $\max(x)=k$. Let $l(i)$ be the number of
%occurences of $i$ in $x$ and let $\pi=\gamma(x)$. Then $\pi\in\Sym(\dashpatt)$
%and $\sort(x)=1^{\ell(1)}\oplus\cdots\oplus 1^{\ell(k)}$,
%where $1^{\ell(i)}=1\dots 1$ denotes the concatenation
%of $\ell(i)$ copies of $1$.
%\item Suppose that $\pi\in\Sym(\dashpatt)$. Let $\pi=B_1B_2\cdots B_k$
%be the decomposition of $\pi$ into descending runs and let $\ell(i)=|B_i|$.
%Let $u=1^{\ell(1)}\oplus\cdots\oplus 1^{\ell(k)}$ and let $x=\Gamma(u,\pi)$.
%Then $x\in\RGF$ and $\max(x)=k$.
%\end{itemize}
%As a consequence, the map $\gamma$ is injective on $\RGF$
%and $\gamma(\RGF)=\Sym(\dashpatt)$.
%\end{prop}
%\begin{proof}
%\textsf{TODO.}\\
%Lemma: $\pi\in\Sym(\dashpatt)\iff\min(B_1)<\min(B_2)<\dots<\min(b_k)$.\\
%Key fact: $(i,x_i)\in\nub(x)\iff i=\min(B_{x_i})$. ($\nub$ still to be defined)
%\end{proof}

\begin{figure}
$$
\dashpatt \;=\;
\begin{tikzpicture}[scale=0.4, baseline=20pt]
\fill[NE-lines] (1,0) rectangle (2,4);
\draw [semithick] (0,1) -- (4,1);
\draw [semithick] (0,2) -- (4,2);
\draw [semithick] (0,3) -- (4,3);
\draw [semithick] (1,0) -- (1,4);
\draw [semithick] (2,0) -- (2,4);
\draw [semithick] (3,0) -- (3,4);
\filldraw (1,2) circle (6pt);
\filldraw (2,3) circle (6pt);
\filldraw (3,1) circle (6pt);
\end{tikzpicture}
$$
\caption{Vincular pattern such that $\gamma(\RGF)=\Sym(\dashpatt)$.}\label{bivinc_patt_RGF}
\end{figure}

%NEW
Now, let $\pi$ be a permutation. Let $\pi=B_1B_2\cdots B_k$
be the decomposition of $\pi$ into descending runs and let
$\ell(i)=|B_i|$. Denote by $1^{\ell(i)}=1\dots 1$ the
concatenation of $\ell(i)$ copies of $1$.
%%%
We call $\pi\mapsto 1^{\ell(1)}\oplus\cdots\oplus 1^{\ell(k)}$ the
\emph{minimal Burge labeling} of $\pi$ because it is using the least
number of distinct integers. For an alternative way of viewing this,
observe that $\max(\lambda(\pi))\geq \asc(\pi)+1$ for any Burge labeling
$\lambda:\Sym\to\WI$ and that the minimal Burge labeling is the unique
labeling for which equality is obtained.
An overview of the Burge labelings used in this paper can be found in
Table~\ref{table_labelings}. Continuing the %example immediately
%preceding Proposition~\ref{prop_RGF_equiv}
previous example (with $x=112132341$) we have
$x\in\RGF$ and $\gamma(x)\in\Sym(\dashpatt)$. The decomposition of
$\gamma(x)$ into decreasing runs is $\gamma(x)=9421|63|75|8$, where
minima of blocks are in increasing order and
$\sort(x)=1111\oplus 11 \oplus 11 \oplus 1 = 111122334$ is the minimal
Burge labeling of $\gamma(x)$.
%NEW
The following proposition, whose easy proof is omitted,
characterizes the cases where a ballot is encoded by a
restricted growth function.

\begin{prop}\label{prop_RGF_equiv}
Let $x\in\Cay$ and let $(u,\pi)=(\identity,x)^{\!T}$.
Then $x\in\RGF$ if and only if $\pi\in\Sym(\dashpatt)$
and $u$ is the minimal Burge labeling of $\pi$.
\end{prop}

\begin{table}
$$
\begin{array}{c|c|c}
\text{Set}\ E & \text{Labeling}\ \lambda & \Gamma_\lambda(E) \\
\hline
\Sym & \text{Maximal labeling}\ (\iota) & \Sym^{-1} \\
\F & \upsilon & \Modasc \\
\Sym & \tilde{\upsilon} & \xset \\
\Sym(\dashpatt) & \text{Minimal labeling} & \RGF \\
\end{array}
$$
\caption{Burge labelings $\lambda:E\to\WI$ on $E\subseteq\Sym$
and the resulting sets $\Gamma_{\lambda}(E)\subseteq\Cay$.}\label{table_labelings}
\end{table}

An immediate corollary of Proposition~\ref{prop_RGF_equiv} is
that the map $\gamma$ is injective on $\RGF$ and
$$
\gamma(\RGF)=\Sym(\dashpatt).
$$
This gives an alternative perspective on the original proof~\cite{Cl}
that $\Sym(\dashpatt)$ is enumerated by the Bell numbers.
The operation that associates the permutation $\pi=\gamma(x)$
to the set partition $x\in\RGF$ is similar to the \emph{flattening}
of set partitions considered by Callan~\cite{Cal}, with the difference
that blocks are sorted increasingly in their case.
According to our notion, a permutation is the flattening of
a set partition (through $\gamma$) if and only if it avoids $\dashpatt$.

From now on, let $\Flat=\Sym(\dashpatt)$. Since $\gamma$ is
injective on $\RGF$, we can specialize Theorem~\ref{transport_theorem}
by choosing $\RGF$ as a set of representatives for $[\Cay]$ to
get a transport theorem for $\RGF$ and $\Flat$:

\begin{theorem}[Transport of patterns from $\Flat$ to $\RGF$]\label{transport_theorem_RGF}
  For any permutation $\sigma$ and Cayley permutation $y$ we have
  $$
  \Flat(\sigma) = \gamma\bigl(\RGF[\sigma^{-1}]\bigr)\quad\text{and}\quad
  \gamma\bigl(\RGF[y]\bigr) = \Flat\bigl(\gamma(y)\bigr)
  $$
  In other words, the set $\RGF[\sigma^{-1}]$ of restricted growth functions
  avoiding all the patterns in $[\sigma^{-1}]$ is mapped via the bijection
  $\gamma$ to the set $\Flat(\sigma)$ of flattened partitions
  avoiding $\sigma$. In particular, we have
  $|\Flat_n(\sigma)| = |\RGF_n[\sigma^{-1}]|$.
\end{theorem}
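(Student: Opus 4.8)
The plan is to mirror the proof of Theorem~\ref{transport_theorem_modasc_fish} almost verbatim, replacing the pair $(\Modasc,\F)$ with $(\RGF,\Flat)$. The structural fact that legitimizes this substitution is the corollary to Proposition~\ref{prop_RGF_equiv}: the map $\gamma$ restricts to a bijection $\gamma\colon\RGF\to\Flat$. Consequently $\RGF$ contains exactly one representative of each equivalence class $[x]\in[\Cay]$ with $\gamma(x)\in\Flat$, playing precisely the role that $\Modasc$ played for $\F$ via $\psi=\gamma_{|\Modasc}$. I would first establish the second identity $\gamma\bigl(\RGF[y]\bigr)=\Flat\bigl(\gamma(y)\bigr)$ and then deduce the first by specialization.

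Fix a Cayley permutation $y$ and recall that $[y]=\basis(\gamma(y))$ by Lemma~\ref{fishburn_basis}, so that $\RGF[y]$ is the set of restricted growth functions avoiding every pattern in $[y]$. The crux is the equivalence, for $x\in\RGF$,
$$
x \text{ avoids every pattern in } [y] \iff [x]\not\ge[y].
$$
The backward direction is immediate from the definition of containment on $[\Cay]$, since $x\in[x]$. For the forward direction I would argue by contraposition using Lemma~\ref{equivalence_definition}: if $[x]\ge[y]$, then taking $x'=x$ in that lemma yields some $y'\in[y]$ with $x\ge y'$, so $x$ does not avoid $[y]$. With this equivalence in hand, Theorem~\ref{transport_theorem} gives $[x]\ge[y]\iff\gamma(x)\ge\gamma(y)$, whence $x\in\RGF[y]\iff\gamma(x)\in\Flat(\gamma(y))$, using that $\gamma(x)\in\Flat$ automatically for $x\in\RGF$. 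Surjectivity of the correspondence follows since every $\pi\in\Flat(\gamma(y))$ equals $\gamma(x)$ for its unique $\RGF$-preimage $x=\gamma^{-1}(\pi)$, and that $x$ lies in $\RGF[y]$ by the same chain of equivalences. This establishes $\gamma\bigl(\RGF[y]\bigr)=\Flat\bigl(\gamma(y)\bigr)$.

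The first identity then follows by setting $y=\sigma^{-1}$: since $\gamma(\sigma^{-1})=\sigma$ for every permutation $\sigma$ by Remark~\ref{remark_phi_gen_inv}, we obtain $\gamma\bigl(\RGF[\sigma^{-1}]\bigr)=\Flat(\sigma)$, valid for an arbitrary permutation $\sigma$ (not merely those in $\Flat$). Finally, because $\gamma$ is injective and length-preserving on $\RGF$, it maps $\RGF_n[\sigma^{-1}]$ bijectively onto $\Flat_n(\sigma)$, yielding the cardinality statement $|\Flat_n(\sigma)|=|\RGF_n[\sigma^{-1}]|$.

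I expect the main obstacle to be the reduction from ``the whole class $[x]$ avoids $[y]$'' to ``the single $\RGF$ representative $x$ avoids $[y]$'': a priori some other member of $[x]$ could contain a pattern from $[y]$ that the $\RGF$ representative misses. Lemma~\ref{equivalence_definition} is exactly what rules this out, so the proof hinges on invoking it correctly. Everything else is a routine transcription of the $\Modasc$/$\F$ argument, the only genuinely new input being the bijectivity of $\gamma_{|\RGF}$ supplied by Proposition~\ref{prop_RGF_equiv}.
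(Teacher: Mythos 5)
Your proposal is correct and follows essentially the same route as the paper: the paper likewise obtains this theorem by specializing Theorem~\ref{transport_theorem}, using that $\gamma$ restricts to a bijection $\RGF\to\Flat$ (the corollary of Proposition~\ref{prop_RGF_equiv}) so that $\RGF$ serves as a set of representatives, exactly as $\Modasc$ did for $\F$ in Theorem~\ref{transport_theorem_modasc_fish}. Your explicit use of Lemma~\ref{equivalence_definition} to pass from class containment to containment by the chosen representative is precisely the step the paper relies on in its $\Modasc$/$\F$ argument and implicitly reuses here.
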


We end this section by providing two instances of transport
between $\RGF$ and $\Flat$.

\begin{example*}
By Theorem~\ref{transport_theorem_RGF} we have
$$\Flat(2341) = \gamma\bigl(\RGF[2341^{-1}]\bigr) =
\gamma\bigl(\RGF(4123,3123)\bigr).
$$
It is easy to show that $\RGF(4123,3123)=\RGF(3123)$
and $\Flat(2341)=\F(2341)$. Thus,
$$\gamma\bigl(\RGF(3123)\bigr)=\F(2341).
$$
Note that $\RGF(3123)=\RGF(123123)$. Restricted growth
functions avoiding $123123$ are sometimes called $3$-noncrossing
set partitions~\cite{JM}. The enumeration of both
$\RGF(123123)$ and $\F(2341)$ is currently unknown.
\end{example*}

\begin{example*}
By Theorem~\ref{transport_theorem_RGF} we have
$$
\Flat(1342) = \gamma\bigl(\RGF[1342^{-1}]\bigr) =
\gamma\bigl(\RGF(1423,1323)\bigr).
$$
Moreover, $\RGF(1423,1323)=\RGF(1323)$ and
$\Flat(1342)=\F(1342)$. Thus,
$$
\gamma\bigl(\RGF(1323)\bigr)=\F(1342).
$$
The set $\F(1342)$ is enumerated by the binomial transform of
the Catalan numbers~\cite{GW}.
\end{example*}

%%% AKC proofread to here

\section{Picking a representative for each equivalence class}\label{Section_lift_of_psi}

In Theorem~\ref{transport_theorem_modasc_fish} we exploited the map
$\psi: \Modasc \to \F$ and its inverse $\psi^{-1}$ to transport patterns
from $\F$ to $\Modasc$. It seems natural to push this approach further
by ``lifting'' $\psi^{-1}$ to a map whose domain is $\Sym$, the set of
all permutations, thus extending the reach of
Theorem~\ref{transport_theorem_modasc_fish}. In effect, we will define a
map, called $\eta$, that picks a representative for each equivalence
class in $[\Cay]$. The set of representatives will be called $\xset$ and
the lifted map will be $\eta: \Sym\to \xset$. We now detail this
construction.

Remark~\ref{remark_phi_on_modasc} shows that $\psi^{-1}=\Gamma_\upsilon$
is the map induced by the Burge labeling $\upsilon$ of Fishburn
permutations described in Section~\ref{Section_fishburn_perm}. Let $\pi$
be a Fishburn permutation. Recall that $\upsilon(\pi)$ is obtained by
\begin{enumerate}
\item annotating $\pi$ with its active sites with respect to the Fishburn
  pattern $\fishpattern$;
\item writing $k$ above all entries $\pi(j)$ that lie between active sites
  $k$ and $k+1$.
\end{enumerate}
Due to the avoidance of $\fishpattern$, the site between $\pi(i)$
and $\pi(i+1)$ is active if and only if $J(i)<i$, where
$\pi(J(i))=\pi(i)-1$. In addition, the sites before $\pi(1)$ and
after $\pi(j)=1$ are always considered active. From now on, we call
these sites \emph{$\fishpattern$-active}.

We wish to lift the map $\psi^{-1}: \F \to \Modasc$ to a map $\eta$,
with $\Sym$ as its domain, by extending the labeling $\upsilon$ to a
labeling $\tilde{\upsilon}$ on $\Sym$. The lifted map $\eta$ will then
be $\eta=\Gamma_{\tilde{\upsilon}}$.

Let $\pi$ be a permutation. We stipulate that the site between $\pi(i)$ and $\pi(i+1)$
is \emph{$\eta$-active} if $J(i)<i$ or $\pi(i)<\pi(i+1)$.  In addition,
the sites before $\pi(1)$ and after $\pi(j)=1$ are always considered
$\eta$-active.  The labeling $\tilde{\upsilon}:\Sym_n \to \WI_n$ is
defined by
\begin{enumerate}
\item annotating $\pi$ with its $\eta$-active sites;
\item writing $k$ above all entries $\pi(j)$ that lie between
  $\eta$-active sites $k$ and $k+1$.
\end{enumerate}

Now, $\tilde{\upsilon}$ is a Burge labeling
on $\Sym$. Indeed the site between $\pi(i)$ and $\pi(i+1)$ is $\eta$-active
if $\pi(i)<\pi(i+1)$, therefore
$\Des(\tilde{\upsilon}(\pi)) \subseteq \Des(\pi)$. Next we prove that
$\tilde{\upsilon}=\upsilon$ on Fishburn permutations.

\begin{lemma}\label{lemma_lift_upsilon}
  If $\pi$ is a Fishburn permutation, then
  $\upsilon(\pi)=\tilde{\upsilon}(\pi)$.
\end{lemma}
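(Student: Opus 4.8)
The plan is to reduce the claimed equality to the assertion that, for a Fishburn permutation $\pi$, the $\eta$-active sites coincide with the $\fishpattern$-active sites. Both $\upsilon$ and $\tilde{\upsilon}$ are produced by the same two-step recipe---annotate $\pi$ with the relevant active sites, then write $k$ above every entry lying between the $k$th and $(k+1)$th active site---so once the two site-sets agree the two labelings agree entry by entry. I would note at the outset that the site before $\pi(1)$ is always active under both conventions, while the site after the final entry lies to the right of all of $\pi$ and hence separates no two entries; consequently the labels are determined solely by which \emph{internal} sites, those between consecutive entries $\pi(i)$ and $\pi(i+1)$, are active. It therefore suffices to compare the two notions of activity on internal sites.

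Comparing the definitions, the internal site between $\pi(i)$ and $\pi(i+1)$ is $\fishpattern$-active exactly when $J(i)<i$, whereas it is $\eta$-active when $J(i)<i$ or $\pi(i)<\pi(i+1)$. One inclusion is immediate, since $J(i)<i$ makes a site active in both senses. The entire content of the lemma is thus the reverse inclusion: for a Fishburn permutation, an ascent $\pi(i)<\pi(i+1)$ must already force $J(i)<i$.

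The key step, which I expect to be the crux, is to produce a forbidden pattern when this fails. Suppose $\pi(i)<\pi(i+1)$ but $J(i)>i$, and put $c=J(i)$, so that $\pi(c)=\pi(i)-1$. From $\pi(i+1)>\pi(i)>\pi(i)-1=\pi(c)$ we get $c\neq i+1$, and since $c>i$ this yields $c\geq i+2$. I would then check that the entries at positions $i$, $i+1$, $c$ form an occurrence of $\fishpattern=(231,\{1\},\{1\})$: the values obey $\pi(c)<\pi(i)<\pi(i+1)$, giving the classical pattern $231$; the positions $i$ and $i+1$ are adjacent, realizing the constraint $X=\{1\}$; and $\pi(i)=\pi(c)+1$ makes the two smallest values consecutive, realizing $Y=\{1\}$. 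This contradicts $\pi\in\F$. Since $J(i)=i$ is impossible by injectivity and $\pi(i)=1$ already gives $J(i)=0<i$, we conclude $J(i)<i$.

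With both inclusions in hand the internal $\fishpattern$-active and $\eta$-active sites coincide, whence $\upsilon(\pi)=\tilde{\upsilon}(\pi)$. The only genuinely delicate point is matching the mesh-pattern constraints correctly---associating $X=\{1\}$ to the positional adjacency of $i$ and $i+1$, and $Y=\{1\}$ to the value-adjacency $\pi(c)=\pi(i)-1$; the remaining bookkeeping is routine.
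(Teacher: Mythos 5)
Your proposal is correct and follows essentially the same route as the paper's proof: both reduce the lemma to showing that $\fishpattern$-active and $\eta$-active sites coincide, observe that one inclusion is immediate from the definitions, and derive a contradiction in the other direction by exhibiting $\pi(i)\,\pi(i+1)\,\pi(J(i))$ as an occurrence of $\fishpattern$ when $\pi(i)<\pi(i+1)$ but $J(i)>i$. Your write-up is in fact slightly more careful than the paper's, since you explicitly rule out $J(i)=i+1$ and verify both the positional and value adjacency constraints of the bivincular pattern.
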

\begin{proof}
  We will show that each site of a Fishburn permutation $\pi$ is
  $\fishpattern$-active if and only if it is $\eta$-active. The sites
  before $\pi(1)$ and after $\pi(n)$ are both $\fishpattern$-active and
  $\eta$-active by definition. Consider the site between $\pi(i)$ and
  $\pi(i+1)$, for $1 \le i <n$. If the site is $\fishpattern$-active,
  then $J(i)<i$ and thus it is also $\eta$-active. Conversely, suppose
  that the site is $\eta$-active. If $J(i)<i$, then it is also
  $\fishpattern$-active. Otherwise, if $J(i)>i$, we must have
  $\pi(i)<\pi(i+1)$. But then $\pi(i)\pi(i+1)\pi(J(i))$ is an occurrence of
  $\fishpattern$ in $\pi$, which is impossible.
\end{proof}

Since $\tilde{\upsilon}$ is a Burge labeling of $\Sym$ and the
restriction of $\tilde{\upsilon}$ to $\F$ coincides with $\upsilon$, the
map $\eta=\Gamma_{\tilde{\upsilon}}: \Sym \to \Cay$ lifts the map
$\Gamma_{\upsilon}: \F \to \Modasc$. Moreover, since
$\tilde{\upsilon}$ is injective, $\eta$ is also injective, as shown
below Definition~\ref{defin_burge_labeling}. In other words, $\eta$
picks one representative in the equivalence class $[\pi^{-1}]$, for each
permutation $\pi$. If $\pi$ is a Fishburn permutation, $\eta$ chooses the same
element as $\psi^{-1}$. Let
$$\xset = \eta(\Sym).
$$
Note that $\Modasc \subseteq \xset$ by
Lemma~\ref{lemma_lift_upsilon}. In Table~\ref{table_sym_xset} we list permutations
and members of $\xset$ of length one through four. We also indicate which
ones are Fishburn permutations and modified ascent sequences,
respectively.

\begin{table}
  $$
  % \begin{small}
    \begin{array}{ccc}
      \begin{array}[t]{l}
        \\[-14pt]
        \begin{array}{c|c|c}
          \pi\in\Sym & \eta(\pi)\in\xset & \eta(\pi)\in\Modasc\,? \\
          \hline
          1 & 1 & \checkmark
        \end{array}
        \\ \\
        \begin{array}{c|c|c}
          \pi\in\Sym & \eta(\pi)\in\xset & \eta(\pi)\in\Modasc\,? \\
          \hline
          12 & 12 & \checkmark \\
          21 & 11 & \checkmark
        \end{array}
        \\ \\
        \begin{array}{c|c|c}
          \pi\in\Sym & \eta(\pi)\in\xset & \eta(\pi)\in\Modasc\,? \\
          \hline
          123 & 123 & \checkmark \\
          132 & 122 & \checkmark \\
          213 & 112 & \checkmark \\
          231 & 312 & \\
          312 & 121 & \checkmark \\
          321 & 111 & \checkmark
        \end{array}
        \\ \\
        \begin{array}[t]{c|c|c}
          \pi\in\Sym & \eta(\pi)\in\xset & \eta(\pi)\in\Modasc\,? \\
          \hline
          1234 & 1234 & \checkmark \\
          1243 & 1233 & \checkmark \\
          1324 & 1223 & \checkmark \\
          1342 & 1423 & \\
        \end{array}\\
      \end{array}&&
      \begin{array}[t]{l}
      \begin{array}[t]{c|c|c}
        \pi\in\Sym & \eta(\pi)\in\xset & \eta(\pi)\in\Modasc\,? \\
        \hline
        1423 & 1232 & \checkmark \\
        1432 & 1222 & \checkmark \\
        2134 & 1123 & \checkmark \\
        2143 & 1122 & \checkmark \\
        2314 & 3124 & \\
        2341 & 4123 & \\
        2413 & 2132 & \\
        2431 & 3122 & \\
        3124 & 1213 & \checkmark  \\
        3142 & 1312 & \checkmark \\
        3214 & 1112 & \checkmark \\
        3241 & 3112 & \\
        3412 & 3412 & \\
        3421 & 3312 & \\
        4123 & 1231 & \checkmark \\
        4132 & 1221 & \checkmark \\
        4213 & 1121 & \checkmark \\
        4231 & 3121 & \\
        4312 & 1211 & \checkmark \\
        4321 & 1111 & \checkmark \\
      \end{array}
      \end{array}
    \end{array}
  % \end{small}
  $$
  \caption{Permutations and corresponding members of $\xset$}\label{table_sym_xset}
\end{table}

We defined $\eta$ as the function
$\Gamma_{\tilde{\upsilon}}:\Sym \to \Cay$. We then defined its range to
be $X$. From now on we will consider $\eta$ as a function
$\eta: \Sym \to \xset$. It is clearly a bijection\footnote{Is there some
  easier way (than using this bijection) to see that $|\xset_n|=n!$?} and we
have the following transport theorem.

\begin{theorem}[Transport of patterns from $\Sym$ to $\xset$]\label{transport_theorem_xset_sym}
  For any permutation $\sigma$ and Cayley permutation $y$ we have
  $$
    \Sym(\sigma) = \gamma\bigl(\xset[\sigma^{-1}])\bigr)
    \quad\text{and}\quad
    \gamma\bigl(\xset[y]\bigr) = \Sym\bigl(\gamma(y)\bigr).
  $$
  In other words, $\Sym(\sigma)$ is mapped via the bijection $\eta$ to
  $\xset(\basis(\sigma))$.
\end{theorem}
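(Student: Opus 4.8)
The plan is to read this theorem as a direct specialization of the abstract transport theorem (Theorem~\ref{transport_theorem}), now using $\xset$ as a full set of representatives for $[\Cay]$, in exact analogy with the way Theorem~\ref{transport_theorem_modasc_fish} was obtained from $\Modasc$ via $\psi^{-1}$. The structural fact I would record first is that $\xset$ is a transversal of the equivalence classes: each class $[z]\in[\Cay]$ contains exactly one element of $\xset$. This is immediate from the discussion preceding the theorem, since $\eta:\Sym\to\xset$ is a bijection with $\eta(\pi)\in[\pi^{-1}]$; composing with $\gamma$ and using $\gamma(\pi^{-1})=\pi$ (Remark~\ref{remark_phi_gen_inv}) gives $\gamma(\eta(\pi))=\pi$, so that $\gamma|_{\xset}=\eta^{-1}$ is a bijection $\xset\to\Sym$. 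In particular $\gamma$ meets each fiber $\gamma^{-1}(\pi)=[\pi^{-1}]$ exactly once on $\xset$.

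Next I would translate the avoidance condition defining $\xset[y]$ into a statement about the quotient poset. For any $x\in\Cay$, the definition of containment on $[\Cay]$ together with Lemma~\ref{equivalence_definition} shows that $[x]\ge[y]$ holds if and only if the representative $x$ itself contains some pattern $y'\in[y]$. Hence $x$ avoids every pattern of $[y]$ precisely when $[x]\not\ge[y]$, so that $\xset[y]=\{x\in\xset:[x]\not\ge[y]\}$. Because $\xset$ is a transversal and $\gamma(x)=\tilde\gamma([x])$, as $x$ runs over this set the class $[x]$ runs exactly once over all classes avoiding $[y]$; therefore $\gamma(\xset[y])=\tilde\gamma([\Cay][y])=\Sym(\gamma(y))$, the last equality being the transport theorem. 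This proves the second displayed identity.

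For the first identity I would specialize to $y=\sigma^{-1}$: then $\gamma(y)=\gamma(\sigma^{-1})=\sigma$ and $[\sigma^{-1}]=\basis(\sigma)$ by Lemma~\ref{fishburn_basis}, so $\xset[\sigma^{-1}]=\xset(\basis(\sigma))$ and the previous paragraph gives $\gamma(\xset[\sigma^{-1}])=\Sym(\sigma)$. Finally, since $\gamma|_{\xset}$ and $\eta$ are mutually inverse bijections, the equality $\gamma(\xset(\basis(\sigma)))=\Sym(\sigma)$ is equivalent to $\eta(\Sym(\sigma))=\xset(\basis(\sigma))$, which is the ``in other words'' restatement.

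I do not anticipate a serious obstacle, since every ingredient is already available; the only delicate point is bookkeeping around the two roles of $\gamma$ (on $\Cay$ and on the quotient $[\Cay]$) and around the two avoidance notations $\xset[y]$ and $[\Cay][y]$. The load-bearing steps are making the transversal property of $\xset$ explicit and checking that avoidance of the entire class $[y]$ by a single representative matches non-containment in the quotient poset; both reduce to Remark~\ref{remark_phi_gen_inv} and Lemma~\ref{equivalence_definition}.
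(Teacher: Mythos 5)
Your proposal is correct and takes essentially the same route as the paper: the paper states Theorem~\ref{transport_theorem_xset_sym} without a separate proof, treating it as the immediate specialization of Theorem~\ref{transport_theorem} obtained once the preceding discussion establishes that $\eta$ is an injection picking one representative of $[\pi^{-1}]$ for each $\pi\in\Sym$, exactly mirroring the derivation of Theorem~\ref{transport_theorem_modasc_fish}. What you write out explicitly---the transversal property of $\xset$, the translation of class avoidance via Lemma~\ref{equivalence_definition}, and the identification $[\sigma^{-1}]=\basis(\sigma)$ from Lemma~\ref{fishburn_basis}---is precisely the argument the paper leaves implicit.
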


As a direct consequence, $\Sym(\sigma)$ and $\xset(\basis(\sigma))$ are
equinumerous subsets of $\Cay$.

\begin{example*}
  For each natural number $n$, we have
  \begin{align*}
    |\Sym_n(1324)| &= |X_n(1223, 1324)|; \\
    |\Sym_n(4231)| &= |X_n(2121, 3121, 3231, 4231)|.
  \end{align*}
\end{example*}

The rest of this section is devoted to
describing the set $\xset$. Mesh patterns on
Cayley permutations were recently introduced by Cerbai~\cite{Ce}. They
are defined like mesh patterns on permutations, but with additional
regions to account for the possibility of having repeated
elements. Instead of giving a formal definition, we refer the reader to
\cite{Ce} and Figure~\ref{mesh_patterns_X_modasc}. From now on,
let $\mathfrak{a}$, $\mathfrak{b}$, $\mathfrak{c}$ and $\mathfrak{d}$ be
the mesh patterns depicted in Figure~\ref{mesh_patterns_X_modasc}.

\begin{figure}
$$
\mathfrak{a} \,=\,
\begin{tikzpicture}[scale=0.50, baseline=19pt]
\fill[NE-lines] (2.15,0) rectangle (2.85,3);
\draw [semithick] (0,0.85) -- (4,0.85);
\draw [semithick] (0,1.15) -- (4,1.15);
\draw [semithick] (0,1.85) -- (4,1.85);
\draw [semithick] (0,2.15) -- (4,2.15);
\draw [semithick] (0.85,0) -- (0.85,3);
\draw [semithick] (1.15,0) -- (1.15,3);
\draw [semithick] (1.85,0) -- (1.85,3);
\draw [semithick] (2.15,0) -- (2.15,3);
\draw [semithick] (2.85,0) -- (2.85,3);
\draw [semithick] (3.15,0) -- (3.15,3);
\filldraw (1,2) circle (5pt);
\filldraw (2,1) circle (5pt);
\filldraw (3,2) circle (5pt);
%\node[scale=1] at (-0.5,6.5){$\mathfrak{a}:$};
\end{tikzpicture}
\qquad
\mathfrak{b} \,=\,
\begin{tikzpicture}[scale=0.50, baseline=19pt]
\fill[NE-lines] (1.15,0) rectangle (1.85,3);
\fill[NE-lines] (0,0.85) rectangle (0.85,1.15);
\draw [semithick] (0,0.85) -- (3,0.85);
\draw [semithick] (0,1.15) -- (3,1.15);
\draw [semithick] (0,1.85) -- (3,1.85);
\draw [semithick] (0,2.15) -- (3,2.15);
\draw [semithick] (0.85,0) -- (0.85,3);
\draw [semithick] (1.15,0) -- (1.15,3);
\draw [semithick] (1.85,0) -- (1.85,3);
\draw [semithick] (2.15,0) -- (2.15,3);
\filldraw (1,2) circle (5pt);
\filldraw (2,1) circle (5pt);
\end{tikzpicture}
\qquad
\mathfrak{c} \,=\,
\begin{tikzpicture}[scale=0.50, baseline=26pt]
\fill[NE-lines] (0.15,0.85) rectangle (0.85,1.85);
\fill[NE-lines] (1.15,0.85) rectangle (1.85,1.85);
\fill[NE-lines] (2.15,0) rectangle (2.85,4);
\fill[NE-lines] (3.15,1.15) rectangle (4,2.15);
\draw [semithick] (0,0.85) -- (4,0.85);
\draw [semithick] (0,1.15) -- (4,1.15);
\draw [semithick] (0,1.85) -- (4,1.85);
\draw [semithick] (0,2.15) -- (4,2.15);
\draw [semithick] (0,2.85) -- (4,2.85);
\draw [semithick] (0,3.15) -- (4,3.15);
\draw [semithick] (0.85,0) -- (0.85,4);
\draw [semithick] (1.15,0) -- (1.15,4);
\draw [semithick] (1.85,0) -- (1.85,4);
\draw [semithick] (2.15,0) -- (2.15,4);
\draw [semithick] (2.85,0) -- (2.85,4);
\draw [semithick] (3.15,0) -- (3.15,4);
\filldraw (1,2) circle (5pt);
\filldraw (2,3) circle (5pt);
\filldraw (3,1) circle (5pt);
\end{tikzpicture}
\qquad
\mathfrak{d} \,=\,
\begin{tikzpicture}[scale=0.50, baseline=19pt]
\fill[NE-lines] (1.15,0) rectangle (1.85,3);
\fill[NE-lines] (0,0.85) rectangle (0.85,1.85);
\fill[NE-lines] (2.15,1.15) rectangle (3,2.15);
\draw [semithick] (0,0.85) -- (3,0.85);
\draw [semithick] (0,1.15) -- (3,1.15);
\draw [semithick] (0,1.85) -- (3,1.85);
\draw [semithick] (0,2.15) -- (3,2.15);
\draw [semithick] (0.85,0) -- (0.85,3);
\draw [semithick] (1.15,0) -- (1.15,3);
\draw [semithick] (1.85,0) -- (1.85,3);
\draw [semithick] (2.15,0) -- (2.15,3);
\filldraw (1,2) circle (5pt);
\filldraw (2,1) circle (5pt);
%\node[scale=1] at (-0.5,1.5){$\mathfrak{d}:$};
\end{tikzpicture}
$$
\caption{Mesh patterns such that
  $\Modasc=\Cay(\mathfrak{a},\mathfrak{b})$ and
  $\xset=\Cay(\mathfrak{a},\mathfrak{c},\mathfrak{d})$}\label{mesh_patterns_X_modasc}
\end{figure}

Recall from Lemma~\ref{lemma_ascent_tops} that the
ascent tops of a modified ascent sequence $x$ together with the first
element, $x(1)=1$, form a permutation of length $\max(x)$. The converse
is also true. To be precise, let
$$\asctops(x)= \{(1,1)\}\cup \{(i, x(i)): 1 < i \le n,\, x(i-1) < x(i)\}
$$ be
the set of ascent tops and their indices---including the first
element---and let
$$\nub(x) = \{(\min x^{-1}(j), j): 1\leq j\leq \max(x) \}
$$ be the set of
first occurrences and their indices. Then
$$
\Modasc = \{x\in\Cay: \asctops(x) = \nub(x) \},
$$
giving an arguably simpler characterization of modified ascent
sequences than the one given in Section~\ref{Section_asc_seq}.
This can be equivalently expressed in terms of avoidance of
the two mesh patterns $\mathfrak{a}$ and $\mathfrak{b}$.

\begin{theorem}\label{modasc_mesh_patterns}
  We have $\Modasc = \Cay(\mathfrak{a},\mathfrak{b})$, and hence the two
  sets $\lbrace \mathfrak{a},\mathfrak{b} \rbrace$ and
  $\lbrace 11,\fishpattern \rbrace$ are Wilf-equivalent.
\end{theorem}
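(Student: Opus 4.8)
The plan is to build directly on the characterization $\Modasc = \{x\in\Cay : \asctops(x) = \nub(x)\}$ established just above, so that all the real work reduces to translating the set equality $\asctops(x)=\nub(x)$ into avoidance of the two mesh patterns. Since $\asctops(x)=\nub(x)$ holds exactly when the two inclusions $\asctops(x)\subseteq\nub(x)$ and $\nub(x)\subseteq\asctops(x)$ hold simultaneously, I would prove the theorem by matching each inclusion to one pattern: containment of $\mathfrak{a}$ detects precisely the failures of $\asctops(x)\subseteq\nub(x)$, and containment of $\mathfrak{b}$ the failures of $\nub(x)\subseteq\asctops(x)$.

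First I would read off what an occurrence of $\mathfrak{a}$ means. Its classical shape is $212$, and the single shaded column between its last two points forces those two positions to be adjacent. Thus an occurrence of $\mathfrak{a}$ in $x$ is a triple $a<b<b+1$ with $x(a)=x(b+1)>x(b)$; equivalently, position $b+1$ is an ascent top ($x(b)<x(b+1)$) whose value already occurred at $a<b+1$, i.e.\ an ascent top that is \emph{not} a leftmost occurrence. Conversely, since the first entry $x(1)$ is always a leftmost occurrence, any violation of $\asctops(x)\subseteq\nub(x)$ must come from an ascent top $x(i)$ that is not a leftmost occurrence; taking $b=i-1$ (so $x(i-1)<x(i)$) together with any earlier occurrence $a$ of $x(i)$ gives $a<b<b+1$ with $x(a)=x(b+1)>x(b)$, an occurrence of $\mathfrak{a}$. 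Hence $x$ avoids $\mathfrak{a}$ if and only if $\asctops(x)\subseteq\nub(x)$.

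Next I would analyse $\mathfrak{b}$, whose classical shape is $21$. Its shaded column between the two points again forces adjacency, while the shaded cell to the left of, and at the height of, the lower point forbids the value of that lower point from occurring earlier. So an occurrence of $\mathfrak{b}$ is a pair of adjacent positions $i-1,i$ with $x(i-1)>x(i)$ and no copy of the value $x(i)$ before position $i$---that is, a strict descent whose lower entry is a leftmost occurrence. Such an entry is a leftmost occurrence that is neither the first entry nor an ascent top, witnessing a failure of $\nub(x)\subseteq\asctops(x)$. Conversely, any leftmost occurrence $x(i)$ with $i>1$ that is not an ascent top satisfies $x(i-1)\ge x(i)$, and the leftmost-occurrence property forces this inequality to be strict, yielding an occurrence of $\mathfrak{b}$; the same pattern also rules out $x(1)\ne 1$, since if the value $1$ first appeared at a position $p>1$ then $x(p-1)>x(p)=1$ would be an occurrence of $\mathfrak{b}$. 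Therefore $x$ avoids $\mathfrak{b}$ if and only if $\nub(x)\subseteq\asctops(x)$, and combining the two equivalences with the characterization yields $\Modasc=\Cay(\mathfrak{a},\mathfrak{b})$.

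Finally, for the Wilf-equivalence, I would use that a Cayley permutation avoiding $11$ is exactly a permutation, so $\Cay(11,\fishpattern)=\Sym(\fishpattern)=\F$. Since $\psi:\Modasc\to\F$ is a length-preserving bijection, $|\Cay_n(\mathfrak{a},\mathfrak{b})|=|\Modasc_n|=|\F_n|=|\Cay_n(11,\fishpattern)|$ for every $n$, which is precisely Wilf-equivalence of $\{\mathfrak{a},\mathfrak{b}\}$ and $\{11,\fishpattern\}$ over $\Cay$. The main obstacle I anticipate is purely the bookkeeping of the mesh regions for Cayley permutations: one must argue carefully that each shaded region encodes exactly the intended adjacency and first-occurrence constraints---in particular that in $\mathfrak{b}$ a weak descent sitting at a leftmost occurrence is automatically a strict descent, and that forbidding the repeated value only to the left of the first point (rather than the second) already suffices, because the first point carries a strictly larger value.
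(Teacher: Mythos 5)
Your translation between the mesh patterns and the sets $\asctops(x)$, $\nub(x)$ is carefully done (including the two subtleties about $\mathfrak{b}$ that you flag), and your Wilf-equivalence argument matches the paper's. But there is a genuine gap at the very first step: the characterization $\Modasc=\{x\in\Cay:\asctops(x)=\nub(x)\}$ that you treat as ``established just above'' is not in fact established there. What Lemma~\ref{lemma_ascent_tops} proves is only one direction: if $x\in\Modasc$, then ascent tops coincide with leftmost occurrences. The converse---that every Cayley permutation with $\asctops(x)=\nub(x)$ belongs to $\Modasc$---is merely asserted in the paper (``The converse is also true''), and the justification of that assertion \emph{is} the proof of this very theorem. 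So, within the paper's logical structure, citing the displayed characterization to prove the theorem is circular: you have assumed exactly the hard half of the statement. Your proposal proves only that avoidance of $\{\mathfrak{a},\mathfrak{b}\}$ is equivalent to $\asctops(x)=\nub(x)$, which is the easy translation the paper disposes of with the phrase ``can be equivalently expressed''.

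What is missing is an inductive argument showing that a Cayley permutation avoiding $\mathfrak{a}$ and $\mathfrak{b}$ satisfies the recursive definition of $\Modasc$ from Section~\ref{Section_asc_seq}. In the paper this is done by induction on $n$, splitting on the last letter $a=x(n)$: if $x(n-1)\ge a$, avoidance of $\mathfrak{b}$ forces $a$ to occur earlier, so the prefix $x(1)\cdots x(n-1)$ is again a Cayley permutation avoiding both patterns, and the first clause of the recursive definition applies; if $x(n-1)<a$, avoidance of $\mathfrak{a}$ forces $x(n)$ to be the unique occurrence of $a$, one forms $w$ from the prefix by decreasing every entry $c>a$ by one, checks that $w$ is a Cayley permutation avoiding both patterns, and uses $\max(w)=1+\asc(w)$ to verify the bound $a\le 2+\asc(w)$ required by the second clause. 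Some argument of this kind is the mathematical core of the theorem and cannot be outsourced. A minor further point: your claim that avoidance of $\mathfrak{a}$ alone is equivalent to $\asctops(x)\subseteq\nub(x)$ fails under the paper's literal definition of $\asctops$, which always contains the pair $(1,1)$: for $x=21$ the inclusion fails even though $\mathfrak{a}$ is avoided. This does not damage your combined equivalence, since $\mathfrak{b}$ catches $x(1)\neq 1$, but the two patterns should be treated jointly rather than matched one-to-one with the two inclusions.
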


\begin{proof}
  Let $x\in\Cay_n$ be a Cayley permutation. We start by showing
  that if $x$ contains $\mathfrak{a}$ or $\mathfrak{b}$, then $x$ is not
  a modified ascent sequence. Suppose that $x(i) x(j) x(j+1)$ is an
  occurrence of $\mathfrak{a}$ in $x$. Then $x(j+1)$ is an ascent top
  and $x(j+1)=x(i)$ with $i<j+1$. Thus $x\notin\Modasc_n$ by
  Lemma~\ref{lemma_ascent_tops}.  Suppose that $x(i) x(i+1)$ is an
  occurrence of $\mathfrak{b}$ and let $k = x(i+1)$. Then $x(i+1)$ is
  the leftmost occurrence of $k$ in $x$, but $x(i+1)$ is not an ascent
  top. Again, $x\notin\Modasc_n$ by Lemma~\ref{lemma_ascent_tops}.

  Conversely, suppose that $x$ avoids both $\mathfrak{a}$ and
  $\mathfrak{b}$. We shall use the recursive definition of $\Modasc$ to
  prove that $x$ is a modified ascent sequence. Let
  $v=x(1) \cdots x(n-1)$ and let $a = x(n)$. Note that $v$ avoids
  $\mathfrak{a}$ and $\mathfrak{b}$, but $v$ is not necessarily a Cayley
  permutation. We distinguish the following three cases.
  \begin{itemize}
  \item If $x(n-1) > x(n)$, then $x(n)$ is not the leftmost occurrence of
    $a$ in $x$ (since $x$ avoids $\mathfrak{b}$). Thus $v$ is a Cayley
    permutation: it contains all the integers from $1$ to
    $\max(v)=\max(x)$. By the inductive hypothesis, $v$ is a modified
    ascent sequence. Since $x=v a$, with $1 \le a \le x(n-1)$, we have
    that $x$ is also a modified ascent sequence.

  \item If $x(n-1)=x(n)$, then $v$ is again a Cayley permutation and we
    can proceed as in the previous case.

  \item If $x(n-1) < x(n)$, then $x(n)$ must be the only occurrence of
    $a$ in $x$ (since $x$ avoids $\mathfrak{a}$). Because $x$ is a
    Cayley permutation, the string $w$ obtained from $v$ by decreasing
    each entry $c>a$ by one must also be a Cayley permutation (that
    still avoids $\mathfrak{a}$ and $\mathfrak{b}$). By the inductive
    hypothesis, $w$ is a modified ascent sequence and
    $x(n) \le \max(w)+1 = \asc(w)+2$. Therefore $x$ is a modified
    sequence (since $x=\tilde{w}x(n)$ with
    $x(n-1)< x(n) \le \asc(w)+2$).
  \end{itemize}
  Finally, $\Modasc$ and the set of Fishburn permutations,
  $\F=\Cay(11,\fishpattern)$, are equinumerous. Therefore the two sets
  $\lbrace \mathfrak{a},\mathfrak{b} \rbrace$ and
  $\lbrace 11,\fishpattern \rbrace$ are Wilf-equivalent.
\end{proof}

\begin{lemma}\label{lemma_composition_is_sort}
  We have
  $X =\bigl\{ x \in \Cay: (\tilde{\upsilon}\circ\gamma)(x) =
  \sort(x)\bigr\}$.
\end{lemma}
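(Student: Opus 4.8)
The plan is to prove the claimed set equality pointwise: for every $x\in\Cay$ I would establish the equivalence
$$x\in\xset \iff \sort(x)=(\tilde{\upsilon}\circ\gamma)(x),$$
since the right-hand condition is precisely membership in the set on the right of the statement. The first reduction is to replace the condition ``$x\in\xset$'' by the cleaner statement ``$x=\eta(\gamma(x))$''. This replacement is legitimate because $\xset=\eta(\Sym)$: if $x\in\xset$ then $x=\eta(\pi)$ for some $\pi$, and (by the first displayed fact below) $\gamma(x)=\pi$, so $x=\eta(\gamma(x))$; the converse is immediate.

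First I would record two basic facts about $\eta=\Gamma_{\tilde{\upsilon}}$. Since $\sort(\pi)=\identity$ for a permutation $\pi$, the definition of $\Gamma$ characterizes $\eta$ by $(\tilde{\upsilon}(\pi),\pi)^{\!T}=(\identity,\eta(\pi))$, where $(\tilde{\upsilon}(\pi),\pi)$ is a genuine Burge word because $\tilde{\upsilon}$ is a Burge labeling on $\Sym$. Applying $T$ to both sides and using that $T$ is an involution on $\Bur_n$ (Lemma~\ref{lemma_Burge_words}) gives $(\identity,\eta(\pi))^{\!T}=(\tilde{\upsilon}(\pi),\pi)$. Comparing this with the defining relation $(\identity,\eta(\pi))^{\!T}=(\sort(\eta(\pi)),\gamma(\eta(\pi)))$ of $\gamma$ yields simultaneously $\gamma(\eta(\pi))=\pi$ and $\sort(\eta(\pi))=\tilde{\upsilon}(\pi)$. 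In particular $\gamma$ is a left inverse of $\eta$, confirming the reduction above.

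The remaining ingredient is that a Cayley permutation is determined by the pair consisting of its sort and its $\gamma$-image: from $(\identity,x)^{\!T}=(\sort(x),\gamma(x))$ and the involutivity of $T$, one recovers $x$ as the bottom row of $(\sort(x),\gamma(x))^{\!T}$. Hence two Cayley permutations sharing the same $\gamma$-value are equal precisely when they share the same sort. Now both $x$ and $\eta(\gamma(x))$ have $\gamma$-value equal to $\gamma(x)$ (the former trivially, the latter by $\gamma\circ\eta=\identity$), so $x=\eta(\gamma(x))$ if and only if $\sort(x)=\sort(\eta(\gamma(x)))$. Since $\sort(\eta(\gamma(x)))=\tilde{\upsilon}(\gamma(x))$ by the second fact above, this last equality is exactly $\sort(x)=(\tilde{\upsilon}\circ\gamma)(x)$, which closes the chain of equivalences.

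I do not expect a serious combinatorial obstacle: the whole argument is bookkeeping with the defining biword relations of $\gamma$, $\Gamma$ and $\eta$, together with the single structural fact that $T$ is an involution on $\Bur_n$. The one point that must not be glossed over is well-definedness, namely that $(\tilde{\upsilon}(\pi),\pi)\in\Bur_n$ so that $\eta(\pi)=\Gamma(\tilde{\upsilon}(\pi),\pi)$ is defined; this is guaranteed by $\Des(\tilde{\upsilon}(\pi))\subseteq\Des(\pi)$, i.e. by $\tilde{\upsilon}$ being a Burge labeling on $\Sym$, as already verified before Lemma~\ref{lemma_lift_upsilon}. If a slicker phrasing is desired, the two facts $\gamma\circ\eta=\identity$ and $\sort\circ\eta=\tilde{\upsilon}$ can be isolated as a one-line preliminary observation, after which the theorem is read off directly.
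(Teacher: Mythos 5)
Your proof is correct and is essentially the paper's own argument: both rest on transposing the defining relation $(\tilde{\upsilon}(\pi),\pi)^{T}=(\identity,\eta(\pi))$, comparing it with $(\identity,x)^{T}=(\sort(x),\gamma(x))$ via the involutivity of $T$ on Burge words, and noting that the witness $\pi$ is forced to equal $\gamma(x)$. The paper runs this as a single chain of equivalences in which the existential quantifier over $\pi$ collapses at the last step, whereas you first isolate the identities $\gamma\circ\eta=\identity$ and $\sort\circ\eta=\tilde{\upsilon}$ and then invoke injectivity of $x\mapsto(\sort(x),\gamma(x))$ -- a difference of packaging, not of substance.
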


\begin{proof}
  For any Cayley permutation $x$ we have
  $(\identity, x)^T=(\sort(x), \gamma(x))$. For any permutation $\pi$ we
  have
  $(\tilde{\upsilon}(\pi), \pi)^T=(\sort(\pi), \eta(\pi)) = (\identity,
  \eta(\pi))$. Thus
  \begin{align*}
    x\in X
    &\iff x = \eta(\pi)
    &&\text{for some $\pi\in S$}\\
    &\iff (\tilde{\upsilon}(\pi), \pi)^T= (\identity, x)
    &&\text{for some $\pi\in S$}\\
    &\iff (\tilde{\upsilon}(\pi), \pi)= (\sort(x), \gamma(x))
    &&\text{for some $\pi\in S$}\\
    &\iff \tilde{\upsilon}(\gamma(x)) = \sort(x). &&\qedhere
  \end{align*}
\end{proof}

\begin{theorem}\label{xset_mesh_patterns}
  We have $\xset = \Cay(\mathfrak{a},\mathfrak{c},\mathfrak{d})$, and
  hence the set $\lbrace \mathfrak{a},\mathfrak{c},\mathfrak{d} \rbrace$
  is Wilf-equivalent to the pattern $11$.
\end{theorem}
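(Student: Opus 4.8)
The plan is to derive the statement directly from Lemma~\ref{lemma_composition_is_sort}, which already reduces membership in $\xset$ to a single identity: $x\in\xset$ if and only if $\tilde{\upsilon}(\gamma(x))=\sort(x)$. Write $\pi=\gamma(x)$. Both $\sort(x)$ and $\tilde{\upsilon}(\pi)$ are weakly increasing words starting with the value $1$, and each increases by exactly one unit at certain \emph{increment sites} while staying constant elsewhere: for $\sort(x)$ this is clear since it is a sorted Cayley permutation, and for $\tilde{\upsilon}(\pi)$ it is immediate from the definition of the labeling in terms of $\eta$-active sites. Hence the two words coincide if and only if they have the same increment sites. By Lemma~\ref{sort_is_composition} we have $\sort(x)(i)=x(\pi(i))$, so the increment sites of $\sort(x)$ are exactly the sites $i$ of $\pi$ with $x(\pi(i))<x(\pi(i+1))$, i.e.\ the boundaries between consecutive blocks of the ballot encoded by $x$; the increment sites of $\tilde{\upsilon}(\pi)$ are, by definition, the $\eta$-active internal sites of $\pi$. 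Thus $x\in\xset$ if and only if the $\eta$-active internal sites of $\pi$ are precisely the block boundaries, and I would split this into two conditions: (A) every block boundary is $\eta$-active, and (B) no within-block site is $\eta$-active.

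Next I would translate (B) into avoidance of $\mathfrak{a}$. A within-block site of $\pi$ lies between two consecutive elements of a block $B_v$, that is, between two indices $a>b$ of $x$ with $x(a)=x(b)=v$, and it sits immediately to the right of the larger index $a$. Since the site between $\pi(i)$ and $\pi(i+1)$ is $\eta$-active when $J(i)<i$ or $\pi(i)<\pi(i+1)$, and here $\pi(i)=a>b=\pi(i+1)$, such a site is $\eta$-active precisely when the value $a-1$ of $\pi$ lies to its left, i.e.\ when $x(a-1)<v$. Therefore (B) says that no non-leftmost occurrence $a$ of a value $v$ satisfies $x(a-1)<x(a)$; equivalently, every ascent top of $x$ is a first occurrence. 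This is exactly avoidance of $\mathfrak{a}$, the very pattern appearing in Theorem~\ref{modasc_mesh_patterns}, whose forbidden configuration is an ascent top equal to an earlier entry.

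It then remains to translate (A) into avoidance of $\mathfrak{c}$ and $\mathfrak{d}$. The boundary between $B_v$ and $B_{v+1}$ lies between $a=\min B_v$, the leftmost occurrence of $v$, and $c=\max B_{v+1}$, the rightmost occurrence of $v+1$. Unwinding the definition of $\eta$-active as above, and using that $a$ is a leftmost occurrence (so $x(a-1)\neq v$), this site fails to be $\eta$-active exactly when $a\ge 2$, $c<a$ (all copies of $v+1$ precede the first $v$), and $x(a-1)>v$. I would split on the value immediately preceding the first $v$: if $x(a-1)=v+1$ then positions $a-1,a$ form an adjacent descent $(v+1)\,v$ into a first occurrence with no later copy of $v+1$, an occurrence of the two-point pattern $\mathfrak{d}$; if $x(a-1)>v+1$ then the three positions $c<a-1<a$ carry values order-isomorphic to $231$ with $a-1,a$ adjacent, an occurrence of the three-point pattern $\mathfrak{c}$. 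Conversely each occurrence of $\mathfrak{c}$ or $\mathfrak{d}$ yields a non-active block boundary. Hence (A) is equivalent to avoiding both $\mathfrak{c}$ and $\mathfrak{d}$, and combining with (B) gives $\xset=\Cay(\mathfrak{a},\mathfrak{c},\mathfrak{d})$. The Wilf-equivalence with $11$ then follows since $\eta:\Sym\to\xset$ is a bijection, so $|\xset_n|=n!=|\Cay_n(11)|$.

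The main obstacle is precisely this last translation: I must verify that the shaded regions of $\mathfrak{c}$ and $\mathfrak{d}$ in Figure~\ref{mesh_patterns_X_modasc} encode exactly the side conditions isolated above---that the rightmost displayed point is a first occurrence of its value, that the two rightmost points are adjacent, that no copy of $v+1$ occurs to its right, and (for $\mathfrak{d}$ as opposed to $\mathfrak{c}$) that the preceding value is exactly $v+1$ rather than larger---and to confirm that the degenerate cases $a=1$ and $v=\max(x)$ produce neither an occurrence nor a boundary. This is a careful but routine matching of the mesh regions against the combinatorial conditions, after which the equivalence follows mechanically.
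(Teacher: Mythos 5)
Your proposal is correct and follows essentially the same route as the paper: both start from Lemma~\ref{lemma_composition_is_sort} and compare the two weakly increasing labelings $\sort(x)$ and $\tilde{\upsilon}(\gamma(x))$ site by site, matching a discrepancy at a within-block site to an occurrence of $\mathfrak{a}$ and a discrepancy at a block boundary to an occurrence of $\mathfrak{c}$ or $\mathfrak{d}$. Your unified ``increment sites'' formulation is exactly the decomposition the paper employs (explicitly in its sketched converse direction, and in column-tracking form, via the indices $K(\ell)$, in its detailed forward direction), so the two proofs coincide in substance.
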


\begin{proof}
  Let $x\in\Cay_n$ be a Cayley permutation. We start by showing that if
  $x$ contains $\mathfrak{a}$, $\mathfrak{c}$ or $\mathfrak{d}$, then
  $x\notin\xset$. Let $\pi=\gamma(x)$. By
  Lemma~\ref{lemma_composition_is_sort}, it suffices to show that
  $\tilde{\upsilon}(\pi) \neq \sort(x)$. To ease notation, let
  $v = \tilde{\upsilon}(\pi)$.

  Suppose that $x(i) x(j) x(j+1)$ is an occurrence of the pattern
  $\mathfrak{a}$ in $x$. Then $x(i)=x(j+1)>x(j)$ and
  \begin{align*}
  \begin{pmatrix}
    \identity \\
    x
  \end{pmatrix}^{\!T}
    & =
  \begin{pmatrix}
    & \cdots & i & \cdots & j & j+1 & \cdots & \\
    & \cdots & x(i) & \cdots & x(j) & x(j+1) & \cdots & \\
  \end{pmatrix}^{\!T} \\
    &=
  \begin{pmatrix}
    & \cdots & x(j) & \cdots & x(j+1) & \cdots & x(i) & \cdots & \\
    & \cdots & j & \cdots & j+1 & \cdots & i & \cdots & \\
  \end{pmatrix}=
  \begin{pmatrix}
    \sort(x) \\
    \pi
  \end{pmatrix}.
  \end{align*}
  For $\ell\in [n]$, let $K(\ell)$ be the index of the column
  $(x(\ell), \ell)$ in $(\identity,x)^{\!T}$. In particular,
  $\sort(x)(K(\ell))=x(\ell)$. Note that $K(j)<K(j+1)<K(i)$.  In
  particular, the site in $\pi$ immediately after $K(j+1)$ is
  $\eta$-active. Therefore $v(K(i))>v(K(j+1))$, whereas
  $\sort(x)(K(i))=x(i)=x(j+1)=\sort(x)(K(j+1))$, and hence
  $\sort(x) \neq v$.

  Next, suppose that $x(i) x(j) x(j+1)$ is an occurrence of the pattern
  $\mathfrak{c}$ in $x$. Then $x(i)=x(j+1)+1<x(j)$ and
  \begin{align*}
    \begin{pmatrix}
      \identity \\ x
    \end{pmatrix}^{\!T}
    &=
      \begin{pmatrix}
        & \cdots & i & \cdots & j & j+1 & \cdots & \\
        & \cdots & x(i) & \cdots & x(j) & x(j+1) & \cdots & \\
      \end{pmatrix}^{\!T} \\
    &=
      \begin{pmatrix}
        & \cdots & x(j+1) & x(K(t)) & \cdots & x(i) & \cdots & x(j) & \cdots & \\
        & \cdots & j+1 & t & \cdots & i & \cdots & j & \cdots & \\
      \end{pmatrix}=
    \begin{pmatrix}
      \sort(x) \\
      \pi
    \end{pmatrix}.
  \end{align*}
  Note that $K(j+1)<K(i)<K(j)$. Now, if $K(i)=K(j+1)+1$, then $j+1>i$ is
  a descent in $\pi$ and, since $K(j)>K(j+1)$, the site in $\pi$
  immediately after $K(j+1)$ is not $\eta$-active. Therefore
  $v(K(i))=v(K(j+1))$, whereas
  $\sort(x)(K(i))=x(i)>x(j+1)=\sort(x)(K(j+1))$, and hence
  $\sort(x) \neq v$. Otherwise, consider the column $(x(t), t)$
  immediately after the column $(x(j+1), j+1)$ in $(\sort(x),\pi)$. In
  other words, suppose that $K(t)=K(j+1)+1$. Since $x(i)=x(j+1)+1$,
  either $x(t)=x(j+1)$ or $x(t)=x(j+1)+1$. Suppose that
  $x(t)=x(j+1)$. We shall prove by contradiction that
  $v(K(t)) \neq v(K(j+1))$, and thus $\sort(x) \neq v$. If
  $v(K(t))=v(K(j+1))$, then the site between $K(j+1)$ and $K(t)$ in
  $\pi$ is not $\eta$-active. Therefore $j+1>t$ is a descent. But then
  $x(t)=x(j+1)$ would precede $x(j+1)$ in $x$, which contradicts
  $x(i) x(j) x(j+1)$ being an occurrence of $\mathfrak{c}$ (since $x(t)$
  would be placed in a forbidden region). Finally, suppose that
  $x(t)=x(j+1)+1$. We wish to show that $v(K(t))=v(K(j+1))$. By
  contradiction, suppose that $v(K(j+1))<v(K(t))$; that is, the site
  between $K(j+1)$ and $K(t)$ is $\eta$-active. Since $K(j)>K(j+1)$, we
  have that $j+1<t$ is an ascent. But then $x(t)=x(j+1)+1 \le x(i)$,
  which contradicts $x(i)x(j)x(j+1)$ being an occurrence of
  $\mathfrak{c}$ (again $x(t)$ would be placed in a forbidden region).

  The pattern $\mathfrak{d}$ can be treated similarly, so we leave it to
  the reader.

  Conversely, suppose that $x$ avoids $\mathfrak{a}$, $\mathfrak{b}$ and
  $\mathfrak{c}$.  Let $\pi=\gamma(x)$. We wish to prove that
  $x=\eta(\pi)$ or, equivalently, $\sort(x)=\tilde{\upsilon}(\pi)$. Due
  to the great amount of technical details, we just sketch the proof. To
  prove the contrapositive statement, suppose that
  $\sort(x) \neq \tilde{\upsilon}(\pi)$. There are two possibilities for
  $\sort(x)$ to be different from the $\eta$-labeling of $\pi$. Either
  $\sort(x)$ labels two consecutive elements $\pi(i)$ with $k$ and
  $\pi(i+1)$ with $k+1$, but $i$ is not $\eta$-active. Or $\sort(x)$
  labels $\pi(i)$ and $\pi(i+1)$ with the same integer $k$, but the site
  $i$ is $\eta$-active. In the first case, $J(i)>i$ and
  $\pi(i)>\pi(i+1)$. If $J(i)=i+1$, then the labels $k$ of $\pi(i)$ and
  $k+1$ of $\pi(i+1)$ necessarily result in an occurrence of
  $\mathfrak{d}$ in $x$. Similarly, if $J(i)>i+1$, then the labels of
  $\pi(i)$, $\pi(i+1)$ and $\pi(J(i))$ result in an occurrence of
  $\mathfrak{c}$ in $x$. Analogously, if $\pi(i)$ and $\pi(i+1)$ are
  labeled with the same integer $k$, but the site $i$ is $\eta$-active,
  then it is possible to show that $x$ contains an occurrence of
  $\mathfrak{a}$. This completes the proof.
\end{proof}

Theorems~\ref{modasc_mesh_patterns} and \ref{xset_mesh_patterns}
characterize $\Modasc$ and $\xset$ as pattern avoiding Cayley
permutations. As a result, we can interpret the transports of patterns
described in Theorems~\ref{transport_theorem_modasc_fish} and
\ref{transport_theorem_xset_sym} as Wilf-equivalences.

\begin{corollary}
  Let $\sigma$ be a permutation.
  \begin{enumerate}
  \item The two sets $\lbrace 11,\fishpattern,\sigma \rbrace$ and
    $\lbrace \mathfrak{a},\mathfrak{b} \rbrace \cup\basis(\sigma)$ are
    Wilf-equivalent. That is,
    $$|\Cay_n(11,\fishpattern,\sigma)| \ = \ |\Cay_n(\mathfrak{a},\mathfrak{b},\basis(\sigma))|.
    $$
  \item The two sets $\lbrace 11,\sigma \rbrace$ and
    $\lbrace \mathfrak{a},\mathfrak{c},\mathfrak{d} \rbrace
    \cup\basis(\sigma)$ are Wilf-equivalent. That is,
    $$|\Cay_n(11,\sigma)| \ = \ |\Cay_n(\mathfrak{a},\mathfrak{c},\mathfrak{d},\basis(\sigma))|.
    $$
  \end{enumerate}
\end{corollary}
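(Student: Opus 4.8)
The plan is to reduce each of the two Wilf-equivalences to an enumerative equality already recorded for the corresponding transport theorem, by rewriting every avoidance set over $\Cay$ as an intersection of avoidance sets and then identifying the factors with $\F$, $\Modasc$, $\Sym$ and $\xset$. Throughout I use the elementary fact that avoiding a union of pattern sets is the same as lying in the intersection of the individual avoidance sets, i.e.\ $\Cay(P\cup Q)=\Cay(P)\cap\Cay(Q)$, which is immediate from the definition of avoiding a set of (possibly mixed classical, bivincular and mesh) patterns.

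For the first item I would begin by unpacking the left-hand side. Since $\Sym=\Cay(11)$ and $\F=\Sym(\fishpattern)=\Cay(11,\fishpattern)$, avoidance of the three patterns $11$, $\fishpattern$ and $\sigma$ over $\Cay$ is the intersection $\Cay(11,\fishpattern)\cap\Cay(\sigma)=\F(\sigma)$; here $\sigma$ is a classical pattern and the members of $\F$ are permutations, so avoidance of $\sigma$ over $\Cay$ restricts to ordinary avoidance on $\F$. For the right-hand side I would invoke Theorem~\ref{modasc_mesh_patterns}, which gives $\Cay(\mathfrak{a},\mathfrak{b})=\Modasc$, and hence $\Cay(\mathfrak{a},\mathfrak{b},\basis(\sigma))=\Modasc\cap\Cay(\basis(\sigma))=\Modasc(\basis(\sigma))$. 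The two sides are then linked by the corollary to Theorem~\ref{transport_theorem_modasc_fish}, namely $|\F_n(\sigma)|=|\Modasc_n(\basis(\sigma))|$; chaining the three equalities length by length yields $|\Cay_n(11,\fishpattern,\sigma)|=|\Cay_n(\mathfrak{a},\mathfrak{b},\basis(\sigma))|$, which is the claimed Wilf-equivalence.

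The second item is entirely parallel. The left-hand side simplifies to $\Cay(11,\sigma)=\Sym(\sigma)$ because $\Cay(11)=\Sym$. For the right-hand side I would use Theorem~\ref{xset_mesh_patterns}, that is $\Cay(\mathfrak{a},\mathfrak{c},\mathfrak{d})=\xset$, to obtain $\Cay(\mathfrak{a},\mathfrak{c},\mathfrak{d},\basis(\sigma))=\xset\cap\Cay(\basis(\sigma))=\xset(\basis(\sigma))$. The equinumerosity of $\Sym(\sigma)$ and $\xset(\basis(\sigma))$, recorded as a direct consequence of Theorem~\ref{transport_theorem_xset_sym}, then gives $|\Cay_n(11,\sigma)|=|\Cay_n(\mathfrak{a},\mathfrak{c},\mathfrak{d},\basis(\sigma))|$.

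There is no real obstacle here: once the mesh-pattern characterizations of $\Modasc$ and $\xset$ from Theorems~\ref{modasc_mesh_patterns} and~\ref{xset_mesh_patterns} are available, the corollary is essentially a dictionary translation of the enumerative consequences of the transport theorems. The only point deserving a word of care is the bookkeeping that lets one intersect a mesh-pattern class (living in $\Cay$) with a classical-pattern class, and the observation that avoidance of $\basis(\sigma)$ among the members of $\Modasc$ (respectively $\xset$) coincides with avoidance of $\basis(\sigma)$ over $\Cay$ restricted to that set; both follow at once since $\Modasc,\xset\subseteq\Cay$.
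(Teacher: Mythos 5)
Your proposal is correct and follows exactly the route the paper intends: the corollary is stated without a separate proof precisely because it is the dictionary translation you describe, combining $\F=\Cay(11,\fishpattern)$ and $\Sym=\Cay(11)$ with Theorems~\ref{modasc_mesh_patterns} and~\ref{xset_mesh_patterns} ($\Modasc=\Cay(\mathfrak{a},\mathfrak{b})$, $\xset=\Cay(\mathfrak{a},\mathfrak{c},\mathfrak{d})$) and then citing the enumerative consequences of Theorems~\ref{transport_theorem_modasc_fish} and~\ref{transport_theorem_xset_sym}. Your added care about intersecting mesh-pattern classes with classical-pattern classes, and about avoidance restricting correctly to subsets of $\Cay$, matches the paper's implicit reasoning.
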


The transport between $\RGF$ and $\Sym(\dashpatt)$, that is
Theorem~\ref{transport_theorem_RGF}, can be interpreted as a
Wilf-equivalence on Cayley permutations as well.

\begin{lemma}
We have $\RGF=\Cay(\rgfpatt)$, where $\rgfpatt$ is the
Cayley-mesh pattern depicted in Figure~\ref{mesh_patterns_Sym0}.
\end{lemma}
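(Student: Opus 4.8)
The plan is to argue exactly as in the proof of Theorem~\ref{modasc_mesh_patterns}, characterizing membership in $\RGF$ by the absence of a single local obstruction and then matching that obstruction with an occurrence of the Cayley-mesh pattern $\rgfpatt$. First I would record the defining reformulation of $\RGF$: a Cayley permutation $x\in\Cay_n$ is a restricted growth function if and only if the leftmost occurrences of the values $1,2,\dots,\max(x)$ appear from left to right in increasing order of value; equivalently, reading $x$ from left to right, the running maximum starts at $x(1)=1$ and increases by exactly one each time a new value is introduced. This is immediate from the defining inequality $x(i+1)\le\max\{x(1),\dots,x(i)\}+1$ together with $x(1)=1$ (and is the content underlying Proposition~\ref{prop_RGF_equiv}). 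The pattern $\rgfpatt$ of Figure~\ref{mesh_patterns_Sym0} is precisely the negation of this condition: its shaded regions force the lower of the two points, $x(j)$, to be the \emph{leftmost} occurrence of its value (no cell equal in value to $x(j)$ lies to its left), so that an occurrence of $\rgfpatt$ is a pair of positions $i<j$ with $x(i)>x(j)$ and $x(j)$ a first occurrence. In words: a value appears strictly to the left of the first occurrence of a strictly smaller value.

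For the inclusion $\RGF\subseteq\Cay(\rgfpatt)$ I argue by contraposition. Suppose $x$ contains an occurrence of $\rgfpatt$, realized by positions $i<j$ with $b:=x(i)>x(j)=:a$ and $a$ a leftmost occurrence. Since $a$ first occurs at $j>i$, the value $a$ is absent from the prefix $x(1)\cdots x(i)$, whereas this prefix already attains $\max\{x(1),\dots,x(i)\}\ge b>a$. Thus the prefix reaches a maximum exceeding $a$ while skipping the value $a$, so its running maximum cannot have climbed in unit steps, and $x\notin\RGF$.

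For the reverse inclusion $\Cay(\rgfpatt)\subseteq\RGF$ I again use the contrapositive. If $x\notin\RGF$, let $j$ be the first position at which the growth condition fails, so $x(j)\ge 2+\max\{x(1),\dots,x(j-1)\}$. Then $x(j)$ is a new running maximum, hence a leftmost occurrence, while the value $x(j)-1$ exceeds the previous maximum and so has not yet appeared. Being a Cayley permutation, $x$ contains $x(j)-1$ at a later position, and I take $j'>j$ to be its leftmost occurrence. The positions $j<j'$ satisfy $x(j)>x(j)-1=x(j')$ with $x(j')$ a leftmost occurrence, so $x(j)\cdots x(j')$ realizes an occurrence of $\rgfpatt$ once one checks that no element equal to $x(j')$ sits in the forbidden cells to its left—which holds exactly because $j'$ is leftmost (and, should the figure also demand it, $x(j)$ is leftmost as a new maximum).

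The main obstacle is bookkeeping rather than conceptual: I must verify that the specific shaded regions drawn in Figure~\ref{mesh_patterns_Sym0} coincide with the condition ``$x(j)$ is a leftmost occurrence and some strictly larger value occurs earlier,'' including the correct treatment of equalities in the row of the lower point—a subtlety particular to Cayley-mesh patterns, already encountered for $\mathfrak{a}$ and $\mathfrak{b}$ in Theorem~\ref{modasc_mesh_patterns}—together with the degenerate boundary case in which the offending earlier value sits in the first position, i.e.\ $x(1)\neq 1$. Once the shading is read off correctly, both inclusions reduce to the prefix-gap observation above.
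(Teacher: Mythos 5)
Your proof is correct and takes essentially the same route as the paper's: the paper simply cites the well-known reformulation of $\RGF$ (every occurrence of an integer $k\ge 2$ is preceded by occurrences of all of $1,\dots,k-1$) and notes that its negation is precisely an occurrence of $\rgfpatt$, while your running-maximum/leftmost-occurrence characterization is equivalent and your two contrapositive inclusions merely fill in the details the paper leaves to the reader. One small caution: the shaded cell in the figure forbids values equal to $x(j)$ only to the left of position $i$ (not to the left of position $j$), so your reading of the pattern is literally stronger occurrence-by-occurrence; however, the two conditions define the same avoidance class (given an occurrence in the figure's sense, replace $j$ by the leftmost occurrence of the value $x(j)$), so your argument goes through after this one-line bridging observation, which you correctly flagged as remaining bookkeeping.
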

\begin{proof}
A well known equivalent description of $\RGF$ is the following.
A Cayley permutation $x$ is a restricted growth function if and only
if every occurrence of each integer $k\ge 2$ in $x$ is preceded by
at least one occurrence of each of the integers $1,2,\dots,k-1$.
This property can in turn be expressed by the avoidance of
the Cayley-mesh pattern $\rgfpatt$.
\end{proof}

\begin{corollary}
Let $\sigma$ be a permutation. Then the two sets
$\lbrace 11,\dashpatt,\sigma\rbrace$ and
$\lbrace \rgfpatt\rbrace\cup[\sigma^{-1}]$ are Wilf-equivalent.
That is,
$$
|\Cay_n(11,\dashpatt,\sigma)| = |\Cay_n(\rgfpatt,[\sigma^{-1}])|.
$$
\end{corollary}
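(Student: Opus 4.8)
The plan is to reduce both sides of the claimed identity to quantities already computed in Theorem~\ref{transport_theorem_RGF}, so that the corollary becomes a matter of unwinding notation rather than new combinatorics. The two pattern sets to be compared are $\lbrace 11,\dashpatt,\sigma\rbrace$ and $\lbrace\rgfpatt\rbrace\cup[\sigma^{-1}]$, and the point is that avoiding $11$ (respectively $\rgfpatt$) cuts $\Cay$ down to $\Sym$ (respectively $\RGF$).

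First I would rewrite the left-hand set. Since $\Cay(11)=\Sym$, avoiding $11$ forces a Cayley permutation to be an honest permutation, and on permutations the avoidance of $\dashpatt$ and of $\sigma$ is the classical notion, so
$$
\Cay_n(11,\dashpatt,\sigma)=\Sym_n(\dashpatt,\sigma)=\Sym_n(\dashpatt)\cap\Sym_n(\sigma).
$$
Recalling that $\Flat=\Sym(\dashpatt)$, the right-hand side is exactly $\Flat_n(\sigma)$. It is worth noting that $\Flat_n(\sigma)$ is unambiguous here: because $\Flat\subseteq\Sym\subseteq\Cay$, avoidance of the permutation pattern $\sigma$ agrees whether it is read in $\Sym$ or in $\Cay$; the containment $\Sym(\dashpatt)\subseteq\F$ recorded earlier is what legitimizes treating $\Flat(\sigma)$ as a genuine permutation class.

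Next I would rewrite the right-hand set using the preceding lemma, $\RGF=\Cay(\rgfpatt)$. Intersecting with the avoidance of every pattern in $[\sigma^{-1}]$ gives
$$
\Cay_n(\rgfpatt,[\sigma^{-1}])=\RGF_n\cap\Cay_n([\sigma^{-1}])=\RGF_n[\sigma^{-1}],
$$
the set of restricted growth functions avoiding every pattern in $[\sigma^{-1}]$. With both sides thus identified, the corollary is immediate from the final clause of Theorem~\ref{transport_theorem_RGF}, which asserts precisely $|\Flat_n(\sigma)|=|\RGF_n[\sigma^{-1}]|$. Chaining the two equalities yields $|\Cay_n(11,\dashpatt,\sigma)|=|\Cay_n(\rgfpatt,[\sigma^{-1}])|$, the desired Wilf-equivalence over $\Cay$.

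I do not expect a genuine obstacle: all the combinatorial content is supplied by Theorem~\ref{transport_theorem_RGF}. The only step requiring care is the bookkeeping of the two characterizations $\Sym=\Cay(11)$ and $\RGF=\Cay(\rgfpatt)$ and the interpretation of $[\sigma^{-1}]$ as a set of Cayley-permutation patterns to be avoided, so that $\Cay_n(\rgfpatt,[\sigma^{-1}])$ coincides with the object $\RGF_n[\sigma^{-1}]$ appearing in the transport theorem.
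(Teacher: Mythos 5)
Your proposal is correct and matches the paper's intended argument exactly: the corollary follows by identifying $\Cay_n(11,\dashpatt,\sigma)$ with $\Flat_n(\sigma)$ via $\Cay(11)=\Sym$, identifying $\Cay_n(\rgfpatt,[\sigma^{-1}])$ with $\RGF_n[\sigma^{-1}]$ via the preceding lemma $\RGF=\Cay(\rgfpatt)$, and invoking the equality $|\Flat_n(\sigma)|=|\RGF_n[\sigma^{-1}]|$ from Theorem~\ref{transport_theorem_RGF}. The only cosmetic remark is that your aside about $\Sym(\dashpatt)\subseteq\F$ is not actually needed to justify reading $\sigma$-avoidance in $\Sym$ versus $\Cay$; that compatibility holds simply because classical pattern containment on $\Cay$ restricts to the usual one on $\Sym$.
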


\subsection{Permutations with no $\eta$-inactive sites}\label{Section_S0}

Let $\Sym^0$ denote the set of permutations with no $\eta$-inactive
sites.  Note that if $\pi \in \Sym^0$, then
$\tilde{\upsilon}(\pi)= \identity$, and so $\eta(\pi)$ contains no
repeated letters. Indeed, $\eta(\pi) = \pi^{-1}$.  Thus
$\eta(\Sym^0)= (\Sym^0)^{-1}=\xset \cap \Sym$. When restricting to
permutations we can considerably simplify the mesh patterns
$\mathfrak{a}$, $\mathfrak{c}$ and $\mathfrak{d}$ that characterize
$\xset$: since the underlying pattern of $\mathfrak{a}$ is not a
permutation we can remove it; the pattern $\mathfrak{c}$ is
equivalent to the bivincular pattern
$\alpha=(231,\lbrace 2 \rbrace,\lbrace 1 \rbrace)$; and the pattern
$\mathfrak{d}$ is equivalent to the bivincular pattern
$\beta=(21,\lbrace 1 \rbrace,\lbrace 1 \rbrace)$. Thus
$$\eta(\Sym^0) = \Sym(\alpha,\beta).
$$
The patterns $\alpha$ and $\beta$ are depicted in
Figure~\ref{mesh_patterns_Sym0}.

\begin{figure}
  $$
  \rgfpatt \,=\,
  \begin{tikzpicture}[scale=0.50, baseline=19pt]
%	\fill[NE-lines] (0,1.85) rectangle (0.85,2.15);
	\fill[NE-lines] (0,0.85) rectangle (0.85,1.15);
	\draw [semithick] (0,0.85) -- (3,0.85);
	\draw [semithick] (0,1.15) -- (3,1.15);
	\draw [semithick] (0,1.85) -- (3,1.85);
	\draw [semithick] (0,2.15) -- (3,2.15);
	\draw [semithick] (0.85,0) -- (0.85,3);
	\draw [semithick] (1.15,0) -- (1.15,3);
	\draw [semithick] (1.85,0) -- (1.85,3);
	\draw [semithick] (2.15,0) -- (2.15,3);
	\filldraw (1,2) circle (5pt);
	\filldraw (2,1) circle (5pt);
  \end{tikzpicture}
	\qquad
  \alpha \;=\;
  \begin{tikzpicture}[scale=0.4, baseline=20pt]
    \fill[NE-lines] (0,1) rectangle (4,2);
    \fill[NE-lines] (2,0) rectangle (3,4);
    \draw [semithick] (0,1) -- (4,1);
    \draw [semithick] (0,2) -- (4,2);
    \draw [semithick] (0,3) -- (4,3);
    \draw [semithick] (1,0) -- (1,4);
    \draw [semithick] (2,0) -- (2,4);
    \draw [semithick] (3,0) -- (3,4);
    \filldraw (1,2) circle (6pt);
    \filldraw (2,3) circle (6pt);
    \filldraw (3,1) circle (6pt);
  \end{tikzpicture}
  \qquad
  \beta \;=\;
  \begin{tikzpicture}[scale=0.4, baseline=15pt]
    \fill[NE-lines] (1,0) rectangle (2,3);
    \fill[NE-lines] (0,1) rectangle (3,2);
    \draw [semithick] (0,1) -- (3,1);
    \draw [semithick] (0,2) -- (3,2);
    \draw [semithick] (1,0) -- (1,3);
    \draw [semithick] (2,0) -- (2,3);
    \filldraw (1,2) circle (6pt);
    \filldraw (2,1) circle (6pt);
  \end{tikzpicture}
  \qquad
  \alpha^r \;=\;
  \begin{tikzpicture}[scale=0.4, baseline=20pt]
    \fill[NE-lines] (0,1) rectangle (4,2);
    \fill[NE-lines] (1,0) rectangle (2,4);
    \draw [semithick] (0,1) -- (4,1);
    \draw [semithick] (0,2) -- (4,2);
    \draw [semithick] (0,3) -- (4,3);
    \draw [semithick] (1,0) -- (1,4);
    \draw [semithick] (2,0) -- (2,4);
    \draw [semithick] (3,0) -- (3,4);
    \filldraw (1,1) circle (6pt);
    \filldraw (2,3) circle (6pt);
    \filldraw (3,2) circle (6pt);
  \end{tikzpicture}
  $$
  \caption{
  Mesh pattern such that $\RGF=\Cay(\rgfpatt)$ and
  bivincular patterns such that $\eta(\Sym^0)=\Sym(\alpha,\beta)$}\label{mesh_patterns_Sym0}
\end{figure}

We wish to construct a bijection between $\eta(\Sym^0)$ and the set of
ascent sequences with no flat steps (consecutive equal entries). An
ascent sequence with no flat steps is said to be
\emph{primitive}. Primitive ascent sequences were enumerated by
Dukes~\etal~\cite{DKRS}. Dukes and Parviainen~\cite{DP} proved that
primitive ascent sequences are in bijection with binary upper triangular
matrices with non-negative entries such that all rows and columns
contain at least one nonzero entry. The pattern $\alpha$ is closely
related to the Fishburn pattern $\fishpattern$. Let
$\alpha^r=(132,\lbrace 1 \rbrace,\lbrace 1 \rbrace)$, the reverse of
$\alpha$ (see Figure~\ref{mesh_patterns_Sym0}). Recall from
Section~\ref{Section_fishburn_perm} the step-wise procedure that
associates each Fishburn permutation $\pi$ with an ascent sequence
through the construction of $\pi$ from $1$ by inserting a new maximum, at
each step, and recording its position.  Parviainen~\cite{P} observed
that an alternative description of $\Ascseq$ can be obtained by
performing the same construction on $\Sym(\alpha^r)$ instead of
$\F$. The avoidance of $\alpha^r$ gives rise to an analogous notion of
$\alpha^r$-active site and the resulting bijection
$\psi': \Modasc\to\Sym(\alpha^r)$ can be computed using the Burge
transpose by replacing $\fishpattern$-active sites with
$\alpha^r$-active sites.

\begin{lemma}\label{lemma_flat_step}
  Let $x$ be a modified ascent sequence and let $\pi=\psi'(x)$. Then
  $\pi$ contains an occurrence of $\beta^r$ if and only if $x$ contains a
  flat step.
\end{lemma}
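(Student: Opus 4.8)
The plan is to reduce the statement to its $\fishpattern$-analogue by reversal, and then settle that analogue directly from the Burge-transpose description of $\psi$ together with Lemma~\ref{lemma_ascent_tops}.

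First I would record that the $\alpha^r$-construction is nothing but the mirror image of the $\fishpattern$-construction. Reversal sends the bivincular pattern $\fishpattern=(231,\{1\},\{1\})$ to $\alpha^r=(132,\{1\},\{1\})$, and it carries the $\fishpattern$-active sites of a Fishburn permutation to the $\alpha^r$-active sites of its reverse; since both bijections are driven by the same modified ascent sequence, this yields $\psi'(x)=\psi(x)^r$. Because a permutation $\pi$ contains $\beta^r$ if and only if $\pi^r$ contains $\beta$, with $\beta=(21,\{1\},\{1\})$, it then suffices to prove the $\fishpattern$-version: a modified ascent sequence $x$ contains a flat step if and only if $\psi(x)$ contains $\beta$.

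For this I would use the identity $(\identity,x)^T=(\sort(x),\psi(x))$, whose columns are the pairs $\binom{x(i)}{i}$ sorted by top entry ascending, ties broken by bottom entry descending. For the forward direction, a flat step $x(j)=x(j+1)$ produces two columns with equal top entry and bottom entries $j$ and $j+1$; as these are consecutive integers, the descending tie-break places $j+1$ immediately to the left of $j$ inside their common top-group, so $\psi(x)$ exhibits the adjacent entries $(j+1)\,j$, that is, an occurrence of $\beta$.

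The converse is the step I expect to be the crux. An occurrence of $\beta$ in $\psi(x)$ is a pair of adjacent entries equal to $i+1$ and $i$, whose corresponding biword columns $\binom{x(i+1)}{i+1}$ and $\binom{x(i)}{i}$ are then adjacent; this forces $x(i+1)\le x(i)$. If the inequality were strict, the left column would be the last of its top-group, meaning that $i+1$ is the first occurrence of the value $x(i+1)$ in $x$; Lemma~\ref{lemma_ascent_tops} would then make $i+1$ an ascent top, giving $x(i)<x(i+1)$, a contradiction (and $x(i+1)=1$ is impossible, since $1$ first occurs in position $1$). Hence $x(i+1)=x(i)$ and $x$ has a flat step. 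The two places demanding care are precisely this converse, where Lemma~\ref{lemma_ascent_tops} is indispensable, and the reversal identification $\psi'=r\circ\psi$, which rests on matching $\fishpattern$-active and $\alpha^r$-active sites under reverse.
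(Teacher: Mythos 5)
Your reduction to the $\fishpattern$-version rests on the claim that reversal sends $\fishpattern=(231,\lbrace 1\rbrace,\lbrace 1\rbrace)$ to $\alpha^r=(132,\lbrace 1\rbrace,\lbrace 1\rbrace)$, and that claim is false. Reversing a bivincular pattern reverses the underlying classical pattern and preserves the value-adjacency set, but it \emph{reflects} the position-adjacency set: the requirement that the first two entries be adjacent becomes the requirement that the last two entries be adjacent. Hence $\fishpattern^r=(132,\lbrace 2\rbrace,\lbrace 1\rbrace)$, whereas $\alpha^r=(132,\lbrace 1\rbrace,\lbrace 1\rbrace)$ is the reverse of $\alpha=(231,\lbrace 2\rbrace,\lbrace 1\rbrace)$. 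The two avoidance classes genuinely differ: $3142$ avoids $\fishpattern$, so $3142\in\F$, yet its reverse $2413$ contains $\alpha^r$ through the occurrence $2,4,3$ (the adjacent pair $2,4$ followed later by $3=2+1$). Consequently $\F^r\neq\Sym(\alpha^r)$, so $r\circ\psi$ does not even map into $\Sym(\alpha^r)$, the codomain of $\psi'$, and the identity $\psi'=r\circ\psi$ cannot hold; likewise the $\fishpattern$-active sites of $\pi$ mirror to the $\fishpattern^r$-active sites of $\pi^r$, not to its $\alpha^r$-active sites. With that identity gone, what your argument establishes is the statement ``$x$ has a flat step if and only if $\psi(x)$ contains $\beta$,'' which is true but is not Lemma~\ref{lemma_flat_step}: nothing in it addresses $\psi'$ or $\beta^r$.

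The two transpose computations you give are themselves sound, and the second one is a nice observation: the forward direction via the descending tie-break (this is in fact exactly how the paper argues the flat-step-to-adjacency implication), and the converse via Lemma~\ref{lemma_ascent_tops}, ruling out $x(i+1)<x(i)$ because the column of $i+1$ would then be last in its top-group, making $i+1$ a leftmost occurrence and hence an ascent top. But the paper treats the $\beta^r$ direction by working with $\alpha^r$ directly rather than transporting it from $\fishpattern$: if $\pi(i+1)=\pi(i)+1$, then the site between $\pi(i)$ and $\pi(i+1)$ is not $\alpha^r$-active, since inserting a new maximum there would create the occurrence $\pi(i),\,n+1,\,\pi(i+1)$ of $\alpha^r$; hence the labels of $\pi(i)$ and $\pi(i+1)$ coincide and $x(\pi(i))\,x(\pi(i)+1)$ is a flat step. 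To repair your proof you would need to replace the reversal shortcut by this kind of direct analysis of $\alpha^r$-active sites (or otherwise prove a correct relation between $\psi'$ and $\psi$); since $\fishpattern$ and $\alpha^r$ are not related by reversal, the symmetry argument as stated cannot be patched.
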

\begin{proof}
  Suppose that $\pi(i) \pi(i+1)$ is an occurrence
  of $\beta^r$ in $\pi$, or, equivalently, that $\pi(i+1)=\pi(i)+1$. Note that
  the site between $\pi(i)$ and $\pi(i+1)$ is not $\alpha^r$-active, since inserting a
  new maximum $n+1$ in this position would create an occurrence
  $\pi(i),n+1,\pi(i+1)$ of $\alpha^r$. Therefore the labels of $\pi(i)$ and
  $\pi(i+1)$ are equal. Since $\pi(i+1)=\pi(i)+1$, this results in a flat step
  $x(\pi(i))\,x(\pi(i+1))$ in $x$.  Conversely, suppose that $x(i)x(i+1)$ is
  a flat step in $x$. Then, by definition of Burge transpose, the
  elements $i+1$ and $i$ are in consecutive positions in $\pi$, and $i+1$
  precedes $i$. Thus $i+1,i$ is an occurrence of $\beta$, as desired.
\end{proof}

As a consequence of the proof of Lemma~\ref{lemma_flat_step}, $\psi'$ is
a bijection between the set of modified ascent sequences with no flat
steps and $\Sym(\alpha^r,\beta^r)$. Moreover, $\pi\mapsto\pi^r$ is a
bijection between $\Sym(\alpha^r,\beta^r)$ and
$\Sym(\alpha,\beta)=\eta(\Sym^0)$. Finally, since flat steps are
preserved when mapping a modified ascent sequence to its corresponding
ascent sequence, we obtain by composition the desired bijection between
$\eta(\Sym^0)$ and the set of primitive ascent sequences.
We close this section by stating this as a theorem.

\begin{theorem}
  There is a one-to-one correspondence between permutations with no
  $\eta$-inactive sites and the set of primitive ascent sequences.
\end{theorem}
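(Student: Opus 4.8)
The plan is to prove the statement by assembling a chain of bijections, almost all of which have just been put in place. Since $\eta\colon\Sym\to\xset$ is a bijection, the set $\Sym^0$ of permutations with no $\eta$-inactive sites is in bijection with its image $\eta(\Sym^0)$, and we have already identified $\eta(\Sym^0)=\Sym(\alpha,\beta)$. Hence it suffices to exhibit a bijection between $\Sym(\alpha,\beta)$ and the set of primitive ascent sequences, after which composition with $\eta$ finishes the argument.

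First I would pass from $\Sym(\alpha,\beta)$ to $\Sym(\alpha^r,\beta^r)$ through the reverse map $\pi\mapsto\pi^r$. This is an involution on $\Sym$ that carries an occurrence of a bivincular pattern to an occurrence of its reverse; since $\alpha^r$ and $\beta^r$ are by definition the reverses of $\alpha$ and $\beta$, it restricts to a bijection between the two avoidance classes. Next I would invoke Parviainen's variant $\psi'\colon\Modasc\to\Sym(\alpha^r)$: by Lemma~\ref{lemma_flat_step}, $\psi'(x)$ contains $\beta^r$ precisely when $x$ contains a flat step, so $\psi'$ restricts to a bijection between the modified ascent sequences with no flat steps and $\Sym(\alpha^r,\beta^r)$. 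Finally I would transport this across the bijection $x\mapsto\hat{x}$ from $\Ascseq$ to $\Modasc$; because this map preserves flat steps, it restricts to a bijection between the primitive ascent sequences and the flat-step-free modified ascent sequences. Composing $\eta$, the reverse map, $(\psi')^{-1}$, and the inverse of $x\mapsto\hat{x}$ then yields the desired one-to-one correspondence.

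Most of the content is already carried by Lemma~\ref{lemma_flat_step}, so the remaining effort is bookkeeping: checking that each restriction is well defined, that is, that it lands exactly in the claimed avoidance class. The two places I would treat most carefully are the facts cited but not proved in full here, namely Parviainen's observation that $\psi'$ maps $\Modasc$ onto all of $\Sym(\alpha^r)$ (obtained by running the insertion procedure of Section~\ref{Section_fishburn_perm} against $\alpha^r$-active rather than $\fishpattern$-active sites), and the claim that $x\mapsto\hat{x}$ preserves flat steps exactly, and not merely the set of weak descents. For the latter I would argue directly from the definition of the modification $M$: consecutive entries that are equal receive the same increment and hence stay equal, whereas consecutive entries that differ cannot be brought into coincidence, so $M$ neither creates nor destroys a flat step. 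I expect no genuine obstacle beyond these verifications, since the chain of bijections is forced once the two facts are in hand.
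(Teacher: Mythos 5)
Your proposal is correct and is essentially the paper's own argument: the paper likewise composes the flat-step-preserving bijection $x\mapsto\hat{x}$, the restriction of $\psi'$ given by Lemma~7.7 (flat-step-free modified ascent sequences $\leftrightarrow$ $\Sym(\alpha^r,\beta^r)$), the reverse map onto $\Sym(\alpha,\beta)=\eta(\Sym^0)$, and the injectivity of $\eta$. Your extra care about the two cited-but-unproved facts (Parviainen's observation and exact preservation of flat steps under the modification $M$) is sound and, if anything, slightly more explicit than the paper.
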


\section{Future directions}\label{Section_future_works}

In this paper we have laid the theoretical foundations for the
development of a theory of transport of patterns from Fishburn
permutations to ascent sequences, and more generally between $\Sym$ and
$[\Cay]$, leaving most applications for future work. Given a
set of pattern avoiding Fishburn permutations, we have provided a
construction for the basis of the corresponding set of
modified ascent sequences. Using the bijection $\Modasc \to \Ascseq$, this
result can be interpreted in terms of (plain) ascent
sequences. Nevertheless, a more direct construction for
a basis would be of interest.

\begin{openproblem}
  Given a permutation $\sigma$, determine a set of Cayley permutations
  $\cbasis(\sigma)$ such that
  $$
  \fun(\Ascseq(\cbasis(\sigma)))=\F(\sigma).
  $$
\end{openproblem}

To find analogous sets in the other direction also remains an open problem.

\begin{openproblem}
  Given a Cayley permutation $x$, determine a set $\basis'(x)$ such that
  $$\psi(\Modasc(x)) = \F(\basis'(x)),$$
  and a set $\cbasis'(x)$ such that
  $$\varphi(\Ascseq(x)) = \F(\cbasis'(x)).$$
\end{openproblem}

Understanding how the avoidance of a pattern on ascent
sequences affects the corresponding set of modified ascent sequences, and
vice versa, seems to be necessary if we want to answer these
questions. In other words, we would like to describe the set of
sequences obtained by modifying $\Ascseq(x)$ in terms of avoidance of
patterns, as well as the set obtained by applying the inverse
construction to $\Modasc(x)$.

Our work suggests that a natural setting for the
transport of patterns is the set of Cayley permutations. Indeed, we
showed how a transport theorem often can be regarded as an example of
Wilf-equivalence over Cayley permutations. On the other hand, not all the ascent sequences
are Cayley permutations. This raises at least two more questions. First,
is there an analogue of the Burge transpose that allows us to
incorporate (plain) ascent sequences in the same framework? Secondly,
what natural superset $Y$ do ascent sequences
belong to? Ideally, since we would like to transport patterns between
$\Sym$ and $Y$, the set $Y$ should be equinumerous
with the set of permutations. A reasonable guess could then be
the set of inversion sequences, which properly contains $\Ascseq$, but
this remains to be investigated.


\begin{thebibliography}{99}

\bibitem{AlUh} P.\ Alexandersson and J.\ Uhlin, \textit{Cyclic sieving,
    skew Macdonald polynomials and Schur positivity}, arXiv e-prints
  arXiv:1908.00083, 2019.

\bibitem{BS} E.\ Babson and E.\ Steingr\'imsson,\textit{Generalized
    permutation patterns and a classification of the Mahonian
    statistics}, S\'eminaire Lotharingien de Combinatoire, Vol.\ 44,
  pp.\ 18, 2000.

\bibitem{BP} A.\ Baxter and L.\ Pudwell, \textit{Ascent sequences avoiding
    pairs of patterns}, Electronic Journal of Combinatorics, vol.\ 22,
  pp.\ 23, 2015.
  
\bibitem{Be} D.\ Bevan, \textit{Permutation patterns: basic definitions
    and notations}, arXiv e-prints arXiv:1506.06673, 2015.

\bibitem{BMCDK} M.\ Bousquet-M\'elou, A.\ Claesson, M.\ Dukes and S.\
  Kitaev, \textit{(2+2)-free posets, ascent sequences and pattern
    avoiding permutations}, Journal of Combinatorial Theory, Series A,
  Vol.\ 117, pp.\ 884--909, 2010.

\bibitem{BC} P.\ Br\"and\'en and A.\ Claesson, \textit{Mesh Patterns and
    the expansion of permutation statistics as sums of permutation
    patterns}, The Electronic Journal of Combinatorics, Vol.\ 18, pp.\ 14,
  2011.

\bibitem{Bu} W.\ H.\ Burge, \textit{Four correspondences between graphs
    and generalized Young tableaux}, Journal of Combinatorial Theory,
  Series A, Vol.\ 17, pp.\ 12--30, 1974.%
  
\bibitem{Cal} D.\ Callan, \textit{Pattern avoidance in “flattened”
	partitions}, Discrete Mathematics, Vol.\ 309(12), pp. 4187--4191, 2009.

\bibitem{CMS} D.\ Callan, T.\ Mansour and M.\ Shattuck, \textit{Restricted
    ascent sequences and Catalan numbers}, Applicable Analysis and
  Discrete Mathematics, Vol.\ 8, pp.\ 288--303, 2014.
  
\bibitem{CDDGGPS} L.\ Campbell, S.\ Dahlberg, R.\ Dorward, J.\ Gerhard,
	T.\ Grubb, C.\ Purcell and B.\ Sagan, \textit{Restricted
	growth function patterns and statistics}, Advances in Applied 
	Mathematics, Vol.\ 100, pp. 1--42, 2018. 

\bibitem{Ca} A.\ Cayley, \textit{On the analytical forms called trees},
  Collected Mathematical Papers, Vol. 4, Cambridge University Press,
  pp. 112--115, 1891.

\bibitem{Ce} G.\ Cerbai, \textit{Sorting Cayley permutations with
    pattern-avoiding machines}, Australasian Journal of Combinatorics,
    Vol.\ 80(3), 2021.
    
\bibitem{CCF} G.\ Cerbai, A.\ Claesson and L.\ Ferrari, \textit{Stack
	sorting with restricted stacks}, Journal of Combinatorial Theory,
	Series A, Vol.\ 173, 2020.

\bibitem{Cl} A.\ Claesson, \textit{Generalized pattern avoidance},
  European Journal of Combinatorics, Vol.\ 22, pp.\ 961--971, 2001.

\bibitem{DKRS} M.\ Dukes, S.\ Kitaev, J.\ Remmel and E.\ Steingrimsson,
  \textit{Enumerating (2+2)-free posets by indistinguishable elements},
  Journal of Combinatorics, Vol.\ 2, pp.\ 139--163, 2011.
  
\bibitem{DP} M.\ Dukes and R.\ Parviainen \textit{Ascent Sequences and
    Upper Triangular Matrices Containing Non-Negative Integers},
  Electronic Journal of Combinatorics, Vol.\ 17, pp.\ 16, 2010.

\bibitem{DS} P.\ Duncan and E.\ Steingr\'imsson, \textit{Pattern avoidance
    in ascent sequences}, Electronic Journal of Combinatorics, Vol.\ 18,
  pp.\ 17, 2011.

\bibitem{GW} J.\ B.\ Gil and M.\ D.\ Weiner, \textit{On pattern-avoiding
    Fishburn permutations}, Annals of Combinatorics, Vol.\ 23,
  pp.\ 785--800, 2019.

\bibitem{JM} V.\ Jel\'inek and T.\ Mansour, \textit{On Pattern-Avoiding
	Partitions}, The Electronic Journal of Combinatoric, Vol.\ 15(\#R39),
	2008.

\bibitem{MS14} T.\ Mansour and M.\ Shattuck, \textit{Some enumerative
    results related to ascent sequences}, Discrete Mathematics,
  Vol.\ 315-316, pp.\ 29--41, 2014.

\bibitem{MS15} T.\ Mansour and M.\ Shattuck, \textit{Ascent Sequences and
    Fibonacci Numbers}, Filomat, Vol.\ 29, pp.\ 703--712, 2015.

\bibitem{MF84} M.\ Mor and A.\ S.\ Fraenkel, \textit{Cayley permutations},
  Discrete mathematics, Vol.\ 48(1), pp.\ 101--112, 1984.

\bibitem{P} R.\ Parviainen, \textit{Wilf classification of bi-vincular
    patterns}, arXiv:0910.5103, 2009.
    
\bibitem{Sa} B.\ Sagan, \textit{Pattern Avoidance in Set Partitions},
	Ars Combininatoria, Vol.\ 94, pp.\ 79--96, 2010.

\bibitem{Sl} N.\ J.\ A.\ Sloane, \textit{The on-line encyclopedia of
    integer sequences}, at oeis.org.

\bibitem{St} R.\ P.\ Stanley, \textit{Enumerative combinatorics},
  Vol.\ I, The Wadsworth \& Brooks/Cole Mathematics Series, Wadsworth \&
  Brooks, 1986.

\bibitem{Za} D.\ Zagier \textit{Vassiliev invariants and a strange
    identity related to the Dedekind eta-function}, Topology, Vol.\ 40,
  pp.\ 945--960, 2001.

\end{thebibliography}
\end{document}